\numberwithin{equation}{section}
\numberwithin{figure}{section}
\theoremstyle{plain}
\newtheorem{thm}{\protect\theoremname}
\theoremstyle{plain}
\newtheorem{lem}[thm]{\protect\lemmaname}
\theoremstyle{remark}
\newtheorem{rem}[thm]{\protect\remarkname}
\theoremstyle{plain}
\newtheorem{prop}[thm]{\protect\propositionname}
\providecommand{\lemmaname}{Lemma}
\providecommand{\propositionname}{Proposition}
\providecommand{\remarkname}{Remark}
\providecommand{\theoremname}{Theorem}
\begin{document}

\title{Conservative, pressure-equilibrium-preserving discontinuous Galerkin
method for compressible, multicomponent flows}

\author[NRL]{Eric J. Ching}
\author[NRL]{Ryan F. Johnson}
\author[NRL]{Andrew D. Kercher}

\address[NRL]{Laboratories for Computational Physics and Fluid Dynamics, U.S. Naval Research Laboratory, 4555 Overlook Ave SW, Washington, DC 20375}

\begin{abstract}
This paper concerns preservation of velocity and pressure equilibria
in smooth, compressible, multicomponent flows in the inviscid limit.
First, we derive the velocity-equilibrium and pressure-equilibrium
conditions of a standard discontinuous Galerkin method that discretizes
the conservative form of the compressible, multicomponent Euler equations.
We show that under certain constraints on the numerical flux, the
scheme is velocity-equilibrium-preserving. However, standard discontinuous
Galerkin schemes are not pressure-equilibrium-preserving. Therefore,
we introduce a discontinuous Galerkin method that discretizes the
pressure-evolution equation in place of the total-energy conservation
equation. Semidiscrete conservation of total energy, which would otherwise
be lost, is restored via the correction terms of Abgrall~\citep{Abg18}
and Abgrall et al.~\citep{Abg22}. Since the addition of the correction
terms prevents exact preservation of pressure and velocity equilibria,
we propose modifications that then lead to a velocity-equilibrium-preserving,
pressure-equilibrium-preserving, and (semidiscretely) energy-conservative
discontinuous Galerkin scheme, although there are certain tradeoffs.
Additional extensions are also introduced. We apply the developed
scheme to smooth, interfacial flows involving mixtures of thermally
perfect gases initially in pressure and velocity equilibria to demonstrate
its performance in one, two, and three spatial dimensions. 
\end{abstract}
\begin{keyword}
Discontinuous Galerkin method; Multicomponent flow; Pressure equilibrium;
Velocity equilibrium; Energy conservation; Spurious pressure oscillations
\end{keyword}
\maketitle
\global\long\def\middlebar{\,\middle|\,}%
\global\long\def\average#1{\left\{  \!\!\left\{  #1\right\}  \!\!\right\}  }%
\global\long\def\expnumber#1#2{{#1}\mathrm{e}{#2}}%
 \newcommand*{\horzbar}{\rule[.5ex]{2.5ex}{0.5pt}}

\global\long\def\revisionmath#1{\textcolor{red}{#1}}%

\global\long\def\revisionmathtwo#1{\textcolor{blue}{#1}}%

\makeatletter \def\ps@pprintTitle{  \let\@oddhead\@empty  \let\@evenhead\@empty  \def\@oddfoot{\centerline{\thepage}}  \let\@evenfoot\@oddfoot} \makeatother

\let\svthefootnote\thefootnote\let\thefootnote\relax\footnotetext{\\ \hspace*{65pt}DISTRIBUTION STATEMENT A. Approved for public release. Distribution is unlimited.}\addtocounter{footnote}{-1}\let\thefootnote\svthefootnote

\section{Introduction\label{sec:Introduction}}

Compressible, multicomponent flows are relevant to many engineering
applications and physical phenomena, including propulsion, atmospheric
entry, and pollutant formation. However, robust and accurate simulation
of such flows remains challenging. For instance, a well-known, long-standing
issue is the failure of conventional conservative numerical schemes
to preserve pressure equilibrium at fluid interfaces in the inviscid
limit, leading to spurious pressure oscillations that can then cause
noticeable errors and even solver divergence. Note that pressure-equilibrium
preservation corresponds to the ability to maintain uniform pressure
under initially constant velocity and pressure in inviscid flows,
and velocity equilibrium is typically implicitly assumed when referring
to pressure equilibrium. These spurious pressure oscillations arise
when the specific heat ratio of the mixture is no longer constant,
which can occur even in the case of a monocomponent thermally perfect
gas or a mixture of calorically perfect gases. Although the addition
of numerical dissipation via, for instance, artificial viscosity or
limiting can mitigate these oscillations and help prevent solver divergence,
these stabilization techniques do not guarantee pressure-equilibrium
preservation and will still often generate errors in pressure~\citep{Chi23_short}.
In addition, such techniques can introduce excessive dissipation (to
the detriment of accuracy) in complex flow problems. To address this
difficulty, many quasi-conservative schemes (i.e., schemes that sacrifice
discrete conservation of mass and/or total energy) have been developed.
One example is the double-flux method~\citep{Abg01}, in which the
thermodynamics are locally frozen to mimic the calorically perfect,
single-species setting. Alternatively, the pressure-evolution equation
can be solved in place of the total-energy equation~\citep{Kar92,Kar94,Ter12,Kaw15}.
These two quasi-conservative approaches mathematically guarantee preservation
of pressure equilibrium at the cost of total-energy conservation.
However, loss of conservation is often considered to be unsatisfactory,
especially at shocks. Hybrid strategies that switch between pressure-equilibrium-preserving
(but quasi-conservative) and conservative (but non-pressure-equilibrium-preserving)
schemes have been proposed~\citep{Kar96,Fed02,Lv15,Boy21,Gab24}.

To prevent loss of conservation, recent efforts in the computational
fluid dynamics (CFD) community have focused on conservative schemes
that mitigate spurious pressure oscillations without relying on artificial
viscosity or limiting. Some of these approaches belong to the family
of numerical schemes known as discontinuous Galerkin (DG) methods~\citep{Ree73,Bas97_2,Bas97,Coc98,Coc00},
which have gained considerable attention due to high-order accuracy,
geometric flexibility, amenability to modern computing systems, and
other advantages~\citep{Wan13}. A number of studies~\citep{Joh20,Ban23,Chi23_short}
have found that the type of integration (i.e., colocation or overintegration)
can have a noticeable effect on the magnitude of spurious pressure
oscillations. In addition, projecting the pressure onto the finite
element test space prior to evaluating the flux (akin to reconstruction
of the primitive variables in finite-volume schemes~\citep{Boy21,Joh06})
can attenuate the pressure oscillations~\citep{Fra16,Joh20,Ban23,Chi23_short}.
 Note that these methods do not mathematically guarantee preservation
of pressure equilibrium. In contrast, Fujiwara et al.~\citep{Fuj23}
recently introduced a fully conservative finite volume scheme for
the compressible, multicomponent Euler equations that provably maintains
pressure equilibrium. The key ingredient was the derivation of a discrete
pressure-equilibrium condition that informed the development of a
provably pressure-equilibrium-preserving numerical flux, albeit with
the caveat that calorically perfect gas mixtures are assumed (i.e.,
each species has a constant specific heat ratio). Terashima et al.~\citep{Ter24}
then extended this approach to real fluids modeled with a cubic equation
of state, although the more complicated relationship between pressure
and internal energy results in an \emph{approximately }pressure-equilibrium-preserving
numerical flux, where second-order spatial errors with respect to
exact pressure equilibrium were reported. To the best of our knowledge,
even in the simpler (but still complex) case of mixtures of thermally
perfect gases (i.e., each species has a temperature-dependent specific
heat ratio), which are considered in this work, conservative schemes
that mathematically preserve pressure equilibrium have remained elusive.
The primary objective of the present study is to address this gap.

We first extend some of the analysis in~\citep{Fuj23} and show that
a standard DG discretization of the conservative form of the multicomponent
Euler equations is velocity-equilibrium-preserving but not pressure-equilibrium-preserving.
We then introduce a pressure-based DG method (i.e., total energy is
replaced with pressure as a state variable) in which semidiscrete
total-energy conservation is restored via the conservative, high-order,
elementwise correction terms of Abgrall~\citep{Abg18} and Abgrall
et al.~\citep{Abg22}. These correction terms enable semidiscrete
satisfaction of auxiliary transport equations (here, the total-energy
equation) in an integral sense. Note that it may seem more natural
to retain total energy as a state variable and design the correction
terms to additionally satisfy the pressure-evolution equation. However,
pressure equilibrium is essentially a pointwise condition, whereas
the correction terms only guarantee integral satisfaction of the auxiliary
transport equation(s). Entropy-conservative/entropy-stable DG schemes
have been constructed using these correction terms (or similar forms)
without relying on SBP operators or entropy-conservative/entropy-stable
numerical fluxes~\citep{Abg18,Che20,Gab23,Man24,Alb24}. The correction
terms have also been employed to enforce conservation of total energy
while treating entropy density as a state variable~\citep{Abg23}.
Here, the difficulties associated with devising a provably pressure-equilibrium-preserving
numerical flux for non-calorically-perfect gases can be circumvented
using the correction terms, although a distinct set of challenges
is also introduced. For instance, although a standard pressure-based
DG method (i.e., without the correction terms) can exactly maintain
pressure and velocity equilibria, the addition of the correction terms
results in loss of this property. Furthermore, for a given element,
the correction terms are only valid if the state inside the element
is non-uniform, thus ignoring the case of elementwise-constant solutions
with inter-element jumps. The correction terms also fail to preserve
zero species concentrations, which can be especially detrimental if
chemical reactions are considered. We develop modifications to the
correction terms that enable exact preservation of pressure equilibrium,
velocity equilibrium, and zero species concentrations (while maintaining
semidiscrete total-energy conservation) in multicomponent flows, although
it should be noted that there are certain tradeoffs. In addition,
we propose combining the elementwise correction terms with face-based
corrections of the form presented in~\citep{Abg23} in order to account
for elementwise-constant solutions with inter-element jumps. Detailed
comparisons of the correction terms with and without the modifications
are performed.  In this study, we target smooth, interfacial flows
initially in pressure and velocity equilibria, as in~\citep{Fuj23}
and~\citep{Ter24}, and mixtures of thermally perfect gases. No artificial
viscosity or limiting is applied in the considered test cases. The
eventual goal is to account for flows with or without discontinuities
involving real-fluid mixtures, which are even more susceptible to
large-scale spurious pressure oscillations and solver divergence due
to the additional thermodynamic nonlinearities. This will be the subject
of future work.

Note that the proposed formulation is similar in spirit to a recent
ADER-DG method developed by Gaburro et al.~\citep{Gab24}, which
also treats pressure as a state variable and incorporates total-energy-based
corrections. However, there are some key differences. First, the total-energy
corrections in~\citep{Gab24} are of a subcell finite-volume type,
distinct from the correction terms employed in this study. In addition,
the ADER-DG method in~\citep{Gab24} only applies the corrections
at shocks, which are detected using a shock sensor, such that total
energy is not conserved everywhere in the domain. It should be noted
that total-energy conservation is most critical at shocks~\citep{Kar92,Kar94,Cas08_2,Abg10}
and may not need to be satisfied elsewhere to obtain accurate results.
Finally, the subcell finite-volume-type corrections, if applied to
a constant-pressure fluid interface, may not preserve pressure equilibrium.
Regardless, the method in~\citep{Gab24} was used to obtain very
encouraging results in discontinuous multi-material flows involving
calorically perfect gases. Here, we specifically focus on recovering
total-energy conservation everywhere in the domain; a detailed investigation
of whether total-energy conservation can be sacrificed away from shocks,
especially in practical problems, is outside the scope of the current
study but may be pursued in the future. 

The remainder of this paper is organized as follows. Section~\ref{sec:energy-based-formulation}
briefly summarizes the conservative form of the governing equations
and the corresponding DG discretization. The pressure-based formulation,
correction terms, and proposed modifications are introduced in the
following section. Results for a variety of smooth, interfacial flow
problems initially in pressure and velocity equilibria are given in
Section~\ref{sec:results}, wherein curved elements are also considered.
The paper concludes with some broader discussion and final remarks.

\section{Compressible, multicomponent Euler equations: Total-energy-based
formulation}

\label{sec:energy-based-formulation}

Before discussing the pressure-based formulation, we begin with the
conservative form of the compressible, multicomponent Euler equations:
\begin{equation}
\partial_{t}\bm{y}+\nabla\cdot\bm{\mathcal{F}}\left(\bm{y}\right)=0,\label{eq:conservation-law-strong-form}
\end{equation}
where $t$ is the time, $\bm{y}$ is the vector of $m$ state variables,
and $\bm{\mathcal{F}}$ is the convective flux. The physical coordinates
are denoted by $x=(x_{1},\ldots,x_{d})$, where $d$ is the number
of spatial dimensions. The state vector is expanded as
\begin{equation}
\bm{y}=\left(\rho v_{1},\ldots,\rho v_{d},\rho e_{t},C_{1},\ldots,C_{n_{s}}\right)^{T},\label{eq:reacting-navier-stokes-state}
\end{equation}
where $\rho$ is the density, $\bm{v}=\left(v_{1},\ldots,v_{d}\right)$
is the velocity vector, $e_{t}$ is the specific total energy, and
$C_{i}$ is the molar concentration of the $i$th (out of $n_{s}$)
species. Note that the first $d$ components of $\bm{y}$ correspond
to momentum, the $\left(d+1\right)$th component corresponds to energy,
and the last $n_{s}$ components correspond to species concentrations.
The density is computed from the species concentrations as
\[
\rho=\sum_{i=1}^{n_{s}}\rho_{i}=\sum_{i=1}^{n_{s}}W_{i}C_{i},
\]
where $\rho_{i}$ and $W_{i}$ are the partial density and molar mass,
respectively, of the $i$th species. Throughout this work, we assume
$\rho>0$ (i.e., no vacuum). The mass and mole fractions of the $i$th
species are given by
\[
Y_{i}=\frac{\rho_{i}}{\rho},\quad X_{i}=\frac{C_{i}}{\sum_{i=1}^{n_{s}}C_{i}},
\]
and the specific total energy is expanded as 
\[
e_{t}=u+\frac{1}{2}\sum_{k=1}^{d}v_{k}v_{k},
\]
where $u=\sum_{i=1}^{n_{s}}Y_{i}u_{i}$ is the mixture-averaged specific
internal energy. Under the assumption of thermally perfect gases,
$u_{i}$ is defined as~\citep{Gio99}
\[
u_{i}=h_{i}-R_{i}T=h_{\mathrm{ref},i}+\int_{T_{\mathrm{ref}}}^{T}c_{p,i}(\tau)d\tau-R_{i}T,
\]
where $h_{i}$ is the mass-specific enthalpy of the $i$th species,
$R_{i}=R^{0}/W_{i}$ (with $R^{0}$ denoting the universal gas constant),
$T$ is the temperature, $T_{\mathrm{ref}}=298.15\:\mathrm{K}$ is
the reference temperature, $h_{\mathrm{ref},i}$ is the reference-state
species formation enthalpy, and $c_{p,i}$ is the mass-specific heat
capacity at constant pressure of the $i$th species. $c_{p,i}$ is
computed from an $n_{p}$-order polynomial as
\begin{equation}
c_{p,i}=\sum_{k=0}^{n_{p}}a_{ik}T^{k},\label{eq:specific_heat_polynomial}
\end{equation}
based on the NASA thermodynamic fits~\citep{Mcb93,Mcb02}.

The $k$th spatial component of the convective flux is given by
\begin{equation}
\bm{\mathcal{F}}_{k}\left(y\right)=\left(\rho v_{k}v_{1}+P\delta_{k1},\ldots,\rho v_{k}v_{d}+P\delta_{kd},v_{k}\left(\rho e_{t}+P\right),v_{k}C_{1},\ldots,v_{k}C_{n_{s}}\right)^{T},\label{eq:reacting-navier-stokes-spatial-convective-flux-component-1}
\end{equation}
where $P$ is the pressure, which is computed from the ideal-gas law:
\begin{equation}
P=R^{0}T\sum_{i}C_{i}=\rho RT.\label{eq:eos}
\end{equation}
$R=R^{0}/\overline{W}$ is the specific gas constant, where $\overline{W}=\rho/\sum_{i}^{n_{s}}C_{i}$
is the molar mass of the mixture.

\subsection{Discontinuous Galerkin discretization\label{sec:DG-discretization}}

Let $\Omega$ denote the computational domain partitioned by $\mathcal{T}$,
which consists of cells $\kappa$ with boundaries $\partial\kappa$.
Let $\mathcal{E}$ denote the set of interfaces $\epsilon$, consisting
of the interior interfaces,
\[
\epsilon_{\mathcal{I}}\in\mathcal{E_{I}}=\left\{ \epsilon_{\mathcal{I}}\in\mathcal{E}\middlebar\epsilon_{\mathcal{I}}\cap\partial\Omega=\emptyset\right\} ,
\]
and boundary interfaces, 
\[
\epsilon_{\partial}\in\mathcal{E}_{\partial}=\left\{ \epsilon_{\partial}\in\mathcal{E}\middlebar\epsilon_{\partial}\subset\partial\Omega\right\} .
\]
At interior interfaces, there exist $\kappa^{+}$ and $\kappa^{-}$
such that $\epsilon_{\mathcal{I}}=\partial\kappa^{+}\cap\partial\kappa^{-}$.
$n^{+}$ and $n^{-}$ denote the outward facing normals of $\kappa^{+}$
and $\kappa^{-}$, respectively. Let $V_{h}^{p}$ denote the space
of test functions,
\begin{eqnarray}
V_{h}^{p} & = & \left\{ \bm{\mathfrak{v}}\in\left[L^{2}\left(\Omega\right)\right]^{m}\middlebar\forall\kappa\in\mathcal{T},\left.\bm{\mathfrak{v}}\right|_{\kappa}\in\left[\mathcal{P}_{p}(\kappa)\right]^{m}\right\} ,\label{eq:discrete-subspace}
\end{eqnarray}
where $\mathcal{P}_{p}(\kappa)$ is a space of polynomial functions
of degree no greater than $p$ in $\kappa$.

The semi-discrete form of Equation~(\ref{eq:conservation-law-strong-form})
is as follows: find $\bm{y}\in V_{h}^{p}$ such that
\begin{gather}
\sum_{\kappa\in\mathcal{T}}\left(\partial_{t}\bm{y},\bm{\mathfrak{v}}\right)_{\kappa}-\sum_{\kappa\in\mathcal{T}}\left(\bm{\mathcal{F}}\left(\bm{y}\right),\nabla\bm{\mathfrak{v}}\right)_{\kappa}+\sum_{\epsilon\in\mathcal{E}}\left(\bm{\mathcal{F}}^{\dagger}\left(\bm{y},\bm{n}\right),\left\llbracket \bm{\mathfrak{v}}\right\rrbracket \right)_{\mathcal{E}}=0\qquad\forall\:\bm{\mathfrak{v}}\in V_{h}^{p},\label{eq:semi-discrete-form}
\end{gather}
where $\left(\cdot,\cdot\right)$ denotes the inner product, $\bm{\mathcal{F}}^{\dagger}\left(y,n\right)$
is the flux function, and $\left\llbracket \cdot\right\rrbracket $
is the jump operator. At interior interfaces, $\left\llbracket \bm{\mathfrak{v}}\right\rrbracket =\bm{\mathfrak{v}}^{+}-\bm{\mathfrak{v}}^{-}$
and $\bm{\mathcal{F}}^{\dagger}\left(\bm{y},\bm{n}\right)=\bm{\mathcal{F}}^{\dagger}\left(\bm{y}^{+},\bm{y}^{-},\bm{n}\right)$,
where $\bm{\mathcal{F}}^{\dagger}\left(\bm{y}^{+},\bm{y}^{-},\bm{n}\right)$
is a numerical flux. Throughout this work, we use the local Lax-Friedrichs
numerical flux,
\begin{align}
\bm{\mathcal{F}}^{\dagger}\left(\bm{y}^{+},\bm{y}^{-},\bm{n}\right) & =\average{\bm{\mathcal{F}}\left(\bm{y}\right)}\cdot\bm{n}+\frac{1}{2}\lambda\left\llbracket \bm{y}\right\rrbracket ,\label{eq:lax-friedrichs}
\end{align}
where $\average{\cdot}$ denotes the average operator and $\lambda$
is an estimate of the maximum wave speed across $\bm{y}^{+}$ and
$\bm{y}^{-}$. More information on boundary conditions can be found
in~\citep{Joh20,Chi24_viscous}. The element-local solution, $\left.\bm{y}\right|_{\kappa}$,
can be expanded as (dropping the $\left.\left(\cdot\right)\right|_{\kappa}$
notation for brevity)
\[
\bm{y}=\sum_{j=1}^{n_{b}}\widehat{\bm{y}}_{j}\phi_{j},
\]
where $\widehat{\bm{y}}_{j}$ is the $j$th polynomial coefficient,
$\phi_{j}$ is the $j$th basis function, and $n_{b}$ is the number
of basis functions. A nodal basis is employed in this work. In element-local
integral form, we have (assuming all interior faces)

\begin{equation}
\int_{\kappa}\phi_{i}\partial_{t}\bm{y}dx-\int_{\kappa}\nabla\phi_{i}\cdot\bm{\mathcal{F}}\left(\bm{y}\right)dx+\oint_{\partial\kappa}\phi_{i}\bm{\mathcal{F}}^{\dagger}\left(\bm{y}^{+},\bm{y}^{-},\bm{n}\right)ds=0,\quad i=1,\ldots,n_{b}.\label{eq:dg-integral-form-conservative}
\end{equation}
Introducing the mass matrix, $\mathcal{M}_{ij}=\int_{\kappa}\phi_{i}\phi_{j}dx,$
we can rewrite Equation~(\ref{eq:dg-integral-form-conservative})
as
\begin{equation}
\bm{\mathcal{M}}d_{t}\widehat{\bm{y}}-\int_{\kappa}\nabla\bm{\Phi}\cdot\bm{\mathcal{F}}\left(\bm{y}\right)dx+\oint_{\partial\kappa}\bm{\Phi}\bm{\mathcal{F}}^{\dagger}\left(\bm{y}^{+},\bm{y}^{-},\bm{n}\right)ds=0,\label{eq:dg-mass-matrix-form-conservative}
\end{equation}
where $\widehat{\bm{y}}=\left[\widehat{\bm{y}}_{1},\ldots,\widehat{\bm{y}}_{n_{b}}\right]$
is the vector of polynomial coefficients and $\bm{\Phi}=\left[\phi_{1},\ldots,\phi_{n_{b}}\right]^{T}$
is the vector of basis functions. Left-multiplying both sides by $\bm{\Phi}^{T}\bm{\mathcal{M}}^{-1}$
yields
\begin{equation}
\partial_{t}\bm{y}-\bm{\Phi}^{T}\bm{\mathcal{M}}^{-1}\int_{\kappa}\nabla\bm{\Phi}\cdot\bm{\mathcal{F}}\left(\bm{y}\right)dx+\bm{\Phi}^{T}\bm{\mathcal{M}}^{-1}\oint_{\partial\kappa}\bm{\Phi}\bm{\mathcal{F}}^{\dagger}\left(\bm{y}^{+},\bm{y}^{-},\bm{n}\right)ds=0,\label{eq:dg-state-evolution-conservative}
\end{equation}
which represents the temporal evolution of $\bm{y}\left(\bm{x},t\right)$.
Here, it is implicitly assumed that, for example, $\bm{\mathcal{M}}d_{t}\widehat{\bm{y}}$
represents the matrix-vector product between the mass matrix and each
of the $m$ components of $d_{t}\widehat{\bm{y}}$ (and similarly
for related operations).

\subsubsection{Velocity-equilibrium preservation}

We examine whether the DG discretization in the previous subsection
exactly preserves velocity equilibrium. This represents an extension
of the analysis by Fujiwara et al.~\citep{Fuj23} to DG schemes and
a generalization of that by~\citep{Ban23}, who focused on discontinuous
interfaces, to smooth interfaces. The velocity evolution equation
is given by~\citep{Fuj23}

\begin{equation}
\partial_{t}\bm{v}=\frac{1}{\rho}\partial_{t}\left(\rho\bm{v}\right)-\frac{\bm{v}}{\rho}\partial_{t}\rho,\label{eq:velocity-evolution-equation}
\end{equation}
where $\partial_{t}\rho=\sum_{i}W_{i}\partial_{t}C_{i}$. For simplicity,
we focus on the one-dimensional case, but extensions to $d>1$ are
straightforward. 
\begin{lem}
\label{lem:velocity-equilibrium-preservation-standard-dg}In the case
of constant pressure $(P=P_{0})$ and velocity $\left(\bm{v}=\bm{v}_{0}\right)$,
the DG discretization~(\ref{eq:dg-integral-form-conservative}) preserves
velocity equilibrium (i.e., $\partial_{t}v=0$ necessarily) if and
only if the numerical flux satisfies
\begin{equation}
P_{0}\oint_{\partial\kappa}\bm{\Phi}\cdot ndx-\oint_{\partial\kappa}\bm{\Phi}\mathcal{F}_{\rho v}^{\dagger}\left(\bm{y}^{+},\bm{y}^{-},n\right)ds+v_{0}\oint_{\partial\kappa}\bm{\Phi}\sum_{i}W_{i}\mathcal{F}_{C_{i}}^{\dagger}\left(\bm{y}^{+},\bm{y}^{-},n\right)ds=0,\label{eq:velocity-equilibrium-condition-general-numerical-flux}
\end{equation}
where $\mathcal{F}_{\rho v}^{\dagger}\left(\bm{y}^{+},\bm{y}^{-},n\right)$
is the momentum component of the numerical flux and $\mathcal{F}_{C_{i}}^{\dagger}\left(\bm{y}^{+},\bm{y}^{-},n\right)$
is the component corresponding to the $i$th species concentration,.
\end{lem}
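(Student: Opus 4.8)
The plan is to begin from the componentwise form of the DG state-evolution equation~(\ref{eq:dg-state-evolution-conservative}), isolating the momentum row and the species-concentration rows, and to substitute these into the velocity-evolution equation~(\ref{eq:velocity-evolution-equation}). Since $\partial_t\rho=\sum_i W_i\partial_t C_i$, multiplying through by $\rho$ recasts the target as showing that the quantity $\rho\,\partial_t v=\partial_t(\rho v)-v_0\sum_i W_i\partial_t C_i$ vanishes identically if and only if~(\ref{eq:velocity-equilibrium-condition-general-numerical-flux}) holds. In one dimension the relevant fluxes are $\mathcal{F}_{\rho v}=\rho v^2+P$ and $\mathcal{F}_{C_i}=vC_i$, so~(\ref{eq:dg-state-evolution-conservative}) expresses each of $\partial_t(\rho v)$ and $\partial_t C_i$ as the image under $\bm{\Phi}^T\bm{\mathcal{M}}^{-1}$ of a volume term minus a numerical-flux boundary term, with integrals treated exactly.

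First I would impose the equilibrium state $v=v_0$, $P=P_0$ pointwise, which collapses the volume fluxes to $\mathcal{F}_{\rho v}=v_0^2\rho+P_0$ and $\mathcal{F}_{C_i}=v_0 C_i$. Forming the combination $\partial_t(\rho v)-v_0\sum_i W_i\partial_t C_i$ and invoking the identity $\sum_i W_i C_i=\rho$, the two volume contributions proportional to $v_0^2\rho$ cancel exactly, leaving only the pressure term $P_0\int_\kappa\nabla\bm{\Phi}\,dx$ inside the volume integral. This cancellation is the crux of the argument, as it is what isolates the pressure contribution. Applying the divergence theorem, $\int_\kappa\nabla\bm{\Phi}\,dx=\oint_{\partial\kappa}\bm{\Phi}\,n\,ds$, converts this remaining volume term into a boundary term, yielding
\[
\rho\,\partial_t v=\bm{\Phi}^T\bm{\mathcal{M}}^{-1}\left[P_0\oint_{\partial\kappa}\bm{\Phi}\,n\,ds-\oint_{\partial\kappa}\bm{\Phi}\,\mathcal{F}_{\rho v}^{\dagger}\,ds+v_0\sum_i W_i\oint_{\partial\kappa}\bm{\Phi}\,\mathcal{F}_{C_i}^{\dagger}\,ds\right],
\]
whose bracketed factor is precisely the left-hand side of~(\ref{eq:velocity-equilibrium-condition-general-numerical-flux}).

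To close the equivalence I would use $\rho>0$ (the no-vacuum assumption), so that $\partial_t v\equiv0$ if and only if $\rho\,\partial_t v\equiv0$, together with the invertibility of the mass matrix $\bm{\mathcal{M}}$ and the linear independence of the basis $\{\phi_j\}$: the map $\bm{g}\mapsto\bm{\Phi}^T\bm{\mathcal{M}}^{-1}\bm{g}$ sends a coefficient vector to the zero polynomial only when $\bm{g}=0$. Hence the displayed expression vanishes identically exactly when the bracketed boundary vector is zero, which is~(\ref{eq:velocity-equilibrium-condition-general-numerical-flux}), and this delivers both implications simultaneously. I do not expect a genuine obstacle here; the main subtlety lies in the bookkeeping rather than in any single computation, specifically in recognizing that the momentum convective flux and the mass-weighted species fluxes contribute equal and opposite $v_0^2\rho$ volume terms, and in justifying the final step that strips $\bm{\Phi}^T\bm{\mathcal{M}}^{-1}$ to pass from ``$\partial_t v$ vanishes as a function on $\kappa$'' to ``the flux condition holds,'' which is legitimate because every quantity involved lies in $V_h^p|_\kappa$.
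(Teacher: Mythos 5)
Your proposal is correct and follows essentially the same route as the paper: substitute the momentum and species rows of the DG state evolution into the velocity-evolution identity, use the equilibrium state to cancel the $v_{0}^{2}\int_{\kappa}\partial_{x}\bm{\Phi}\,\rho\,dx$ volume contributions, convert the surviving $P_{0}$ volume term to a boundary term, and then invoke the nonsingularity of $\bm{\mathcal{M}}$ together with the linear independence of $\{\phi_{j}\}$ for the converse direction. The only (cosmetic) difference is that you clear the $1/\rho$ factors by working with $\rho\,\partial_{t}v$, which is harmless under the paper's standing assumption $\rho>0$ and, if anything, tidies up the notation.
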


\begin{proof}
Substituting the momentum and concentration components of the temporal
evolution of the local solution~(\ref{eq:dg-state-evolution-conservative})
into Equation~(\ref{eq:velocity-evolution-equation}) yields
\begin{align*}
\partial_{t}v= & \bm{\Phi}^{T}\bm{\mathcal{M}}^{-1}\left(\frac{1}{\rho}\left[\int_{\kappa}\partial_{x}\bm{\Phi}\begin{pmatrix}\rho v^{2}+P\end{pmatrix}dx-\oint_{\partial\kappa}\bm{\Phi}\mathcal{F}_{\rho v}^{\dagger}\left(\bm{y}^{+},\bm{y}^{-},n\right)ds\right]\right.\\
 & \left.-\frac{v}{\rho}\left[\int_{\kappa}\partial_{x}\bm{\Phi}\begin{pmatrix}\rho v\end{pmatrix}dx-\oint_{\partial\kappa}\bm{\Phi}\sum_{i}W_{i}\mathcal{F}_{C_{i}}^{\dagger}\left(\bm{y}^{+},\bm{y}^{-},n\right)ds\right]\right)\\
= & \bm{\Phi}^{T}\bm{\mathcal{M}}^{-1}\Biggl(\frac{1}{\rho}\left[v_{0}^{2}\int_{\kappa}\partial_{x}\bm{\Phi}\rho dx+P_{0}\int_{\kappa}\partial_{x}\bm{\Phi}dx-\oint_{\partial\kappa}\bm{\Phi}\mathcal{F}_{\rho v}^{\dagger}\left(\bm{y}^{+},\bm{y}^{-},n\right)ds\right]\\
 & -\frac{v_{0}}{\rho}\left[v_{0}\int_{\kappa}\partial_{x}\bm{\Phi}\cdot\begin{pmatrix}\rho\end{pmatrix}dx-\oint_{\partial\kappa}\bm{\Phi}\sum_{i}W_{i}\mathcal{F}_{C_{i}}^{\dagger}\left(\bm{y}^{+},\bm{y}^{-},n\right)ds\right]\Biggr)\\
= & \bm{\Phi}^{T}\bm{\mathcal{M}}^{-1}\left(\frac{P_{0}}{\rho}\oint_{\partial\kappa}\bm{\Phi}\cdot ndx-\frac{1}{\rho}\oint_{\partial\kappa}\bm{\Phi}\mathcal{F}_{\rho v}^{\dagger}\left(\bm{y}^{+},\bm{y}^{-},n\right)ds+\frac{v_{0}}{\rho}\oint_{\partial\kappa}\bm{\Phi}\sum_{i}W_{i}\mathcal{F}_{C_{i}}^{\dagger}\left(\bm{y}^{+},\bm{y}^{-},n\right)ds\right),
\end{align*}
where the last line assumes sufficiently accurate numerical integration.
Clearly, if~(\ref{eq:velocity-equilibrium-condition-general-numerical-flux})
is satisfied, the RHS vanishes and $\partial_{t}v=0$. Next, we consider
the reverse. Since $\left\{ \phi_{1},\ldots,\phi_{n_{b}}\right\} $
forms a basis of $\mathcal{P}_{p}(\kappa)$, $\partial_{t}v=0$ requires
\[
\bm{\mathcal{M}}^{-1}\left(\frac{P_{0}}{\rho}\oint_{\partial\kappa}\bm{\Phi}\cdot ndx-\frac{1}{\rho}\oint_{\partial\kappa}\bm{\Phi}\mathcal{F}_{\rho v}^{\dagger}\left(\bm{y}^{+},\bm{y}^{-},n\right)ds+\frac{v_{0}}{\rho}\oint_{\partial\kappa}\bm{\Phi}\sum_{i}W_{i}\mathcal{F}_{C_{i}}^{\dagger}\left(\bm{y}^{+},\bm{y}^{-},n\right)ds\right)=0,
\]
which implies that the quantity inside the parentheses is zero because
the mass matrix is nonsingular. Therefore, (\ref{eq:velocity-equilibrium-condition-general-numerical-flux})
must be satisfied.
\end{proof}
\begin{rem}
The Lax-Friedrichs flux~(\ref{eq:lax-friedrichs}) satisfies~(\ref{eq:velocity-equilibrium-condition-general-numerical-flux}).
To show this, we substitute~(\ref{eq:lax-friedrichs}) into~(\ref{eq:velocity-equilibrium-condition-general-numerical-flux}),
yielding
\[
\frac{P_{0}}{\rho}\oint_{\partial\kappa}\bm{\Phi}\cdot ndx-\frac{1}{\rho}\oint_{\partial\kappa}\bm{\Phi}\left[v_{0}^{2}\average{\rho}\cdot n+P_{0}\cdot n+\frac{v_{0}}{2}\lambda\left\llbracket \rho\right\rrbracket \right]ds+\frac{v_{0}}{\rho}\oint_{\partial\kappa}\bm{\Phi}\left[v_{0}\average{\rho}\cdot n+\frac{1}{2}\lambda\left\llbracket \rho\right\rrbracket \right]ds=0
\]
since all terms on the LHS cancel out. Other numerical fluxes, such
as the HLLC flux~\citep{Tor13}, also satisfy~(\ref{eq:velocity-equilibrium-condition-general-numerical-flux}).
\end{rem}

\begin{rem}
The above analysis focuses on the semidiscrete setting. For certain
time integrators, it is straightforward to show fully discrete preservation
of velocity equilibrium. For example, letting $\left(\cdot\right)^{n}$
denote the $n$th time step, the Forward Euler scheme combined with
the Lax-Friedrichs numerical flux gives
\begin{align*}
\left(\rho\right)^{n+1} & =\left(\rho\right)^{n}+\bm{\Phi}^{T}\bm{\mathcal{M}}^{-1}\Delta t\left[\int_{\kappa}\partial_{x}\bm{\Phi}\begin{pmatrix}\rho v\end{pmatrix}dx-\oint_{\partial\kappa}\bm{\Phi}\sum_{i}W_{i}\mathcal{F}_{C_{i}}^{\dagger}\left(\bm{y}^{+},\bm{y}^{-},n\right)ds\right]^{n}\\
 & =\left(\rho\right)^{n}+\bm{\Phi}^{T}\bm{\mathcal{M}}^{-1}\Delta t\left[v_{0}\int_{\kappa}\partial_{x}\bm{\Phi}\rho dx-\oint_{\partial\kappa}\bm{\Phi}\sum_{i}W_{i}\mathcal{F}_{C_{i}}^{\dagger}\left(\bm{y}^{+},\bm{y}^{-},n\right)ds\right]^{n}
\end{align*}
and
\begin{align*}
\left(\rho v\right)^{n+1} & =\left(\rho v\right)^{n}+\bm{\Phi}^{T}\bm{\mathcal{M}}^{-1}\Delta t\left[\int_{\kappa}\partial_{x}\bm{\Phi}\begin{pmatrix}\rho v^{2}+P\end{pmatrix}dx-\oint_{\partial\kappa}\bm{\Phi}\mathcal{F}_{\rho v}^{\dagger}\left(\bm{y}^{+},\bm{y}^{-},n\right)ds\right]^{n}\\
 & =\left(\rho v\right)^{n}+\bm{\Phi}^{T}\bm{\mathcal{M}}^{-1}\Delta t\left[v_{0}^{2}\int_{\kappa}\partial_{x}\bm{\Phi}\rho dx+P_{0}\oint_{\partial\kappa}\bm{\Phi}\cdot ndx-\oint_{\partial\kappa}\bm{\Phi}\mathcal{F}_{\rho v}^{\dagger}\left(\bm{y}^{+},\bm{y}^{-},n\right)ds\right]^{n}.
\end{align*}
Assuming~(\ref{eq:velocity-equilibrium-condition-general-numerical-flux})
holds, we can rewrite $\left(\rho v\right)^{n+1}$ as
\begin{align*}
\left(\rho v\right)^{n+1} & =\left(\rho v\right)^{n}+\bm{\Phi}^{T}\bm{\mathcal{M}}^{-1}\Delta t\left[v_{0}^{2}\int_{\kappa}\partial_{x}\bm{\Phi}\rho dx-v_{0}\oint_{\partial\kappa}\bm{\Phi}\sum_{i}W_{i}\mathcal{F}_{C_{i}}^{\dagger}\left(\bm{y}^{+},\bm{y}^{-},n\right)ds\right]^{n}\\
 & =v_{0}\left(\rho\right)^{n}+\bm{\Phi}^{T}\bm{\mathcal{M}}^{-1}\Delta tv_{0}\left[v_{0}\int_{\kappa}\partial_{x}\bm{\Phi}\rho dx-\oint_{\partial\kappa}\bm{\Phi}\sum_{i}W_{i}\mathcal{F}_{C_{i}}^{\dagger}\left(\bm{y}^{+},\bm{y}^{-},n\right)ds\right]^{n}\\
 & =v_{0}\left(\rho\right)^{n+1}.
\end{align*}
Therefore, 
\[
v^{n+1}=\frac{\left(\rho v\right)^{n+1}}{\left(\rho\right)^{n+1}}=\frac{v_{0}\left(\rho\right)^{n+1}}{\left(\rho\right)^{n+1}}=v_{0}.
\]
This result directly extends to strong-stability-preserving Runge-Kutta
(SSPRK) schemes~\citep{Got01}, which can be expressed as convex
combinations of Forward Euler steps.
\end{rem}

\begin{rem}
\label{rem:exact-integration-of-pressure}In the above analysis, it
is assumed that $\int_{\kappa}\partial_{x}\bm{\Phi}Pdx$ is exactly
integrated (i.e., $\int_{\kappa}\partial_{x}\bm{\Phi}Pdx$ is evaluated
to be equal to $P_{0}\int_{\kappa}\partial_{x}\bm{\Phi}dx=P_{0}\oint_{\partial\kappa}\bm{\Phi}\cdot ndx$).
However, with standard overintegration, this will often not be the
case due to the nonlinear relationship between pressure and internal
energy. Assuming a nodal basis and the nodal values of pressure to
be $P_{0}$, colocated integration, as well as an overintegration
strategy wherein pressure is projected onto $\mathcal{P}_{p}(\kappa)$
in a particular manner, $\int_{\kappa}\partial_{x}\bm{\Phi}Pdx$ will
be exactly integrated. See~\citep{Joh20,Ban23} for additional discussion.
Regardless, failure to additionally preserve pressure equilibrium,
which will be demonstrated next, will eventually disturb velocity
equilibrium.
\end{rem}

\begin{rem}
The above analysis holds regardless of the thermodynamic model (e.g.,
calorically perfect, thermally perfect, non-ideal, etc.).
\end{rem}

\subsubsection{Pressure-equilibrium preservation}

\label{subsec:Pressure-equilibrium-preservation}

We now turn to pressure-equilibrium preservation. With $\rho u=\rho u\left(P,C_{1},\ldots,C_{n_{s}}\right)$,
the pressure-evolution equation can be written in terms of the conservative
variables as~\citep{Fuj23}

\begin{equation}
\partial_{t}P=\left(\frac{\partial\rho u}{\partial P}\right)_{C_{i}}^{-1}\left[\partial_{t}\left(\rho e_{t}\right)-v\partial_{t}\left(\rho v\right)+\frac{v^{2}}{2}\partial_{t}\rho-\sum_{i}\left(\frac{\partial\rho u}{\partial C_{i}}\right)_{P,C_{j}\neq i}\partial_{t}C_{i}\right].\label{eq:pressure-evolution-equation-conservative-variables}
\end{equation}

\begin{prop}
In the case of constant pressure $(P=P_{0})$ and velocity $\left(v=v_{0}\right)$,
the DG discretization~(\ref{eq:dg-integral-form-conservative}) preserves
pressure equilibrium (i.e., $\partial_{t}P=0$ necessarily) if and
only if the condition
\begin{equation}
\begin{aligned} & v_{0}\int_{\kappa}\partial_{x}\bm{\Phi}\rho udx-\oint_{\partial\kappa}\bm{\Phi}\mathcal{F}_{\rho e_{t}}^{\dagger}\left(\bm{y}^{+},\bm{y}^{-},n\right)ds+v_{0}\oint_{\partial\kappa}\bm{\Phi}\mathcal{F}_{\rho v}^{\dagger}\left(\bm{y}^{+},\bm{y}^{-},n\right)ds\\
 & -\frac{v_{0}^{2}}{2}\oint_{\partial\kappa}\bm{\Phi}\sum_{i}W_{i}\mathcal{F}_{C_{i}}^{\dagger}\left(\bm{y}^{+},\bm{y}^{-},n\right)ds=\sum_{i}\left(\frac{\partial\rho u}{\partial\rho_{i}}\right)_{P,\rho_{j}\neq i}\left[v_{0}\int_{\kappa}\partial_{x}\bm{\Phi}\rho_{i}dx-\oint_{\partial\kappa}\bm{\Phi}W_{i}\mathcal{F}_{C_{i}}^{\dagger}\left(\bm{y}^{+},\bm{y}^{-},n\right)ds\right]
\end{aligned}
\label{eq:pressure-equilibrium-condition-general-numerical-flux}
\end{equation}
is satisfied, where $\mathcal{F}_{\rho e_{t}}^{\dagger}\left(y^{+},y^{-},n\right)$
is the total-energy component of the numerical flux. 
\end{prop}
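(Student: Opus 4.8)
The plan is to mirror the proof of Lemma~\ref{lem:velocity-equilibrium-preservation-standard-dg}, substituting the semidiscrete evolution of each conservative variable from~(\ref{eq:dg-state-evolution-conservative}) into the pressure-evolution equation~(\ref{eq:pressure-evolution-equation-conservative-variables}) and specializing to the equilibrium state $P=P_0$, $v=v_0$. Concretely, I would write $\partial_t(\rho e_t)$, $\partial_t(\rho v)$, $\partial_t\rho=\sum_i W_i\partial_t C_i$, and each $\partial_t C_i$ in the common form $\bm{\Phi}^T\bm{\mathcal{M}}^{-1}[\int_\kappa\partial_x\bm{\Phi}\,f\,dx-\oint_{\partial\kappa}\bm{\Phi}\,\mathcal{F}^\dagger\,ds]$, reading off the volume fluxes $f$ from the components of~(\ref{eq:reacting-navier-stokes-spatial-convective-flux-component-1}): $v_0(\rho e_t+P_0)$ for energy, $\rho v_0^2+P_0$ for momentum, and $v_0 C_i$ for the $i$th concentration, with $v_0\rho=v_0\sum_i W_i C_i$ for the density.

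The first key step is the cancellation in the volume integrals. Using $\rho e_t=\rho u+\tfrac12\rho v_0^2$, the energy volume flux contributes $v_0\rho u+\tfrac12 v_0^3\rho+v_0 P_0$; the $-v_0\partial_t(\rho v)$ term contributes $-v_0^3\rho-v_0 P_0$; and the $\tfrac{v_0^2}{2}\partial_t\rho$ term contributes $\tfrac12 v_0^3\rho$. Adding these, the kinetic terms proportional to $v_0^3\rho$ and the pressure terms proportional to $v_0 P_0$ cancel identically, leaving only $v_0\int_\kappa\partial_x\bm{\Phi}\,\rho u\,dx$ in the volume integral together with the energy, momentum, and density surface fluxes; this reproduces the left-hand side of~(\ref{eq:pressure-equilibrium-condition-general-numerical-flux}). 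Note that, in contrast to the velocity lemma, the pressure terms cancel directly in the volume integrals, so no integration-by-parts identity relating $\int_\kappa\partial_x\bm{\Phi}\,dx$ to $\oint_{\partial\kappa}\bm{\Phi}\cdot n\,ds$ is needed here. The second key step is to rewrite the thermodynamic source term: since $\rho_i=W_i C_i$ with $W_i$ constant, the chain rule gives $(\partial\rho u/\partial C_i)_{P,C_{j\neq i}}=W_i(\partial\rho u/\partial\rho_i)_{P,\rho_{j\neq i}}$, which absorbs the molar mass into the concentration flux and turns $\sum_i(\partial\rho u/\partial C_i)\partial_t C_i$ into exactly the right-hand side of~(\ref{eq:pressure-equilibrium-condition-general-numerical-flux}).

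Having assembled $\partial_t P=(\partial\rho u/\partial P)^{-1}\bm{\Phi}^T\bm{\mathcal{M}}^{-1}(\cdots)$, I would close the iff as in the velocity lemma: the prefactor $(\partial\rho u/\partial P)^{-1}$ is nonzero, the mass matrix is nonsingular, and $\{\phi_1,\dots,\phi_{n_b}\}$ is a basis of $\mathcal{P}_p(\kappa)$, so $\partial_t P=0$ holds if and only if the bracketed vector vanishes, which is precisely~(\ref{eq:pressure-equilibrium-condition-general-numerical-flux}). As there, I would state that sufficiently accurate integration is assumed, so that the constant pressure volume terms integrate exactly and cancel.

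The main obstacle I anticipate is that, unlike the velocity case where a single pointwise factor $1/\rho$ could be pulled through $\bm{\Phi}^T\bm{\mathcal{M}}^{-1}(\cdots)$, the coefficients $(\partial\rho u/\partial\rho_i)_{P,\rho_{j\neq i}}$ are distinct and spatially varying, so the species contributions cannot be collapsed into a single fixed coefficient vector acting on the discrete operator. Care is therefore needed in the ``only if'' direction: the vanishing of $\partial_t P$ at every point must be argued with these coefficients kept outside the discrete operator rather than factored out as a common multiple. I would make explicit that~(\ref{eq:pressure-equilibrium-condition-general-numerical-flux}) is understood in this pointwise sense, so that its equivalence with $\partial_t P=0$ is exact.
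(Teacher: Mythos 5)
Your proposal is correct and follows essentially the same route as the paper: substitute the semidiscrete evolution of each conservative component into the pressure-evolution equation~(\ref{eq:pressure-evolution-equation-conservative-variables}), observe the cancellation of the kinetic and pressure volume terms to leave $v_{0}\int_{\kappa}\partial_{x}\bm{\Phi}\rho u\,dx$, use $\left(\partial\rho u/\partial P\right)_{C_{i}}^{-1}>0$ for the prefactor, and close the equivalence exactly as in Lemma~\ref{lem:velocity-equilibrium-preservation-standard-dg}. Your additional remarks --- that no divergence-theorem identity is needed here because the $v_{0}P_{0}$ contributions cancel directly in the volume integrals, and that the ``only if'' direction requires reading~(\ref{eq:pressure-equilibrium-condition-general-numerical-flux}) pointwise because the coefficients $\left(\partial\rho u/\partial\rho_{i}\right)_{P,\rho_{j\neq i}}$ are distinct and spatially varying --- are accurate refinements of details the paper leaves implicit, not a different argument.
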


\begin{proof}
Substituting the temporal evolution of the local solution~(\ref{eq:dg-state-evolution-conservative})
into Equation~(\ref{eq:pressure-evolution-equation-conservative-variables})
yields
\begin{align*}
\frac{\partial P}{\partial t}= & \bm{\Phi}^{T}\bm{\mathcal{M}}^{-1}\left(\frac{\partial\rho u}{\partial P}\right)_{C_{i}}^{-1}\Biggl\{ v_{0}\int_{\kappa}\partial_{x}\bm{\Phi}\rho udx-\oint_{\partial\kappa}\bm{\Phi}\mathcal{F}_{\rho e_{t}}^{\dagger}\left(\bm{y}^{+},\bm{y}^{-},n\right)ds\\
 & +v_{0}\oint_{\partial\kappa}\bm{\Phi}\mathcal{F}_{\rho v}^{\dagger}\left(\bm{y}^{+},\bm{y}^{-},n\right)ds-\frac{v_{0}^{2}}{2}\oint_{\partial\kappa}\bm{\Phi}\sum_{i}W_{i}\mathcal{F}_{C_{i}}^{\dagger}\left(\bm{y}^{+},\bm{y}^{-},n\right)ds\\
 & -\sum_{i}\left(\frac{\partial\rho u}{\partial C_{i}}\right)_{P,\rho_{j}\neq i}\left[v_{0}\int_{\kappa}\partial_{x}\bm{\Phi}\rho_{i}dx-\oint_{\partial\kappa}\bm{\Phi}W_{i}\mathcal{F}_{C_{i}}^{\dagger}\left(\bm{y}^{+},\bm{y}^{-},n\right)ds\right]\Biggr\}.
\end{align*}
Assuming $\rho>0$, $\left(\frac{\partial\rho u}{\partial P}\right)_{C_{i}}^{-1}=\left(\frac{\partial\rho e_{t}}{\partial P}\right)_{C_{i}}^{-1}>0$
(see~\ref{sec:appendix-derivative-total-energy}). The proof then
follows the same logic as the proof of Lemma~\ref{lem:velocity-equilibrium-preservation-standard-dg}.
\end{proof}
\begin{rem}
(\ref{eq:pressure-equilibrium-condition-general-numerical-flux})
is an extension of the discrete-pressure-equilibrium condition by
Fujiwara et al.~\citep{Fuj23} from finite-volume methods to DG schemes,
which is complicated further by the presence of the volumetric terms.
Even restricting the discussion to a finite-volume framework, most
numerical fluxes, including the Lax-Friedrichs flux, generally do
not satisfy the discrete-pressure-equilibrium condition~\citep{Fuj23}.
Fujiwara et al.~\citep{Fuj23} developed a numerical flux that is
provably pressure-equilibrium-preserving for the specific case of
mixtures of calorically perfect gases. Terashima et al.~\citep{Ter24}
extended this numerical flux to real-fluid mixtures, although the
more complicated relationship between pressure and internal energy
allows for only \emph{approximate} pressure-equilibrium preservation.
\end{rem}

\begin{rem}
In the case of a monocomponent calorically perfect gas, 
\[
\rho u=\frac{P}{\gamma-1},\quad\left(\frac{\partial\rho u}{\partial P}\right)_{C}=\frac{1}{\gamma-1},\quad\left(\frac{\partial\rho u}{\partial C}\right)_{P}=0,
\]
where $C$ is the concentration of the sole species and $\gamma$
is the (constant) specific heat ratio. The pressure-equilibrium condition~(\ref{eq:pressure-equilibrium-condition-general-numerical-flux})
then reduces to
\begin{equation}
\frac{v_{0}P_{0}}{\gamma-1}\int_{\kappa}\partial_{x}\bm{\Phi}dx-\oint_{\partial\kappa}\bm{\Phi}\mathcal{F}_{\rho e_{t}}^{\dagger}\left(\bm{y}^{+},\bm{y}^{-},n\right)ds+v_{0}\oint_{\partial\kappa}\bm{\Phi}\mathcal{F}_{\rho v}^{\dagger}\left(\bm{y}^{+},\bm{y}^{-},n\right)ds-\frac{v_{0}^{2}}{2}W\oint_{\partial\kappa}\bm{\Phi}\mathcal{F}_{C}^{\dagger}\left(\bm{y}^{+},\bm{y}^{-},n\right)ds=0.\label{eq:pressure-equilibrium-condition-general-numerical-flux-CPG}
\end{equation}
Substituting the definition of the Lax-Friedrichs flux function into
the LHS of~(\ref{eq:pressure-equilibrium-condition-general-numerical-flux-CPG})
yields
\[
\begin{aligned} & \frac{v_{0}P_{0}}{\gamma-1}\int_{\kappa}\partial_{x}\bm{\Phi}dx-\oint_{\partial\kappa}\bm{\Phi}\left[v_{0}\left(\frac{P_{0}}{\gamma-1}+\frac{1}{2}v_{0}^{2}\average{\rho}+P_{0}\right)\cdot n+\frac{1}{2}\lambda\left\llbracket \rho e_{t}\right\rrbracket \right]ds\\
 & +v_{0}\oint_{\partial\kappa}\bm{\Phi}\left[\left(v_{0}^{2}\average{\rho}+P_{0}\right)\cdot n+\frac{v_{0}}{2}\lambda\left\llbracket \rho\right\rrbracket \right]ds-\frac{v_{0}^{2}}{2}\oint_{\partial\kappa}\bm{\Phi}\left[v_{0}\average{\rho}\cdot n+\frac{1}{2}\lambda\left\llbracket \rho\right\rrbracket \right]ds\\
 & =\frac{v_{0}P_{0}}{\gamma-1}\oint_{\partial\kappa}\bm{\Phi}ds-\frac{v_{0}P_{0}}{\gamma-1}\oint_{\partial\kappa}\bm{\Phi}ds-\frac{1}{2}\oint_{\partial\kappa}\bm{\Phi}\lambda\left\llbracket \frac{P_{0}}{\gamma-1}\right\rrbracket ds-\frac{v_{0}^{2}}{4}\oint_{\partial\kappa}\bm{\Phi}\lambda\left\llbracket \rho\right\rrbracket ds+\frac{v_{0}^{2}}{4}\oint_{\partial\kappa}\bm{\Phi}\lambda\left\llbracket \rho\right\rrbracket ds,
\end{aligned}
\]
which vanishes; therefore, pressure equilibrium is discretely preserved.
\end{rem}

\section{Compressible, multicomponent Euler equations: Pressure-based formulation}

We now introduce a different strategy wherein the total-energy equation
is replaced by the pressure-evolution equation. This normally results
in exact preservation of pressure equilibrium but loss of total-energy
conservation; therefore, we incorporate the correction terms of Abgrall~\citep{Abg18}
and Abgrall et al.~\citep{Abg22} to restore total-energy conservation.
However, additional modifications are required to retain exact preservation
of pressure equilibrium, velocity equilibrium, and zero species concentrations.

The pressure-evolution equation is given by~\citep{Ter12,Kaw15}
\begin{equation}
\partial_{t}P+\nabla\cdot\left(P\bm{v}\right)+\left(\rho c^{2}-P\right)\nabla\cdot\bm{v}=0,\label{eq:pressure-evolution-equation}
\end{equation}
where $c$ is the speed of sound. Note that for thermally perfect
gases, $c^{2}=\gamma P/\rho$, where $\gamma$ is the specific heat
ratio, such that Equation~(\ref{eq:pressure-evolution-equation})
reduces to
\[
\partial_{t}P+\nabla\cdot\left(P\bm{v}\right)+\left(\gamma-1\right)P\nabla\cdot\bm{v}=0.
\]
With the total-energy equation replaced by the pressure-evolution
equation, the governing equations are now written in nonconservative
form as
\begin{equation}
\partial_{t}\bm{y}+\nabla\cdot\bm{\mathcal{F}}\left(\bm{y}\right)+\bm{\mathcal{B}}\left(\bm{y}\right):\nabla\bm{y}=0,\label{eq:conservation-law-strong-form-with-pressure}
\end{equation}
where the state vector is now defined as
\begin{equation}
\bm{y}=\left(\rho v_{1},\ldots,\rho v_{d},P,C_{1},\ldots,C_{n_{s}}\right)^{T}\label{eq:reacting-navier-stokes-state-pressure}
\end{equation}
and the $k$th spatial component of $\bm{\mathcal{F}}\left(\bm{y}\right)$
is given by
\[
\bm{\mathcal{F}}_{k}\left(\bm{y}\right)=\left(\rho v_{k}v_{1}+P\delta_{k1},\ldots,\rho v_{k}v_{d}+P\delta_{kd},Pv_{k},v_{k}C_{1},\ldots,v_{k}C_{n_{s}}\right)^{T}.
\]
$\bm{\mathcal{B}}\left(\bm{y}\right)$ is a third-order tensor for
which 
\[
\bm{\mathcal{B}}\left(\bm{y}\right):\nabla\bm{y}=\left(0,\ldots,0,\left(\rho c^{2}-P\right)\nabla\cdot\bm{v},0,\ldots,0\right)^{T}.
\]
The only nonzero component of $\bm{\mathcal{B}}\left(y\right)$ is
that corresponding to pressure, denoted $\bm{\mathcal{B}}_{P}\left(\bm{y}\right)$.
The $k$th spatial component of $\bm{\mathcal{B}}_{P}\left(\bm{y}\right)$
is defined as
\[
\bm{\mathcal{B}}_{P,k}\left(\bm{y}\right)=\frac{\rho c^{2}-P}{\rho}\left(\delta_{k1},\ldots,\delta_{kd},0,-W_{1}v_{k},\ldots,-W_{n_{s}}v_{k}\right)^{T},
\]
such that 
\[
\bm{\mathcal{B}}_{P}\left(\bm{y}\right):\nabla\bm{y}=\left(\rho c^{2}-P\right)\nabla\cdot\bm{v}.
\]

\subsection{Discontinuous Galerkin discretization}

To discretize the nonconservative term, we follow the DG scheme presented
in~\citep{Abg23}:
\begin{equation}
\int_{\kappa}\bm{\Phi}\partial_{t}\bm{y}dx+\oint_{\partial\kappa}\bm{\Phi}\bm{\mathcal{F}}^{\dagger}\left(\bm{y}^{+},\bm{y}^{-},\bm{n}\right)ds-\int_{\kappa}\nabla\bm{\Phi}\cdot\bm{\mathcal{F}}\left(\bm{y}\right)dx+\oint_{\partial\kappa}\bm{\Phi}\bm{\mathcal{D}}\left(\bm{y}^{+},\bm{y}^{-},\bm{n}\right)ds+\int_{\kappa}\bm{\Phi}\bm{\mathcal{B}}\left(\bm{y}\right):\nabla\bm{y}=0,\label{eq:DG-discretization-nonconservative}
\end{equation}
where, unless otherwise specified, $\bm{\mathcal{F}}^{\dagger}\left(\bm{y}^{+},\bm{y}^{-},\bm{n}\right)$
is the Lax-Friedrichs numerical flux~(\ref{eq:lax-friedrichs}) and
the last two terms on the LHS approximate the nonconservative flux.
$\bm{\mathcal{D}}\left(\bm{y}^{+},\bm{y}^{-},\bm{n}\right)$ is a
nonconservative jump term for which only the pressure component is
nonzero, defined as
\[
\mathcal{D}_{P}\left(\bm{y}^{+},\bm{y}^{-},\bm{n}\right)=\frac{1}{2}\left.\left(\bm{\mathcal{B}}_{P}\cdot\bm{n}\right)\right|_{\average{\bm{y}}}\cdot\left(\bm{y}^{-}-\bm{y}^{+}\right).
\]
Alternative discretizations of the nonconservative flux are discussed
in~\citep{Lv20}. In~\ref{sec:appendix-compressible-vortex-transport},
we present grid-convergence results for a simple vortex-transport
problem to verify our implementation of the nonconservative terms
in Equation~(\ref{eq:DG-discretization-nonconservative}).

Since the semi-discrete forms of the conservation equations for species
concentrations and momentum remain unchanged, Lemma~\ref{lem:velocity-equilibrium-preservation-standard-dg}
holds. Therefore, the DG scheme~(\ref{eq:DG-discretization-nonconservative})
still preserves velocity equilibrium. 

\subsubsection{Pressure-equilibrium preservation}

\label{subsec:Pressure-equilibrium-preservation-nonconservative}The
pressure-equilibrium-preservation property of~(\ref{eq:DG-discretization-nonconservative})
is analyzed in the following lemma.
\begin{lem}
\label{lem:dg-uncorrected-PEP}In the case of constant pressure $(P=P_{0})$
and velocity $\left(\bm{v}=\bm{v}_{0}\right)$, the DG scheme~(\ref{eq:DG-discretization-nonconservative})
with the Lax-Friedrichs numerical flux~(\ref{eq:lax-friedrichs})
preserves pressure equililbrium (i.e., $\partial_{t}P=0$).
\end{lem}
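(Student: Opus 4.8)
The plan is to isolate the pressure row of the DG scheme~(\ref{eq:DG-discretization-nonconservative}), since the pressure-evolution equation is now discretized directly rather than recovered. Testing that row against $\phi_i$ produces a balance among five contributions: the time-derivative term $\int_{\kappa}\phi_i\partial_t P\,dx$; the interface flux $\oint_{\partial\kappa}\phi_i\mathcal{F}_P^{\dagger}\,ds$ with conservative pressure flux $\mathcal{F}_P=Pv$; the volumetric flux $-\int_{\kappa}\partial_x\phi_i\mathcal{F}_P\,dx$; the nonconservative jump term $\oint_{\partial\kappa}\phi_i\mathcal{D}_P\,ds$; and the nonconservative volume term $\int_{\kappa}\phi_i(\rho c^2-P)\partial_x v\,dx$. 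Working in one dimension as before, I would substitute $P=P_0$ and $v=v_0$ and show that the entire spatial residual collapses to zero, so that $\int_{\kappa}\phi_i\partial_t P\,dx=0$ for every $i$.

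First I would dispatch the two nonconservative contributions. The volume term vanishes immediately because $v=v_0$ is constant, so $\partial_x v=0$. For the jump term, I would use the explicit form $\mathcal{D}_P=\frac{1}{2}\left.\left(\bm{\mathcal{B}}_P\cdot n\right)\right|_{\average{\bm{y}}}\cdot(\bm{y}^{-}-\bm{y}^{+})$ together with the single nonzero row of $\bm{\mathcal{B}}_P$. Dotting $\bm{\mathcal{B}}_{P,1}$ into the state jump, the pressure slot contributes nothing (its coefficient is $0$), and the momentum and concentration slots combine into $\llbracket\rho v\rrbracket-v\llbracket\rho\rrbracket$, where $v$ is the velocity of the averaged interface state. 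The key observation is that in equilibrium $(\rho v)^{\pm}=v_0\rho^{\pm}$, and the averaged state also carries velocity $v_0$, so this combination telescopes to exactly zero; hence $\mathcal{D}_P=0$ and the jump term drops out.

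Next I would handle the conservative flux terms. Because the pressure flux is $\mathcal{F}_P=Pv=P_0v_0$ constant and $\llbracket P\rrbracket=0$ in equilibrium, the Lax-Friedrichs dissipation $\frac{1}{2}\lambda\llbracket P\rrbracket$ vanishes and the numerical flux reduces to $\mathcal{F}_P^{\dagger}=P_0v_0\,n$. The interface term is then $P_0v_0\oint_{\partial\kappa}\phi_i n\,ds$ while the volumetric term is $-P_0v_0\int_{\kappa}\partial_x\phi_i\,dx$. The summation-by-parts identity already used in Lemma~\ref{lem:velocity-equilibrium-preservation-standard-dg}, namely $\oint_{\partial\kappa}\phi_i n\,ds=\int_{\kappa}\partial_x\phi_i\,dx$, makes these two terms cancel exactly.

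With all spatial terms eliminated, I am left with $\bm{\mathcal{M}}\,d_t\widehat{P}=0$; since the mass matrix is nonsingular, $d_t\widehat{P}=0$ and therefore $\partial_t P=0$, which proves the claim. I expect the only delicate step to be the cancellation in the nonconservative jump term: one must track that the velocity appearing in $\bm{\mathcal{B}}_P$ is evaluated at the averaged interface state and verify that this averaged velocity is still $v_0$, so that $\llbracket\rho v\rrbracket-v\llbracket\rho\rrbracket$ vanishes even though $\rho$ itself is discontinuous across the interface. Everything else follows directly from $P$ and $v$ being constant and from the integration-by-parts identity.
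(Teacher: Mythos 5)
Your proposal is correct and follows essentially the same route as the paper's proof: the nonconservative volume term vanishes because $\nabla\cdot\bm{v}=0$, the jump term $\mathcal{D}_{P}$ telescopes to zero because the momentum jump equals $v_{0}$ times the density jump (which in turn equals $\sum_{i}W_{i}\llbracket C_{i}\rrbracket$) and the averaged interface state carries velocity $v_{0}$, and the remaining conservative flux terms cancel by the divergence theorem once the Lax--Friedrichs dissipation drops out with $\llbracket P\rrbracket=0$. Your explicit verification that the velocity of the averaged state is still $v_{0}$ is the same observation the paper uses implicitly, so there is nothing to add.
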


\begin{proof}
The semi-discrete form of the pressure-evolution equation is
\begin{align*}
\int_{\kappa}\bm{\Phi}\partial_{t}Pdx & =-\oint_{\partial\kappa}\bm{\Phi}P\bm{v}\cdot\bm{n}ds+\int_{\kappa}\nabla\bm{\Phi}\cdot\left(P\bm{v}\right)dx-\oint_{\partial\kappa}\bm{\Phi}\mathcal{D}_{P}\left(\bm{y}^{+},\bm{y}^{-},\bm{n}\right)ds-\int_{\kappa}\bm{\Phi}\mathcal{\bm{B}}_{P}\left(\bm{y}\right):\nabla\bm{y}dx\\
 & =-P_{0}\bm{v}_{0}\cdot\oint_{\partial\kappa}\bm{n}\bm{\Phi}ds+P_{0}\bm{v}_{0}\cdot\int_{\kappa}\nabla\bm{\Phi}dx-\oint_{\partial\kappa}\bm{\Phi}\mathcal{D}_{P}\left(\bm{y}^{+},\bm{y}^{-},\bm{n}\right)ds-\int_{\kappa}\bm{\Phi}\mathcal{\bm{B}}_{P}\left(\bm{y}\right):\nabla\bm{y}dx,
\end{align*}
where the first two terms on the RHS cancel out by a special case
of the divergence theorem. $\mathcal{D}_{P}\left(\bm{y}^{+},\bm{y}^{-},\bm{n}\right)$
is expanded as
\begin{align*}
\mathcal{D}_{P}\left(\bm{y}^{+},\bm{y}^{-},\bm{n}\right) & =\left.\frac{\rho c^{2}-P}{2\rho}\right|_{\average{\bm{y}}}\left(n_{1},\ldots,n_{d},0,-W_{1}v_{0}\cdot n,\ldots,-W_{n_{s}}v\cdot n\right)^{T}\begin{pmatrix}v_{0,1}\left(\rho^{-}-\rho^{+}\right)\\
\vdots\\
v_{0,d}\left(\rho^{-}-\rho^{+}\right)\\
\left(P\right)^{-}-\left(P\right)^{+}\\
\left(C_{1}\right)^{-}-\left(C_{1}\right)^{+}\\
\vdots\\
\left(C_{n_{s}}\right)^{-}-\left(C_{n_{s}}\right)^{+}
\end{pmatrix}\\
 & =\left.\frac{\rho c^{2}-P}{2\rho}\right|_{\average y}\left[\bm{v}_{0}\cdot\bm{n}\left(\rho^{-}-\rho^{+}\right)-\bm{v}_{0}\cdot\bm{n}\left(\rho_{1}^{-}-\rho_{1}^{+}\right)-\ldots-\bm{v}_{0}\cdot\bm{n}\left(\rho_{n_{s}}^{-}-\rho_{n_{s}}^{+}\right)\right]\\
 & =0.
\end{align*}
The final term on the RHS vanishes since $\bm{\mathcal{B}}_{P}\left(\bm{y}\right):\nabla\bm{y}=\left(\rho c^{2}-P\right)\nabla\cdot\bm{v}$
and the velocity is constant. Therefore, $\partial_{t}P=0$.
\end{proof}

\subsection{Correction term: Original formulation}

\label{subsec:correction-term-original}

First, we recast the element-local DG semi-discretization~(\ref{eq:DG-discretization-nonconservative})
as
\begin{equation}
\bm{\mathcal{M}}d_{t}\widehat{\bm{y}}+\widetilde{\bm{\mathcal{R}}}=0,\label{eq:DG-semidiscretization-with-uncorrected-residual}
\end{equation}
where $\widetilde{\mathcal{R}}$ is the uncorrected residual,
\begin{equation}
\widetilde{\bm{\mathcal{R}}}=\oint_{\partial\kappa}\bm{\Phi}\bm{\mathcal{F}}^{\dagger}\left(\bm{y}^{+},\bm{y}^{-},\bm{n}\right)ds-\int_{\kappa}\nabla\bm{\Phi}\cdot\bm{\mathcal{F}}\left(\bm{y}\right)dx+\oint_{\partial\kappa}\bm{\Phi}\bm{\mathcal{D}}\left(\bm{y}^{+},\bm{y}^{-},\bm{n}\right)ds+\int_{\kappa}\bm{\Phi}\bm{\mathcal{B}}\left(\bm{y}\right):\nabla\bm{y}.\label{eq:uncorrected-residual}
\end{equation}
As originally proposed in~\citep{Abg18,Abg22,Abg23}, the DG semi-discretization
is modified as
\begin{equation}
\bm{\mathcal{M}}d_{t}\widehat{\bm{y}}+\bm{\mathcal{R}}=0,\label{eq:DG-semidiscretization-with-corrected-residual}
\end{equation}
where $\bm{\mathcal{R}}$ is a corrected residual, the $k$th component
of which is written as
\[
\bm{\mathcal{R}}_{k}=\widetilde{\bm{\mathcal{R}}}_{k}+\bm{r}_{k},\quad k=1,\ldots,n_{b},
\]
with the correction term, $\bm{r}_{k}$, given by
\begin{equation}
\bm{r}_{k}=\alpha\left(\widehat{\bm{w}}_{k}-\overline{\bm{w}}\right).\label{eq:correction-term-original}
\end{equation}
$\widehat{\bm{w}}$ denotes the coefficients of the projection of
$\bm{w}=\partial_{\bm{y}}\left(\rho e_{t}\right)$, the derivative
of total energy with respect to the state, onto $V_{h}^{p}$, and
$\overline{\bm{w}}$ is defined as
\[
\overline{\bm{w}}=\frac{1}{n_{b}}\sum_{k=1}^{n_{b}}\widehat{\bm{w}}_{k}.
\]
Note that the correction term is conservative since $\sum_{k=1}^{n_{b}}\bm{r}_{k}$
is zero. The projection of $\bm{w}$ onto $V_{h}^{p}$ will be discussed
later. $\bm{w}$ is defined as
\begin{align}
\bm{w} & =\frac{\partial\left(\rho e_{t}\right)}{\partial\bm{y}}=\left(\begin{array}{ccccccc}
\frac{\partial\left(\rho e_{t}\right)}{\partial\rho v_{1}}, & \ldots, & \frac{\partial\left(\rho e_{t}\right)}{\partial\rho v_{d}}, & \frac{\partial\left(\rho e_{t}\right)}{\partial P}, & \frac{\partial\left(\rho e_{t}\right)}{\partial C_{1}}, & \ldots, & \frac{\partial\left(\rho e_{t}\right)}{\partial C_{n_{s}}}\end{array}\right)^{T},\label{eq:total-energy-derivative}
\end{align}
where
\[
\frac{\partial\left(\rho e_{t}\right)}{\partial\rho v_{k}}=v_{k},\quad\frac{\partial\left(\rho e_{t}\right)}{\partial P}=\frac{\rho c_{v}}{R^{0}\sum_{i}C_{i}},\quad\frac{\partial\left(\rho e_{t}\right)}{\partial C_{i}}=W_{i}u_{i}-\frac{\rho c_{v}P}{R^{0}\left(\sum_{j}C_{j}\right)^{2}}-\frac{W_{i}}{2}\bm{v}\cdot\bm{v},
\]
the derivation of which is provided in~\ref{sec:appendix-derivative-total-energy}.
$\alpha$ is a scalar quantity computed from the following constraint
that enforces (semi-discrete) conservation of total energy:
\begin{align}
\sum_{k=1}^{n_{b}}\widehat{\bm{w}}_{k}^{T}\bm{\mathcal{R}}_{k} & =\sum_{k=1}^{N}\left[\widehat{\bm{w}}_{k}^{\cdot T}\widetilde{\bm{\mathcal{R}}}_{k}+\alpha\widehat{\bm{w}}_{k}^{\cdot T}\left(\widehat{\bm{w}}_{k}-\overline{\bm{w}}\right)\right]=\oint_{\partial\kappa}\mathcal{F}_{\rho e_{t}}^{\dagger}\left(\bm{y}^{+},\bm{y}^{-},\bm{n}\right)ds,\label{eq:total-energy-correction-constraint}
\end{align}
where $\mathcal{F}_{\rho e_{t}}^{\dagger}\left(y^{+},y^{-},n\right)$
is a Lax-Friedrichs-type total-energy flux,
\begin{equation}
\mathcal{F}_{\rho e_{t}}^{\dagger}\left(\bm{y}^{+},\bm{y}^{-},\bm{n}\right)=\average{\bm{\mathcal{F}}_{\rho e_{t}}\left(\bm{y}\right)}\cdot\bm{n}+\frac{1}{2}\lambda\left\llbracket \rho e_{t}\right\rrbracket .\label{eq:numerical-flux-energy-lax-friedrichs}
\end{equation}
Defining 
\begin{equation}
\mathcal{E}=\oint_{\partial\kappa}\mathcal{F}_{\rho e_{t}}^{\dagger}\left(\bm{y}^{+},\bm{y}^{-},\bm{n}\right)ds-\sum_{k=1}^{n_{b}}\widehat{\bm{w}}_{k}^{\cdot T}\widetilde{\bm{\mathcal{R}}}_{k}\label{eq:total-energy-consistency-error}
\end{equation}
and solving for $\alpha$ yields
\begin{align*}
\alpha & =\frac{\mathcal{E}}{\sum_{k=1}^{n_{b}}\widehat{\bm{w}}_{k}^{\cdot T}\left(\widehat{\bm{w}}_{k}-\overline{\bm{w}}\right)}\\
 & =\frac{\mathcal{E}}{\sum_{k=1}^{n_{b}}\left(\widehat{\bm{w}}_{k}-\overline{\bm{w}}\right)^{T}\left(\widehat{\bm{w}}_{k}-\overline{\bm{w}}\right)},
\end{align*}
where the second line is due to
\begin{align*}
\sum_{k=1}^{n_{b}}\left(\widehat{\bm{w}}_{k}-\overline{\bm{w}}\right)^{\cdot T}\left(\widehat{\bm{w}}_{k}-\overline{\bm{w}}\right) & =\sum_{k=1}^{n_{b}}\left(\widehat{\bm{w}}_{k}^{T}\widehat{\bm{w}}_{k}-\widehat{\bm{w}}_{k}^{T}\overline{\bm{w}}-\overline{\bm{w}}^{T}\widehat{\bm{w}}_{k}+\overline{\bm{w}}^{T}\overline{\bm{w}}\right)\\
 & =\sum_{k=1}^{n_{b}}\widehat{\bm{w}}_{k}^{\cdot T}\left(\widehat{\bm{w}}_{k}-\overline{\bm{w}}\right)-\overline{\bm{w}}^{T}n_{b}\overline{\bm{w}}+n_{b}\overline{\bm{w}}^{T}\overline{\bm{w}}\\
 & =\sum_{k=1}^{n_{b}}\widehat{\bm{w}}_{k}^{T}\left(\widehat{\bm{w}}_{k}-\overline{\bm{w}}\right),
\end{align*}
such that $\sum_{k=1}^{n_{b}}\widehat{\bm{w}}_{k}^{\cdot T}\left(\widehat{\bm{w}}_{k}-\overline{\bm{w}}\right)\geq0$.
Therefore, the denominator of $\alpha$ is nonnegative and vanishes
only if $\widehat{\bm{w}}_{k}=\overline{\bm{w}},\;k=1,\ldots,n_{b}$
(i.e., the element-local solution is constant), which will be further
discussed later in this section. 

$\widehat{\bm{w}}$ is computed via quadrature-based $L^{2}$ projection.
We adopt similar nomenclature to that in~\citep{Cha18_2}. Let $\left\{ \bm{x}_{q,i}\right\} _{i=1}^{n_{q}}$
and $\left\{ \omega_{q,i}\right\} _{i=1}^{n_{q}}$ denote the points
and weights of a quadrature rule with $n_{q}$ points, where, for
simplicity, it is assumed that the determinant of the geometric Jacobian
is absorbed into the weights. We define $\bm{W}\in\mathbb{R}^{n_{q}\times n_{q}}$
as the diagonal matrix with the quadrature weights along the diagonal
(i.e., $W_{ij}=\omega_{q,i}\delta_{ij}$). We also introduce the quadrature
interpolation matrix,$\bm{V}_{q}\in\mathbb{R}^{n_{q}\times n_{b}}$,
defined as
\[
\left(V_{q}\right)_{ij}=\phi_{j}\left(\bm{x}_{q,i}\right),
\]
as well as the vector of the values of $\bm{w}$ at the quadrature
points, $\bm{w}_{q}=\bm{w}\left(\bm{y}_{q}\right)$, where $\bm{y}_{q}=\bm{V}_{q}\widehat{\bm{y}}$.
$\widehat{\bm{w}}$ is then obtained as
\begin{equation}
\widehat{\bm{w}}=\bm{\mathcal{M}}^{-1}\bm{V}_{q}^{T}\bm{W}\bm{w}_{q}.\label{eq:L2-projection-quadrature}
\end{equation}
Note that with colocated quadrature and a lumped mass matrix, Equation~(\ref{eq:L2-projection-quadrature})
recovers the interpolant of $\bm{w}$. Using this notation and assuming
continuity in time, we have~\citep{Cha18_2}
\begin{equation}
\begin{aligned}\widehat{\bm{w}}^{T}\bm{\mathcal{M}}\frac{d\widehat{\bm{y}}}{dt} & =\bm{w}_{q}^{T}\bm{W}\bm{V}_{q}^{T}\bm{\mathcal{M}}^{-1}\bm{\mathcal{M}}\frac{d\widehat{\bm{y}}}{dt}=\bm{w}_{q}^{T}\bm{W}\bm{V}_{q}^{T}\frac{d\widehat{\bm{y}}}{dt}=\bm{w}_{q}^{T}\bm{W}\frac{d\bm{y}_{q}}{dt}\\
 & =\sum_{i=1}^{n_{q}}\omega_{q,i}\bm{w}\left(\bm{y}\left(\bm{x}_{q,i}\right)\right)\cdot\left.\frac{\partial\bm{y}}{\partial t}\right|_{\bm{x}_{q,i}}=\sum_{i=1}^{n_{q}}\omega_{i}\left.\frac{\partial\rho e_{t}}{\partial t}\right|_{\bm{x}_{q,i}}=\bm{1}^{T}\bm{W}\frac{d\rho e_{t}\left(\bm{y}_{q}\right)}{dt}\\
 & \approx\int_{\kappa}\frac{\partial\rho e_{t}}{\partial t}dx,
\end{aligned}
\label{eq:total-energy-integral-quadrature}
\end{equation}
where, as previously mentioned, $\bm{\mathcal{M}}d_{t}\widehat{\bm{y}}$
is understood to represent the matrix-vector product between the mass
matrix and each of the $m$ components of $d_{t}\widehat{\bm{y}}$
(and similarly for related operations).

In this work, $\widehat{\bm{w}}$ is obtained using quadrature, but
the integrals in Equations~(\ref{eq:uncorrected-residual}) and~(\ref{eq:total-energy-consistency-error})
are computed with a quadrature-free approach~\citep{Atk96,Atk98},
although those integrals can of course be evaluated instead using
quadrature~\citep{Abg18,Abg22,Abg23}. Henceforth, it is implicitly
understood that those integrals (e.g., $\oint_{\partial\kappa}\mathcal{F}_{\rho e_{t}}^{\dagger}\left(\bm{y}^{+},\bm{y}^{-},\bm{n}\right)ds$)
are calculated using numerical integration, whether quadrature-based
or quadrature-free. The reason we explicitly write out the matrices
associated with the above quadrature-based $L^{2}$ projection is
their importance in obtaining a discrete approximation of $\int_{\kappa}\partial_{t}\left(\rho e_{t}\right)dx$
that is subject only to quadrature errors (assuming continuity in
time), which can be controlled via the chosen quadrature rule. Note
that any integration errors in the uncorrected residual~(\ref{eq:uncorrected-residual})
will be absorbed into the correction term. We then have the following
proposition.

\begin{prop}
Assuming continuity in time, the corrected semi-discretization~(\ref{eq:DG-semidiscretization-with-corrected-residual})
satisfies
\begin{equation}
\bm{1}^{T}\bm{W}\frac{d\rho e_{t}\left(\bm{y}_{q}\right)}{dt}+\oint_{\partial\kappa}\mathcal{F}_{\rho e_{t}}^{\dagger}\left(\bm{y}^{+},\bm{y}^{-},\bm{n}\right)ds=0,\label{eq:total-energy-conservation-discrete-local}
\end{equation}
which represents an approximation (subject solely to quadrature error
associated with the first term) of the following statement of local
conservation of total energy:

\[
\int_{\kappa}\frac{\partial\rho e_{t}}{\partial t}dx+\oint_{\partial\kappa}\mathcal{F}_{\rho e_{t}}^{\dagger}\left(\bm{y}^{+},\bm{y}^{-},\bm{n}\right)ds=0.
\]
\end{prop}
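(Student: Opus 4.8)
The plan is to contract the corrected semi-discretization~(\ref{eq:DG-semidiscretization-with-corrected-residual}) with the projected coefficients $\widehat{\bm{w}}$ of the total-energy derivative and then chain together the two ingredients already assembled above: the constraint~(\ref{eq:total-energy-correction-constraint}) that \emph{defines} the correction coefficient $\alpha$, and the quadrature identity~(\ref{eq:total-energy-integral-quadrature}) relating the $\widehat{\bm{w}}$-weighted inner product against $\bm{\mathcal{M}}d_{t}\widehat{\bm{y}}$ to the (quadrature-approximated) time derivative of total energy. Since $\alpha$ was constructed precisely so that this contraction reproduces the boundary energy flux, the algebraic content of the proof reduces to combining these two identities.

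First I would rewrite~(\ref{eq:DG-semidiscretization-with-corrected-residual}) as $\bm{\mathcal{M}}d_{t}\widehat{\bm{y}}=-\bm{\mathcal{R}}$ and contract against $\widehat{\bm{w}}$, that is, form $\sum_{k=1}^{n_{b}}\widehat{\bm{w}}_{k}^{T}\left(\cdot\right)_{k}$ over the $n_{b}$ basis coefficients (each term itself an inner product over the $m$ state components), to obtain
\[
\widehat{\bm{w}}^{T}\bm{\mathcal{M}}\frac{d\widehat{\bm{y}}}{dt}=-\sum_{k=1}^{n_{b}}\widehat{\bm{w}}_{k}^{T}\bm{\mathcal{R}}_{k}.
\]
By the defining constraint~(\ref{eq:total-energy-correction-constraint}), the right-hand sum equals the boundary energy flux, so the right-hand side becomes $-\oint_{\partial\kappa}\mathcal{F}_{\rho e_{t}}^{\dagger}\left(\bm{y}^{+},\bm{y}^{-},\bm{n}\right)ds$. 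For the left-hand side, I would apply~(\ref{eq:total-energy-integral-quadrature}), under the stated continuity-in-time assumption, to replace $\widehat{\bm{w}}^{T}\bm{\mathcal{M}}d_{t}\widehat{\bm{y}}$ by $\bm{1}^{T}\bm{W}\,d_{t}\rho e_{t}\left(\bm{y}_{q}\right)$. Equating the two sides yields~(\ref{eq:total-energy-conservation-discrete-local}) exactly. The approximation claim then follows because~(\ref{eq:total-energy-integral-quadrature}) shows $\bm{1}^{T}\bm{W}\,d_{t}\rho e_{t}\left(\bm{y}_{q}\right)$ differs from $\int_{\kappa}\partial_{t}\left(\rho e_{t}\right)dx$ only by the quadrature error of the chosen rule, so that error is confined to the first, volumetric term and the flux balance is otherwise exact.

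I expect the only genuine subtlety to be the well-posedness of the construction rather than the algebra: the contraction argument presumes $\alpha$ is well-defined, and as noted after~(\ref{eq:total-energy-consistency-error}) the denominator $\sum_{k=1}^{n_{b}}\left(\widehat{\bm{w}}_{k}-\overline{\bm{w}}\right)^{T}\left(\widehat{\bm{w}}_{k}-\overline{\bm{w}}\right)$ is strictly positive unless every $\widehat{\bm{w}}_{k}$ equals $\overline{\bm{w}}$, i.e., the element-local projection of $\bm{w}$ is constant. The proposition therefore tacitly excludes the elementwise-constant case (flagged for separate treatment later), and I would first confirm that~(\ref{eq:total-energy-correction-constraint}) is solvable for $\alpha$ in the nonconstant case before invoking it. I would also make explicit that the contraction with $\widehat{\bm{w}}^{T}$ is understood component-by-component over the $m$ state variables, mirroring the convention that $\bm{\mathcal{M}}d_{t}\widehat{\bm{y}}$ denotes $m$ separate matrix-vector products, so that~(\ref{eq:total-energy-integral-quadrature}) applies termwise.
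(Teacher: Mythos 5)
Your proposal is correct and follows essentially the same route as the paper's proof: contract the corrected semi-discretization with $\widehat{\bm{w}}$, invoke the defining constraint~(\ref{eq:total-energy-correction-constraint}) for the residual side, and apply the quadrature identity~(\ref{eq:total-energy-integral-quadrature}) for the time-derivative side. Your added remarks on the well-posedness of $\alpha$ and the componentwise interpretation of the contraction are consistent with the paper's surrounding discussion but do not change the argument.
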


\begin{proof}
We take the dot product of $\widehat{\bm{w}}$ with both sides of
the corrected semi-discrete form~(\ref{eq:DG-semidiscretization-with-corrected-residual}).
By Equation~(\ref{eq:total-energy-integral-quadrature}) and since
$\alpha$ is computed such that Equation~(\ref{eq:total-energy-correction-constraint})
is satisfied, we have
\[
\widehat{\bm{w}}^{T}\bm{\mathcal{M}}d_{t}\widehat{\bm{y}}=\bm{1}^{T}\bm{W}\frac{d\rho e_{t}\left(\bm{y}_{q}\right)}{dt}
\]
 and
\[
\sum_{k=1}^{n_{b}}\widehat{\bm{w}}_{k}^{T}\bm{\mathcal{R}}_{k}=\oint_{\partial\kappa}\mathcal{F}_{\rho e_{t}}^{\dagger}\left(\bm{y}^{+},\bm{y}^{-},\bm{n}\right)ds,
\]
which yields Equation~(\ref{eq:total-energy-conservation-discrete-local}).
\end{proof}
\begin{rem}
The correction terms were originally introduced in the context of
steady problems (i.e., no time derivative)~\citep{Abg18}. \citep{Abg22,Abg23}
considered unsteady problems and, at least in the DG context, employed
diagonal mass matrices, where $\widehat{\bm{w}}$ was taken to be
the interpolant of $\bm{w}$ at the nodes. Taking $\widehat{\bm{w}}$
to be the quadrature-based $L^{2}$ projection~(\ref{eq:L2-projection-quadrature})
of $\bm{w}$ thus represents a slight generalization to dense mass
matrices and overintegration.
\end{rem}

\begin{rem}
\label{rem:fully-discrete-conservation-time-integration}In the present
work, we employ standard explicit Runge-Kutta (RK) schemes for time
integration, so fully discrete conservation of total energy is not
achieved. However, the error in total-energy conservation should converge
with the design-order accuracy of the RK scheme. Fully discrete conservation
of total energy can be obtained using implicit methods~\citep{Nor13}
and/or the relaxation approach~\citep{Ran20,Ran20_2}, which can
be applied to explicit RK and multistep schemes. Future work will
explore the importance of fully discrete (as opposed to semidiscrete)
conservation of total energy, especially in shocked flows. Note that
encouraging results for a variety of shock-tube problems were reported
in~\citep{Abg23} with only semidiscrete total-energy conservation,
although small time-step sizes (with explicit RK time integrators)
were potentially used.
\end{rem}

\begin{rem}
\label{rem:correction-term-accuracy}The theoretical order of accuracy
of the correction terms was assessed in~\citep{Abg18}, at least
for steady problems. It was found that there may be slight degradation
of accuracy, although in the specific case of DG schemes where the
correction term is constructed to enforce entropy conservation, design-order
accuracy can be restored. Furthermore, according to Chen and Shu~\citep{Che20},
degeneracy of accuracy may occur in the case of extremely small (relative
to $\mathcal{E}$) correction-term denominator, which is $\mathcal{O}\left(h^{2}\right)$
(where $h$ is the element length scale) due to $\left(\widehat{\bm{w}}_{k}-\overline{\bm{w}}\right)=\mathcal{O}\left(h\right)$,
since it was noted in their error analysis that the numerator and
denominator of the correction term are not necessarily related; see~\citep[Section 5.3]{Che20}
for more details.  Regardless, optimal accuracy has been reported
in various numerical experiments~\citep{Abg18,Abg22,Abg23,Che20},
including those in which the correction term enforces total-energy
conservation~\citep{Abg23} (as opposed to entropy conservation/stability).
Throughout this work, to account for zero or extremely small denominator,
we set $\alpha$ to zero if the denominator is less than $10^{-7}$,
although it is likely that appropriate values of this tolerance are
somewhat dependent on the problem and configuration.
\end{rem}

\subsubsection{Preservation of velocity and pressure equilibria}

In the following proposition, we show that the above DG scheme preserves
neither velocity equilibrium nor pressure equilibrium.
\begin{prop}
\label{prop:PEP-VEP-original-correction}The DG semi-discretization~(\ref{eq:DG-semidiscretization-with-corrected-residual})
with correction term~(\ref{eq:correction-term-original}) is neither
velocity-equilibrium-preserving nor pressure-equilibrium-preserving.
It also does not preserve zero species concentrations.
\end{prop}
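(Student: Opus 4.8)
The strategy rests on a clean separation. Lemma~\ref{lem:velocity-equilibrium-preservation-standard-dg} and Lemma~\ref{lem:dg-uncorrected-PEP} already establish that the \emph{uncorrected} residual $\widetilde{\bm{\mathcal{R}}}$ preserves both velocity and pressure equilibria, and since the species equations carry no source, it also preserves zero species concentrations. Hence any loss of these properties in the corrected scheme~(\ref{eq:DG-semidiscretization-with-corrected-residual}) must originate entirely from the correction term $\bm{r}_{k}=\alpha(\widehat{\bm{w}}_{k}-\overline{\bm{w}})$ of~(\ref{eq:correction-term-original}). The plan is therefore to evaluate $\bm{w}=\partial_{\bm{y}}(\rho e_{t})$ from~(\ref{eq:total-energy-derivative}) in the constant-$(P_{0},\bm{v}_{0})$ state, show that its pressure and species components are generically non-constant across an element so that $\widehat{\bm{w}}_{k}-\overline{\bm{w}}$ does not vanish in those slots, and confirm that the scalar $\alpha$ is generically nonzero.

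First I would isolate the momentum slot. Since $\partial(\rho e_{t})/\partial(\rho v_{k})=v_{k}=v_{0}$ is constant in equilibrium, its $L^{2}$ projection is the constant $v_{0}$; with a nodal basis the coefficients satisfy $\widehat{\bm{w}}_{k}=\overline{\bm{w}}$ in the momentum components, so the correction leaves the momentum residual untouched. By contrast, at a material interface held at fixed $P_{0}$ and $\bm{v}_{0}$ the composition---and therefore $\rho$, $c_{v}$, $\sum_{i}C_{i}$, and the $u_{i}$---varies spatially, precisely the feature that makes multicomponent pressure equilibrium delicate. Consequently the pressure component $\rho c_{v}/(R^{0}\sum_{i}C_{i})$ and the species components $W_{i}u_{i}-\rho c_{v}P/(R^{0}(\sum_{j}C_{j})^{2})-\tfrac{1}{2}W_{i}\bm{v}\cdot\bm{v}$ of $\bm{w}$ are genuinely non-constant, so $\widehat{\bm{w}}_{k}-\overline{\bm{w}}$ is nonzero in the pressure and species slots. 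Combined with a nonvanishing $\alpha$---whose denominator is strictly positive whenever $\bm{w}$ is non-constant and whose numerator $\mathcal{E}$ of~(\ref{eq:total-energy-consistency-error}) is generically nonzero---the correction injects a nonzero source into both the pressure and species equations.

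The three conclusions then follow. For pressure equilibrium, the nonzero pressure slot of $\bm{r}_{k}$ forces $\partial_{t}P\neq0$. For velocity equilibrium, I would use that the momentum correction vanishes while the species corrections perturb $\partial_{t}\rho=\sum_{i}W_{i}\partial_{t}C_{i}$; substituting into the velocity-evolution equation~(\ref{eq:velocity-evolution-equation}), $\partial_{t}\bm{v}=\rho^{-1}\partial_{t}(\rho\bm{v})-(\bm{v}/\rho)\partial_{t}\rho$, the unbalanced density contribution yields $\partial_{t}\bm{v}\neq0$. For zero species concentrations, I would set $C_{j}\equiv0$ and observe that its component of $\bm{w}$, namely $W_{j}u_{j}-\rho c_{v}P/(R^{0}(\sum_{l}C_{l})^{2})-\tfrac{1}{2}W_{j}\bm{v}\cdot\bm{v}$, remains nonzero and non-constant, so the correction sources the otherwise-inert $C_{j}$ equation and $\partial_{t}C_{j}\neq0$.

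The main obstacle is rigor about non-vanishing: the argument above is generic, but to prove a negative it is cleanest to exhibit an explicit minimal counterexample rather than invoke genericity. I would take a single element carrying a two-species interface with a simple nonconstant mole-fraction polynomial at fixed $P_{0},\bm{v}_{0}$, evaluate $\widehat{\bm{w}}$, $\overline{\bm{w}}$, and $\mathcal{E}$ directly, and verify that $\alpha\neq0$ and that the pressure and species correction components are nonzero, thereby simultaneously establishing all three failures.
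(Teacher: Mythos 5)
Your proposal is correct and follows essentially the same route as the paper: reduce to the correction term via Lemmas~\ref{lem:velocity-equilibrium-preservation-standard-dg} and~\ref{lem:dg-uncorrected-PEP}, then observe that the momentum component of $\bm{w}$ is the constant $\bm{v}_{0}$ while the pressure and species components of $\bm{w}$ are generically non-constant (and nonzero even for a vanishing species), so the correction sources the pressure and concentration equations and, through $\partial_{t}\rho$, the velocity. The only difference is cosmetic: you propose an explicit single-element counterexample to pin down the genericity, whereas the paper is content with the ``in general does not vanish'' argument.
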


\begin{proof}
We know from Lemmas~\ref{lem:velocity-equilibrium-preservation-standard-dg}
and~\ref{lem:dg-uncorrected-PEP} that the uncorrected scheme preserves
velocity and pressure equilibria. Therefore, we only need to analyze
the correction term~(\ref{eq:correction-term-original}). First,
we focus on pressure equilibrium. The correction term can lead to
$\partial_{t}P\neq0$ since $w_{P}=\frac{\partial\left(\rho e_{t}\right)}{\partial P}=\frac{\rho c_{v}}{R^{0}\sum_{i}C_{i}}$
is nonzero even in the case of constant pressure and velocity. From
Equation~(\ref{eq:velocity-evolution-equation}), the contribution
of the correction term to the semi-discrete evolution of velocity
is given by

\begin{equation}
\frac{1}{\rho}\bm{r}_{\rho\bm{v}}-\alpha\frac{\bm{v}_{0}}{\rho}\sum_{i=1}^{n_{s}}W_{i}\bm{r}_{C_{i}}=\alpha\frac{1}{\rho}\left(\widehat{\bm{w}}_{\rho\bm{v}}-\overline{\bm{w}}_{\rho\bm{v}}\right)-\alpha\frac{\bm{v}_{0}}{\rho}\sum_{i=1}^{n_{s}}W_{i}\left(\widehat{\bm{w}}_{C_{i}}-\overline{\bm{w}}_{C_{i}}\right),\label{eq:correction-term-velocity-equilibrium-contribution}
\end{equation}
where $\left(\cdot\right)_{\rho\bm{v}}$ denotes the momentum component
and $\left(\cdot\right)_{C_{i}}$ denotes the component corresponding
to the $i$th species. Given the definition of $\bm{w}$~(\ref{eq:total-energy-derivative}),
the RHS in general does not vanish, which leads to $\partial_{t}\bm{v}\neq0$.
Similarly, since $w_{C_{i}}$ can be nonzero even in the case of zero
$C_{i}$, the correction term does not preserve zero species concentrations.
\end{proof}
\begin{rem}
Since the correction term fails to preserve zero species concentrations,
it can lead to spurious production and destruction of species (including
negative concentrations). Note that zero species concentrations would
not be preserved even if the correction term is constructed to enforce
entropy conservation in a total-energy-based DG scheme~(\ref{eq:dg-mass-matrix-form-conservative}).
However, since the correction term is also conservative, the element-local
average of the species concentration remains zero. As such, bound-preserving
limiters~\citep{Wan12,Zha10,Chi22} can be employed to restore zero
species concentrations. Unfortunately, these limiters, which are fully
conservative in a total-energy-based formulation, are quasi-conservative
in a pressure-based formulation since total energy is a nonlinear
function of the state variables.
\end{rem}

\subsection{Correction term: Modified formulation}

\label{subsec:correction-term-modified}

We now propose simple modifications to the correction terms that,
in the case of pressure and velocity equilibrium, address the issues
discussed in the previous subsection. The first modification is to
change the correction term as
\begin{equation}
\bm{r}_{k}=\alpha\left(\widehat{\bm{z}}_{k}-\overline{\bm{z}}\right),\label{eq:correction-term-modified}
\end{equation}
where $\bm{z}$ is a different set of auxiliary variables. Specifically,
we set
\begin{align}
\bm{z} & =\left(v_{1}\sum_{i=1}^{n_{s}}W_{i}\frac{\partial\left(\rho e_{t}\right)}{\partial C_{i}}\begin{array}{ccccccc}
, & \ldots, & v_{d}\sum_{i=1}^{n_{s}}W_{i}\frac{\partial\left(\rho e_{t}\right)}{\partial C_{i}}, & P, & \frac{\partial\left(\rho e_{t}\right)}{\partial C_{1}}, & \ldots, & \frac{\partial\left(\rho e_{t}\right)}{\partial C_{n_{s}}}\end{array}\right)^{T},\label{eq:correction-variables-new}
\end{align}
where the momentum and pressure components are different from those
of $\bm{w}$. $\alpha$ is now computed as
\begin{align*}
\alpha & =\frac{\mathcal{E}}{\sum_{k=1}^{n_{b}}\widehat{\bm{w}}_{k}^{\cdot T}\left(\widehat{\bm{z}}_{k}-\overline{\bm{z}}\right)}\\
 & =\frac{\mathcal{E}}{\sum_{k=1}^{n_{b}}\left(\widehat{\bm{w}}_{k}-\overline{\bm{w}}\right)^{T}\left(\widehat{\bm{z}}_{k}-\overline{\bm{z}}\right)},
\end{align*}
where the second line is again due to
\begin{align*}
\sum_{k=1}^{n_{b}}\left(\widehat{\bm{w}}_{k}-\overline{\bm{w}}\right)^{\cdot T}\left(\widehat{\bm{z}}_{k}-\overline{\bm{z}}\right) & =\sum_{k=1}^{n_{b}}\left(\widehat{\bm{w}}_{k}^{T}\widehat{\bm{z}}_{k}-\widehat{\bm{w}}_{k}^{T}\overline{\bm{z}}-\overline{\bm{w}}^{T}\widehat{\bm{z}}_{k}+\overline{\bm{w}}^{T}\overline{\bm{z}}\right)\\
 & =\sum_{k=1}^{n_{b}}\widehat{\bm{w}}_{k}^{\cdot T}\left(\widehat{\bm{z}}_{k}-\overline{\bm{z}}\right)-\overline{\bm{w}}^{T}n_{b}\overline{\bm{z}}+n_{b}\overline{\bm{w}}^{T}\overline{\bm{z}}\\
 & =\sum_{k=1}^{n_{b}}\widehat{\bm{w}}_{k}^{T}\left(\widehat{\bm{z}}_{k}-\overline{\bm{z}}\right).
\end{align*}

\begin{rem}
In the case of pressure and velocity equilibria, the denominator is
still guaranteed to be nonnegative. To show this, we define $\Delta\left(\cdot\right)=\widehat{\left(\cdot\right)}-\overline{\left(\cdot\right)}$
and note that $\Delta\bm{w}_{\rho\bm{v}}=0$ since $\bm{w}_{\rho\bm{v}}=\bm{v}.$
In addition, $\Delta\bm{z}_{P}=0$. Therefore,
\[
\Delta\bm{w}^{T}\Delta\bm{z}=\sum_{i=1}^{n_{s}}\left|\Delta\left(\frac{\partial\rho e_{t}}{\partial C_{i}}\right)\right|^{2}\geq0.
\]
$\Delta\bm{w}^{T}\Delta\bm{z}$ is zero if and only if $\widehat{\frac{\partial\rho e_{t}}{\partial C_{i}}}=\overline{\frac{\partial\rho e_{t}}{\partial C_{i}}}$
for all $i$. Note that, in principle, it is possible to have $\widehat{\frac{\partial\rho e_{t}}{\partial C_{i}}}=\overline{\frac{\partial\rho e_{t}}{\partial C_{i}}}$
even if the full state is non-constant, given the definition of $\frac{\partial\rho e_{t}}{\partial C_{i}}$.
However, this seems to be extremely rare. 

At this point, the only difference between the original and modified
specifications of $\alpha$ is in the denominator, where
\[
\Delta\bm{w}^{T}\Delta\bm{w}-\Delta\bm{w}^{T}\Delta\bm{z}=\left|\Delta\left(\frac{\partial\left(\rho e_{t}\right)}{\partial P}\right)\right|^{2}=\left|\Delta\left(\frac{\rho c_{v}}{R^{0}\sum_{i}C_{i}}\right)\right|^{2}.
\]
However, in the absence of pressure and velocity equilibria, $\Delta\bm{w}^{T}\Delta\bm{z}\geq0$
is no longer guaranteed. In this work, since we focus on constant-pressure,
constant-velocity interfacial flows, this is not an issue. A potential
strategy for general flows is to switch between the original and modified
correction terms, where the latter is applied only in regions of the
flow in local pressure and velocity equilibria, which is left for
future work. Note that semi-discrete total-energy conservation would
still be satisfied everywhere.
\end{rem}

The next modification is that, in order to account for elementwise-constant
solutions with inter-element jumps, we incorporate face-based corrections
similar to those presented in~\citep{Abg23}. These face-based corrections,
which are not valid in the case of vanishing inter-element jumps (i.e.,
$\left\llbracket \bm{y}\right\rrbracket =0$), were originally introduced
to construct a scheme without any elementwise corrections; here, we
combine the two types of corrections to account for both elementwise-constant
solutions with inter-element jumps and locally varying solutions with
inter-element continuity (note that the case of elementwise-constant
solutions with inter-element continuity does not need to be addressed
since then there is no temporal change in the state). The face-based
correction is included in the numerical flux as
\begin{equation}
\bm{\mathcal{F}}^{\dagger}\left(\bm{y}^{+},\bm{y}^{-},\bm{n}\right)=\widetilde{\bm{\mathcal{F}}}^{\dagger}\left(\bm{y}^{+},\bm{y}^{-},\bm{n}\right)+\beta\left\llbracket \widehat{\bm{z}}\right\rrbracket ,\label{eq:numerical-flux-modified}
\end{equation}
where $\widetilde{\bm{\mathcal{F}}}^{\dagger}\left(\bm{y}^{+},\bm{y}^{-},\bm{n}\right)$
is the Lax-Friedrichs numerical flux~(\ref{eq:lax-friedrichs}) and
$\beta$ is a coefficient prescribed as
\[
\beta=\frac{-\left\llbracket \bm{\mathcal{F}}_{\rho e_{t}}\left(\bm{y}\right)\right\rrbracket \cdot\bm{n}-\left\llbracket \widehat{\bm{w}}\right\rrbracket ^{T}\widetilde{\bm{\mathcal{F}}}^{\dagger}\left(\bm{y}^{+},\bm{y}^{-},\bm{n}\right)+\left\llbracket \widehat{\bm{w}}^{T}\bm{\mathcal{F}}\left(\bm{y}\right)\right\rrbracket \cdot\bm{n}}{\left\llbracket \widehat{\bm{w}}\right\rrbracket ^{T}\left\llbracket \widehat{\bm{z}}\right\rrbracket }.
\]
The original face-based corrections used $\bm{w}$ in~(\ref{eq:numerical-flux-modified}),
which we replace here with $\bm{z}$, again to preserve velocity and
pressure equilibria. Similar to $\alpha$, in the case of constant
pressure and velocity, the denominator of $\beta$ is nonnegative
and vanishes only if $\left\llbracket \frac{\partial\rho e_{t}}{\partial C_{i}}\right\rrbracket =0$,
for all $i$. The numerical total-energy flux in Equation~(\ref{eq:total-energy-consistency-error})
is also modified as
\begin{equation}
\mathcal{F}_{\rho e_{t}}^{\dagger}\left(\bm{y}^{+},\bm{y}^{-},\bm{n}\right)=\average{\bm{\mathcal{F}}_{\rho e_{t}}\left(\bm{y}\right)}\cdot\bm{n}-\average{\bm{w}^{T}\bm{\mathcal{F}}\left(\bm{y}\right)}\cdot\bm{n}+\average{\bm{w}}^{T}\cdot\bm{\mathcal{F}}^{\dagger}\left(\bm{y}^{+},\bm{y}^{-},\bm{n}\right),\label{eq:numerical-flux-energy-modified}
\end{equation}
which remains conservative and consistent (assuming $\bm{\mathcal{F}}^{\dagger}\left(\bm{y}^{+},\bm{y}^{-},\bm{n}\right)$
is conservative and consistent). The reason for this choice of numerical
total-energy flux will be clarified later in this section. The corrected
numerical flux~(\ref{eq:numerical-flux-modified}) satisfies~\citep{Abg23}
\begin{equation}
\widehat{\bm{w}}^{+}\cdot\left[\bm{\mathcal{F}}^{\dagger}\left(\bm{y}^{+},\bm{y}^{-},\bm{n}\right)-\bm{\mathcal{F}}\left(\bm{y}^{+}\right)\cdot\bm{n}\right]-\widehat{\bm{w}}^{-}\cdot\left[\bm{\mathcal{F}}^{\dagger}\left(\bm{y}^{+},\bm{y}^{-},\bm{n}\right)-\bm{\mathcal{F}}\left(\bm{y}^{-}\right)\cdot\bm{n}\right]=-\left\llbracket \bm{\mathcal{F}}_{\rho e_{t}}\left(\bm{y}\right)\right\rrbracket \cdot\bm{n},\label{eq:numerical-flux-compatibility-condition}
\end{equation}
which will be used below. Note that we only employ the face-based
correction in the case of an elementwise constant solution (i.e.,
the elementwise correction term is not applied).  The rationale for
this is that we have found the face-based correction, in the absence
of additional stabilization, to sometimes detrimentally impact the
solution if $\beta<0$, which has an antidiffusive effect. As such,
the elementwise correction term is preferred, and the face-based correction
is applied only when necessary (though it should be noted that artificial
viscosity, limiting, or other stabilization mechanisms would likely
prevent such undesirable behavior). Furthermore, similar to $\alpha$,
we set $\beta$ to zero if $\left\llbracket \widehat{\bm{w}}\right\rrbracket ^{T}\left\llbracket \widehat{\bm{z}}\right\rrbracket <10^{-6}$
in order to avoid zero or extremely small denominator. 

\subsubsection{Preservation of velocity and pressure equilibria}

We now show that the proposed scheme preserves velocity and pressure
equilibria while maintaining semidiscrete conservation of total energy.
\begin{thm}
\label{thm:PEP-VEP-modified-correction}In the case of constant pressure
($P=P_{0}$) and constant velocity ($\bm{v}=\bm{v}_{0}$), the DG
scheme~(\ref{eq:DG-semidiscretization-with-corrected-residual})
with correction term~(\ref{eq:correction-term-modified}) preserves
velocity and pressure equilibria and (assuming continuity in time)
conserves total energy.
\end{thm}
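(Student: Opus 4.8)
The plan is to dispatch the three assertions---velocity-equilibrium preservation, pressure-equilibrium preservation, and semidiscrete total-energy conservation---one at a time, using the observation that the first two are structural consequences of the choice of auxiliary variables $\bm{z}$ in~(\ref{eq:correction-variables-new}) that hold for \emph{any} value of $\alpha$, whereas the third is enforced by the very definition of $\alpha$.

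For the velocity equilibrium, I would begin from Lemma~\ref{lem:velocity-equilibrium-preservation-standard-dg}, which already guarantees $\partial_{t}\bm{v}=0$ for the uncorrected residual $\widetilde{\bm{\mathcal{R}}}$, so that only the correction term remains to be controlled. Following the contribution formula of Proposition~\ref{prop:PEP-VEP-original-correction} but with $\bm{z}$ replacing $\bm{w}$, the net correction to $\partial_{t}v_{j}$ is proportional to $\bm{r}_{\rho v_{j}}-v_{0,j}\sum_{i}W_{i}\bm{r}_{C_{i}}$. The decisive step is to substitute~(\ref{eq:correction-variables-new}): because $\bm{v}_{0}$ is constant, the momentum component obeys $\widehat{\bm{z}}_{\rho v_{j}}=v_{0,j}\,\widehat{g}$ with $g=\sum_{i}W_{i}\,\partial_{C_{i}}(\rho e_{t})$, while linearity of the $L^{2}$ projection yields $\sum_{i}W_{i}\widehat{\bm{z}}_{C_{i}}=\widehat{g}$; subtracting the corresponding means shows that the two contributions cancel coefficientwise, so $\partial_{t}\bm{v}=0$. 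This is exactly the role played by replacing the momentum entry $v_{j}$ of $\bm{w}$ with $v_{j}g$ in $\bm{z}$.

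For the pressure equilibrium, I would note that the pressure component of $\bm{z}$ is simply $P$, so under $P=P_{0}$ the projected field $\widehat{\bm{z}}_{P}$ is the projection of a constant and therefore equals $P_{0}$ at every node; hence $\widehat{\bm{z}}_{k,P}-\overline{\bm{z}}_{P}=0$, the pressure component of the correction vanishes identically, and the evolution of $P$ is left to the uncorrected residual, which preserves pressure equilibrium by Lemma~\ref{lem:dg-uncorrected-PEP}. For total-energy conservation I would reuse the argument underlying~(\ref{eq:total-energy-conservation-discrete-local}): since $\alpha$ is defined so that the constraint~(\ref{eq:total-energy-correction-constraint}) holds for the modified correction, dotting $\widehat{\bm{w}}$ into~(\ref{eq:DG-semidiscretization-with-corrected-residual}) and invoking~(\ref{eq:total-energy-integral-quadrature}) reproduces the discrete total-energy balance under continuity in time.

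I expect no genuine obstacle, only careful bookkeeping: the cancellations hinge on the $L^{2}$ projection commuting with multiplication by the constants $v_{0,j}$ and $P_{0}$, and one must confirm that the equilibrium arguments are insensitive to whether $\alpha$ is taken from its formula or set to zero for a small denominator (in which case the correction simply disappears and the already-equilibrium-preserving uncorrected scheme governs the update). For completeness I would verify that the face-based correction $\beta\left\llbracket \widehat{\bm{z}}\right\rrbracket $ used for elementwise-constant solutions behaves identically: $\left\llbracket \widehat{\bm{z}}_{P}\right\rrbracket =0$ preserves pressure equilibrium, the momentum and concentration jumps cancel in the velocity update as above, and total-energy conservation follows from the compatibility condition~(\ref{eq:numerical-flux-compatibility-condition}) satisfied by the corrected flux.
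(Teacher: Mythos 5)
Your proposal is correct and follows essentially the same route as the paper's proof: velocity-equilibrium preservation via the coefficientwise cancellation $\Delta\bm{z}_{\rho\bm{v}}-\bm{v}_{0}\sum_{i}W_{i}\Delta\bm{z}_{C_{i}}=0$ enabled by the modified momentum entries of $\bm{z}$, pressure-equilibrium preservation from $\bm{z}_{P}=P_{0}$ being constant, total-energy conservation from the defining constraint on $\alpha$ together with~(\ref{eq:total-energy-integral-quadrature}), and the analogous treatment of the face-based correction for elementwise-constant states via~(\ref{eq:numerical-flux-compatibility-condition}). Your explicit remarks on the linearity of the $L^{2}$ projection and on the small-denominator fallback ($\alpha=0$) are sound refinements of points the paper leaves implicit.
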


\begin{proof}
We consider two cases: (a) elementwise-non-uniform solution and (b)
elementwise-constant solution. We begin with the non-uniform case.
Taking the dot product of $\widehat{\bm{w}}$ with both sides of~(\ref{eq:DG-semidiscretization-with-corrected-residual})
(with the correction term~(\ref{eq:correction-term-modified})) gives
\begin{align*}
\widehat{\bm{w}}^{T}\bm{\mathcal{M}}d_{t}\widehat{\bm{y}} & =-\sum_{k=1}^{n_{b}}\widehat{\bm{w}}_{k}^{T}\bm{\mathcal{R}}_{k}=-\sum_{k=1}^{n_{b}}\widehat{\bm{w}}_{k}^{T}\widetilde{\bm{\mathcal{R}}}_{k}-\sum_{k=1}^{n_{b}}\widehat{\bm{w}}_{k}^{T}\bm{r}_{k}\\
 & =-\sum_{k=1}^{n_{b}}\widehat{\bm{w}}^{T}\widetilde{\bm{\mathcal{R}}}-\alpha\sum_{k=1}^{n_{b}}\widehat{\bm{w}}_{k}^{T}\left(\widehat{\bm{z}}_{k}-\overline{\bm{z}}_{k}\right)\\
 & =-\sum_{k=1}^{n_{b}}\widehat{\bm{w}}_{k}^{T}\widetilde{\bm{\mathcal{R}}}_{k}-\frac{\mathcal{E}}{\sum_{k=1}^{n_{b}}\left(\widehat{\bm{w}}_{k}-\overline{\bm{w}}_{k}\right)^{T}\left(\widehat{\bm{z}}_{k}-\overline{\bm{z}}_{k}\right)}\widehat{\bm{w}}_{k}^{T}\left(\widehat{\bm{z}}_{k}-\overline{\bm{z}}_{k}\right)\\
 & =-\sum_{k=1}^{n_{b}}\widehat{\bm{w}}_{k}^{T}\widetilde{\bm{\mathcal{R}}}_{k}-\sum_{k=1}^{n_{b}}\frac{\oint_{\partial\kappa}\mathcal{F}_{\rho e_{t}}^{\dagger}\left(\bm{y}^{+},\bm{y}^{-},\bm{n}\right)ds-\sum_{k=1}^{n_{b}}\widehat{\bm{w}}_{k}^{\cdot T}\widetilde{\bm{\mathcal{R}}}_{k}}{\sum_{k=1}^{n_{b}}\left(\widehat{\bm{w}}_{k}-\overline{\bm{w}}_{k}\right)^{T}\left(\widehat{\bm{z}}_{k}-\overline{\bm{z}}_{k}\right)}\left(\widehat{\bm{w}}_{k}-\overline{\bm{w}}_{k}\right)^{T}\left(\widehat{\bm{z}}_{k}-\overline{\bm{z}}_{k}\right)\\
 & =-\oint_{\partial\kappa}\mathcal{F}_{\rho e_{t}}^{\dagger}\left(\bm{y}^{+},\bm{y}^{-},\bm{n}\right)ds,
\end{align*}
which is a semidiscrete statement of local total-energy conservation.
Due to Lemmas~\ref{lem:velocity-equilibrium-preservation-standard-dg}
and~\ref{lem:dg-uncorrected-PEP}, only the contribution of the correction
term to velocity and pressure equilibria needs to be analyzed. From
Equation~(\ref{eq:velocity-evolution-equation}), the contribution
of the correction term to the semi-discrete evolution of velocity
is given by

\begin{align*}
\frac{\alpha}{\rho}\left[\Delta\bm{z}_{\rho\bm{v}}-\bm{v}_{0}\sum_{i=1}^{n_{s}}W_{i}\Delta\bm{z}_{C_{i}}\right] & =\frac{\alpha}{\rho}\left[\Delta\left(\bm{v}\sum_{i=1}^{n_{s}}W_{i}\frac{\partial\rho e_{t}}{\partial C_{i}}\right)-\bm{v}_{0}\sum_{i=1}^{n_{s}}W_{i}\Delta\left(\frac{\partial\rho e_{t}}{\partial C_{i}}\right)\right]\\
 & =\frac{\alpha}{\rho}\left[\bm{v}_{0}\sum_{i=1}^{n_{s}}W_{i}\Delta\left(\frac{\partial\rho e_{t}}{\partial C_{i}}\right)-\bm{v}_{0}\sum_{i=1}^{n_{s}}W_{i}\Delta\left(\frac{\partial\rho e_{t}}{\partial C_{i}}\right)\right]\\
 & =0,
\end{align*}
which demonstrates velocity-equilibrium preservation. The pressure-equilibrum-preserving
property is obvious given the definition of $\bm{z}$~ (\ref{eq:correction-variables-new}).

Next, we consider the elementwise-constant case (i.e., only the face-based
correction is applied), where we draw from~\citep{Abg23}. Taking
the dot product of $\widehat{\bm{w}}$ with both sides of~(\ref{eq:DG-semidiscretization-with-uncorrected-residual})
(with the corrected numerical flux~(\ref{eq:numerical-flux-modified}))
yields
\begin{align*}
\widehat{\bm{w}}^{T}\bm{\mathcal{M}}d_{t}\widehat{\bm{y}} & =-\sum_{k=1}^{n_{b}}\widehat{\bm{w}}_{k}^{T}\widetilde{\bm{\mathcal{R}}}_{k}\\
 & =-\sum_{k=1}^{n_{b}}\widehat{\bm{w}}_{k}^{T}\oint_{\partial\kappa}\phi_{k}\bm{\mathcal{F}}^{\dagger}\left(\bm{y}^{+},\bm{y}^{-},\bm{n}\right)ds+\sum_{k=1}^{n_{b}}\widehat{\bm{w}}^{T}\int_{\kappa}\nabla\phi_{k}\cdot\bm{\mathcal{F}}\left(\bm{y}\right)dx\\
 & =-\oint_{\partial\kappa}\bm{w}^{T}\bm{\mathcal{F}}^{\dagger}\left(\bm{y}^{+},\bm{y}^{-},\bm{n}\right)ds,
\end{align*}
where the integrals corresponding to the nonconservative flux vanish,
the last line is due to the locally constant state, and (with some
abuse of notation) $\bm{w}=\sum_{k=1}^{n_{b}}\widehat{\bm{w}}_{k}\phi_{k}$.
Adding and subtracting $\oint_{\partial\kappa}\left(\bm{w}^{-}\right)^{T}\bm{\mathcal{F}}^{\dagger}\left(\bm{y}^{+},\bm{y}^{-},\bm{n}\right)ds$
yields
\[
\widehat{\bm{w}}^{T}\bm{\mathcal{M}}d_{t}\widehat{\bm{y}}=-\oint_{\partial\kappa}\frac{1}{2}(\bm{w}^{+}+\bm{w}^{-})^{T}\bm{\mathcal{F}}^{\dagger}\left(\bm{y}^{+},\bm{y}^{-},\bm{n}\right)ds-\oint_{\partial\kappa}\frac{1}{2}(\bm{w}^{+}-\bm{w}^{-})^{T}\bm{\mathcal{F}}^{\dagger}\left(\bm{y}^{+},\bm{y}^{-},\bm{n}\right)ds,
\]
which, upon applying~(\ref{eq:numerical-flux-compatibility-condition}),
can be rewritten as
\[
\widehat{\bm{w}}^{T}\bm{\mathcal{M}}d_{t}\widehat{\bm{y}}=-\frac{1}{2}\oint_{\partial\kappa}(\bm{w}^{+}+\bm{w}^{-})^{T}\bm{\mathcal{F}}^{\dagger}\left(\bm{y}^{+},\bm{y}^{-},\bm{n}\right)ds-\frac{1}{2}\oint_{\partial\kappa}\left\llbracket \bm{w}^{T}\bm{\mathcal{F}}\left(\bm{y}\right)\right\rrbracket \cdot\bm{n}ds+\frac{1}{2}\oint_{\partial\kappa}\left\llbracket \bm{\mathcal{F}}_{\rho e_{t}}\left(\bm{y}\right)\right\rrbracket \cdot\bm{n}ds.
\]
Invoking the identity 
\[
\oint_{\partial\kappa}\bm{1}\cdot\bm{n}ds=\int_{\Omega}\nabla\cdot\bm{1}dx=0,
\]
where $\bm{1}\in\mathbb{R}^{d}$ is a vector of ones, leads to
\begin{align*}
\widehat{\bm{w}}^{T}\bm{\mathcal{M}}d_{t}\widehat{\bm{y}} & =-\oint_{\partial\kappa}\average{\bm{\mathcal{F}}_{\rho e_{t}}\left(\bm{y}\right)}\cdot\bm{n}ds+\oint_{\partial\kappa}\average{\bm{w}^{T}\bm{\mathcal{F}}\left(\bm{y}\right)}\cdot\bm{n}ds-\oint_{\partial\kappa}\average{\bm{w}}^{T}\cdot\bm{\mathcal{F}}^{\dagger}\left(\bm{y}^{+},\bm{y}^{-},\bm{n}\right)ds\\
 & =-\oint_{\partial\kappa}\mathcal{F}_{\rho e_{t}}^{\dagger}\left(\bm{y}^{+},\bm{y}^{-},\bm{n}\right)ds
\end{align*}
since, given the constant state, $\oint_{\partial\kappa}\left(\bm{w}^{+}\right)^{T}\bm{\mathcal{F}}\left(\bm{y}^{+}\right)\cdot\bm{n}ds=0$
and $-\oint_{\partial\kappa}\bm{\mathcal{F}}_{\rho e_{t}}\left(\bm{y}^{+}\right)\cdot\bm{n}ds=0$.
Note that the modified numerical total-energy flux~(\ref{eq:numerical-flux-energy-modified})
is necessary to maintain local total-energy conservation between an
element with a uniform state (i.e., only the face-based correction
is applied) and an element with a non-uniform state (i.e., only the
elementwise correction is applied). From Equation~(\ref{eq:velocity-evolution-equation}),
the contribution of the face-based correction to the semi-discrete
evolution of velocity is given by
\begin{align*}
\frac{\beta}{\rho}\left(\left\llbracket \bm{z}_{\rho\bm{v}}\right\rrbracket -\bm{v}_{0}\sum_{i=1}^{n_{s}}W_{i}\left\llbracket \bm{z}_{C_{i}}\right\rrbracket \right) & =\frac{\beta}{\rho}\left(\left\llbracket \bm{v}\sum_{i=1}^{n_{s}}W_{i}\frac{\partial\rho e_{t}}{\partial C_{i}}\right\rrbracket -\bm{v}_{0}\sum_{i=1}^{n_{s}}W_{i}\left\llbracket \frac{\partial\rho e_{t}}{\partial C_{i}}\right\rrbracket \right)\\
 & =\frac{\beta}{\rho}\left(\bm{v}_{0}\sum_{i=1}^{n_{s}}W_{i}\left\llbracket \frac{\partial\rho e_{t}}{\partial C_{i}}\right\rrbracket -\bm{v}_{0}\sum_{i=1}^{n_{s}}W_{i}\left\llbracket \frac{\partial\rho e_{t}}{\partial C_{i}}\right\rrbracket \right)\\
 & =0.
\end{align*}
The pressure-equilibrum-preserving property is obvious since $\left\llbracket \bm{z}_{P}\right\rrbracket =\left\llbracket P_{0}\right\rrbracket =0.$
\end{proof}
\begin{rem}
\label{rem:correction-term-modified-accuracy-p1}Although $\Delta\bm{w}^{T}\Delta\bm{z}=\sum_{i=1}^{n_{s}}\left[\Delta\left(\frac{\partial\rho e_{t}}{\partial C_{i}}\right)\right]^{2}$
is still $\mathcal{O}\left(h^{2}\right)$ in the case of pressure
and velocity equilibria, the denominator in the modified elementwise
correction term is smaller than that in the original correction term,
which, as discussed in Remark~\ref{rem:correction-term-accuracy},
may lead to greater errors. In our numerical experiments, although
this does not seem to be a significant issue for $p\geq2$, we find
that this can sometimes be problematic for $p=1$, where there are
fewer ``degrees of freedom'' over which to distribute the correction
term to enforce satisfaction of the auxiliary transport equation.
This will be further discussed in Section~\ref{subsec:gaussian-density-wave}.
\end{rem}

\begin{rem}
\label{rem:remove-zero-species-concentration}The modified correction
term~(\ref{eq:correction-term-modified}) still fails to preserve
zero species concentrations. A simple additional modification to guarantee
such preservation is as follows: if $\overline{C}_{l}=0$, then zero
out the component of $\bm{r}_{k}$ corresponding to the $l$th species.
In the case of vanishing inter-element jumps in $C_{l}$, this is
equivalent to removing the $l$th species from the state vector, which
in general can done safely since it otherwise has no effect on the
uncorrected solution. A similar procedure can be performed for the
face-based correction.
\end{rem}

\section{Results}

\label{sec:results}

We apply the proposed formulation to a set of test cases. In the first
one, which involves a Gaussian density wave, we demonstrate optimal
convergence of the formulation. The next two consist of high-velocity
and low-velocity advection of a nitrogen/n-dodecane thermal bubble
in one dimension, where we assess the ability of the methodology to
preserve pressure equilibrium and conserve total energy. The final
two configurations are two- and three-dimensional versions of the
high-velocity thermal-bubble test case. Curved elements are also considered.
The following schemes are employed in this section:
\begin{itemize}
\item P1: pressure-based DG scheme without any correction term
\item P2: pressure-based DG scheme with the original correction term~(\ref{eq:correction-term-original})
(Section~\ref{subsec:correction-term-original})
\item P3: pressure-based DG scheme with modified corrections~(\ref{eq:correction-term-modified})
and~(\ref{eq:numerical-flux-modified}) (Section~\ref{subsec:correction-term-modified})
\item E1: total-energy-based DG scheme with overintegration
\item E2: total-energy-based DG scheme with colocated integration
\end{itemize}
The pressure-based solutions are computed with overintegration. All
solutions are initialized using interpolation and advanced in time
using the explicit, third-order, strong-stability-preserving Runge-Kutta
(SSPRK3) scheme~\citep{Got01}. The CFL is defined here as
\begin{equation}
\mathrm{CFL}=\frac{\Delta t\left(2p+1\right)}{h}\left(\left|\bm{v}\right|+c\right),\label{eq:cfl}
\end{equation}
where $h$ is an element length scale. No artificial viscosity or
limiting is applied. The governing equations are discretized in nondimensional
form~\citep{Joh22}. All simulations are performed using a modified
version of the JENRE\textregistered~Multiphysics Framework~\citep{Joh20_2}
that incorporates the extensions described in this work.

\subsection{Gaussian density wave}

\label{subsec:gaussian-density-wave}

To investigate the order of accuracy of the scheme, we consider the
advection of a multicomponent Gaussian wave based on the test in~\citep{Tro23}.
A periodic domain $\Omega=[-0.5,0.5]$ is initialized in nondimensional
form as follows:
\begin{eqnarray}
v & = & 5,\nonumber \\
Y_{1} & = & \frac{1}{2}\left[\sin\left(2\pi x\right)+1\right],\nonumber \\
Y_{2} & = & 1-Y_{1},\label{eq:Gaussian-wave}\\
\rho & = & \exp\left(-\sigma x^{2}\right)+4,\nonumber \\
P & = & 2,\nonumber 
\end{eqnarray}
where $\sigma=500$. The thermodynamic relations for the two fictitious
species considered here are given by
\begin{align*}
\frac{W_{1}c_{p,1}\left(T\right)}{R^{0}} & =3.5,\quad\frac{W_{1}h_{1}\left(T\right)}{R^{0}}=3.5T,\\
\frac{W_{2}c_{p,2}\left(T\right)}{R^{0}} & =2.491,\quad\frac{W_{2}h_{2}\left(T\right)}{R^{0}}=2.491T.
\end{align*}
Four element sizes are considered: $h$, $h/2$, $h/4$, and $h/8$,
where $h=0.02$. The solution is integrated in time using a CFL of
$0.1$ in order to ensure sufficiently low temporal errors. The $L^{2}$
error at $t=0.2$, corresponding to one advection period, is calculated
in terms of the following normalized state variables: 
\[
\underline{\rho v}_{k}=\frac{1}{\sqrt{\rho_{r}P_{r}}}\rho v_{k},\quad\underline{\zeta}=\frac{1}{P_{r}}\zeta,\quad\underline{C}_{i}=\frac{R^{0}T_{r}}{P_{r}}C_{i},
\]
where $\zeta$ is either total energy or pressure (depending on the
formulation) and $\rho_{r}=1\,\mathrm{kg\cdot}\mathrm{m}^{-3}$ and
$P_{r}=101325\,\mathrm{Pa}$ are reference values. Figure~\ref{fig:low_density_sinusoidal_convergence.}
displays the results of the convergence study from $p=1$ to $p=3$
for the corrected DG scheme with the original correction term~(\ref{eq:correction-term-original})
and the modified correction term~(\ref{eq:correction-term-modified}).
The dashed lines represent the optimal convergence rates of $p+1$.
As discussed in Remark~\ref{rem:correction-term-modified-accuracy-p1},
the smaller denominator in the modified correction term~(\ref{eq:correction-term-modified})
may lead to larger errors, which seem to manifest most dramatically
in the $p=1$ case. Here, these errors accumulate and lead to solver
divergence on the coarsest mesh, so Figure~\ref{fig:gaussian-convergence-modified}
shows results only for $h/2$, $h/4$, and $h/8$ for $p=1$. In general,
optimal order of accuracy is observed in Figure~\ref{fig:low_density_sinusoidal_convergence.}.

\begin{figure}[tbph]
\begin{centering}
\subfloat[\label{fig:gaussian-convergence-uncorrected}No correction term.]{\includegraphics[width=0.48\columnwidth]{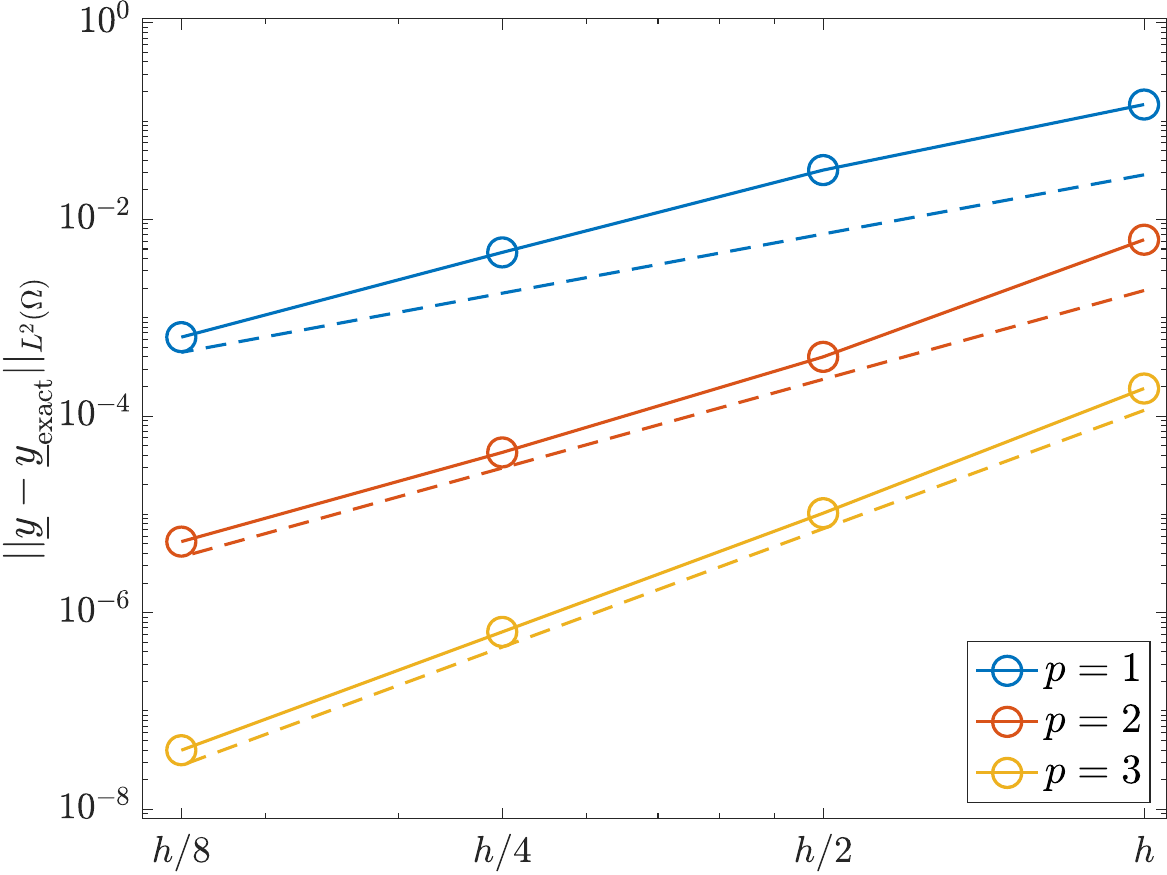}}\hfill{}\subfloat[\label{fig:gaussian-convergence-original}Original correction term~(\ref{eq:correction-term-original}).]{\includegraphics[width=0.48\columnwidth]{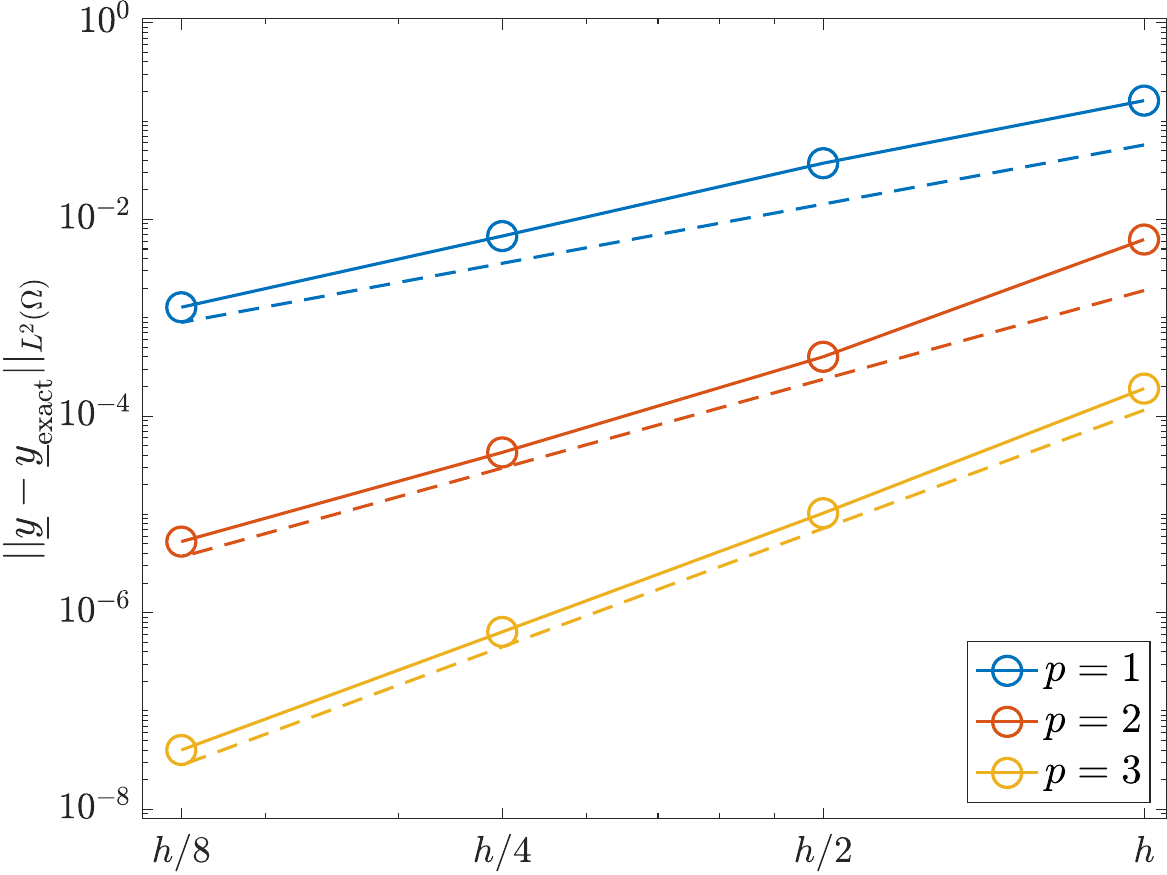}}\hfill{}\subfloat[\label{fig:gaussian-convergence-modified}Modified correction term~(\ref{eq:correction-term-modified}).]{\includegraphics[width=0.48\columnwidth]{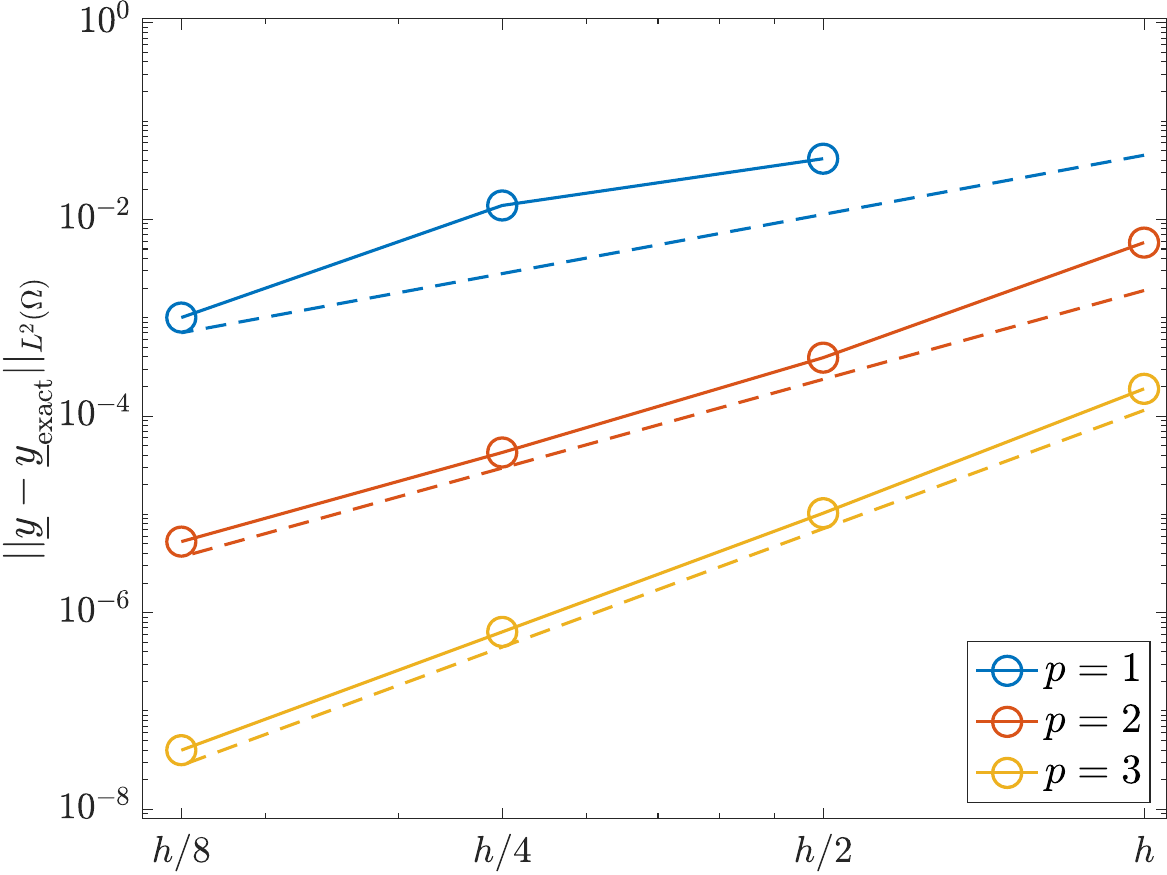}}
\par\end{centering}
\centering{}\caption{\label{fig:low_density_sinusoidal_convergence.}Convergence under
grid refinement, with $h=0.02$, for the multicomponent Gaussian-wave
test. The $L^{2}$ error of the normalized state with respect to the
exact solution at $t=1$ is computed. The dashed lines represent convergence
rates of $p+1$.}
\end{figure}

\subsection{One-dimensional, high-velocity thermal bubble}

\label{subsec:thermal-bubble-600-m-s-1D}

This section presents results for the high-velocity advection of a
high-pressure, nitrogen/n-dodecane thermal bubble. This test case
was originally presented in~\citep{Ma17} and~\citep{Boy21} with
discontinuous initial conditions and then modified to be smooth in~\citep{Chi23_short}
in order to circumvent the need for discontinuity-capturing techniques.
The initial condition is given by
\begin{eqnarray}
v & = & 600\textrm{ m/s},\nonumber \\
Y_{n\text{-}\mathrm{C_{12}H_{26}}} & = & \frac{1}{2}\left[1-\tanh\left(25|x|-5\right)\right],\nonumber \\
Y_{\mathrm{N_{2}}} & = & 1-Y_{n\text{-}\mathrm{C_{12}H_{26}}},\label{eq:thermal-bubble}\\
T & = & \frac{T_{\min}+T_{\max}}{2}+\frac{T_{\max}-T_{\min}}{2}\tanh\left(25|x|-5\right)\textrm{ K},\nonumber \\
P & = & 6\textrm{ MPa},\nonumber 
\end{eqnarray}
where $T_{\min}=363\text{ K}$ and $T_{\max}=900\text{ K}$. The computational
domain is $\left[-0.5,0.5\right]\text{ m}$, partitioned into $25$
line cells (such that $h=0.04\;\mathrm{m}$), and $p=3$ solutions
are obtained. Both sides of the domain are periodic. Note that the
assumption of thermally perfect gases may not be physically accurate
at these high-pressure conditions; a real-fluid model (e.g., a cubic
equation of state~\citep{Ma17,Boy21}) would be more appropriate.
Nevertheless, this study concerns numerical accuracy, robustness,
and pressure-equilibrium preservation, which we have found this case
to be well-suited to assess~\citep{Chi23_short}. 

As discussed in Remark~\ref{rem:fully-discrete-conservation-time-integration},
the use of explicit SSPRK schemes prevents fully discrete conservation
of total energy. However, the energy-conservation error should converge
at the expected rate associated with the time-integration scheme.
We perform a temporal convergence study for the pressure-based schemes
(P1, P2, and P3). The percent error in total-energy conservation is
computed after 100 advection periods (i.e., $t=100\tau$, where $\tau$
is the time to advect the solution one period) as
\[
\left|\frac{\int_{\Omega}\left.\rho e_{t}\right|_{t=100\tau}dx-\int_{\Omega}\left.\rho e_{t}\right|_{t=0}dx}{\int_{\Omega}\left.\rho e_{t}\right|_{t=0}dx}\right|\times100.
\]
Five time-step sizes are considered: $\Delta t$, $\Delta t/2$, $\Delta t/4$,
$\Delta t/8$, and $\Delta t/16$, where $\Delta t=3.14\text{ \ensuremath{\mu s}}$.
The results are given in Figure~\ref{fig:temporal_convergence},
where the dashed line represents the expected third-order convergence
rate. Without the correction term (P1), the energy-conservation error
does not converge with decreasing time-step size. The expected third-order
convergence rate is observed with the correction terms (P2 and P3)
combined with SSPRK3 time integration. 

\begin{figure}[tbph]
\begin{centering}
\includegraphics[width=0.48\columnwidth]{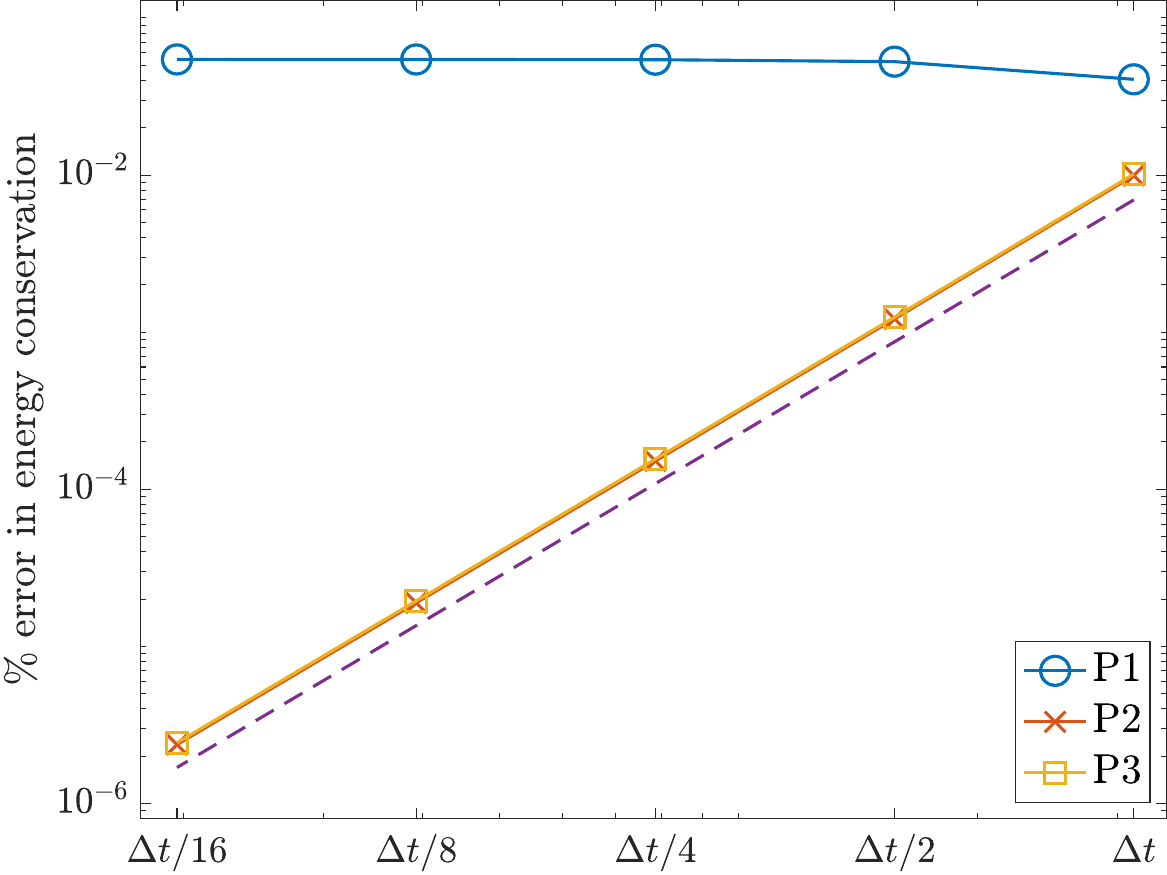}
\par\end{centering}
\caption{\label{fig:temporal_convergence}Convergence of total-energy conservation
with respect to time-step size, where $\Delta t=3.14\text{ \ensuremath{\mu s}}$,
for the one-dimensional, high-velocity thermal-bubble test case. The
percent error in global energy conservation after 100 advection periods
is computed. The dashed line represents a third-order rate of convergence.}
\end{figure}

Next, we assess the robustness and pressure-equilibrium errors of
all considered schemes. All solutions are integrated in time until
$t=100\tau$ (unless the solution diverges) with $\mathrm{CFL}=0.6$.
For comparison, we also employ the total-energy-based DG scheme~(\ref{eq:dg-mass-matrix-form-conservative})
with overintegration (E1) and colocated integration (E2). Figure~\ref{fig:thermal_bubble_dodecane_deltaP_600_m-s}
presents the temporal variation of the percent error in pressure.
The percent error in pressure is sampled every $\tau$ seconds until
the solution either diverges or is advected for 100 periods. The total-energy-based
DG scheme with colocated integration diverges rapidly. With overintegration,
the total-energy-based DG scheme remains stable but exhibits large
errors in pressure. In contrast, the pressure-based DG scheme with
the original correction term~(\ref{eq:correction-term-original})
(P2) yields significantly smaller (though not negligible) errors in
pressure. The pressure deviations associated with the uncorrected
pressure-based DG scheme (P1) and the proposed pressure-based DG scheme
(P3) are due to finite-precision-induced errors.

Figures~\ref{fig:thermal_bubble_dodecane_P_600_m-s} and~\ref{fig:thermal_bubble_dodecane_T_600_m-s}
display the pressure and temperature profiles, respectively, at $t=100\tau$
for the stable solutions. Large-scale oscillations are observed in
the overintegrated total-energy-based solution. These oscillations
are significantly reduced or eliminated in the pressure-based solutions,
which are all visually similar.

\begin{figure}[H]
\begin{centering}
\subfloat[\label{fig:thermal_bubble_dodecane_deltaP_600_m-s}Temporal variation
of error in pressure.]{\includegraphics[width=0.48\columnwidth]{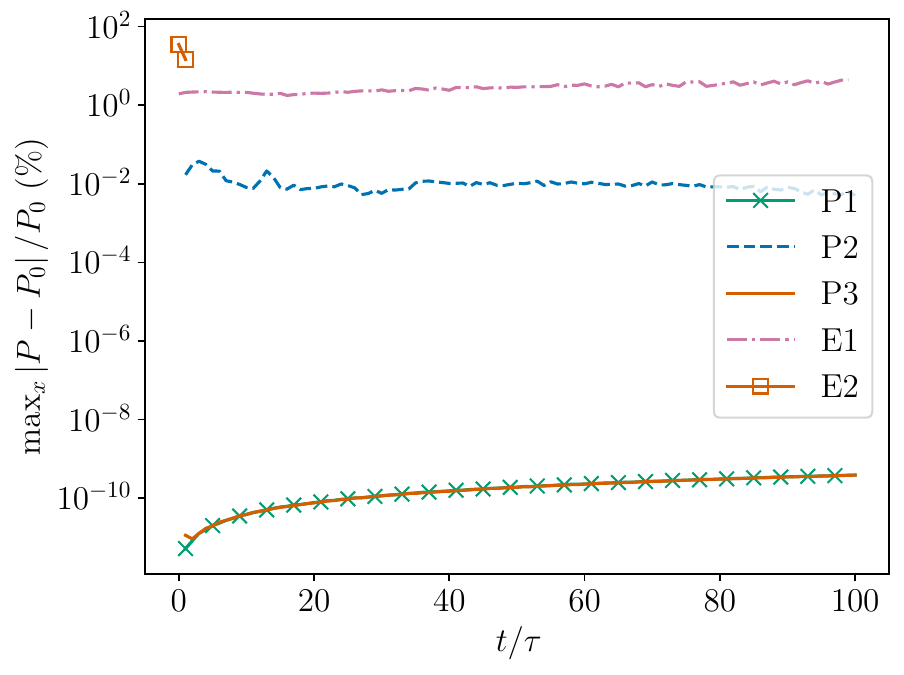}}\hfill{}\subfloat[\label{fig:thermal_bubble_dodecane_P_600_m-s}Pressure profiles at
$t=100\tau$.]{\includegraphics[width=0.48\columnwidth]{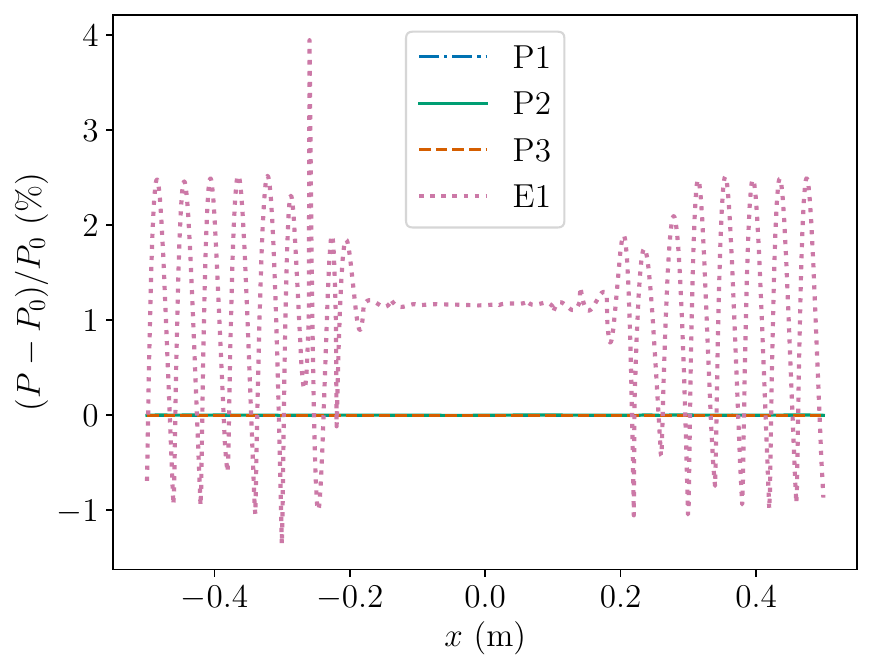}}\hfill{}\subfloat[\label{fig:thermal_bubble_dodecane_T_600_m-s}Temperature profiles
at $t=100\tau$.]{\includegraphics[width=0.48\columnwidth]{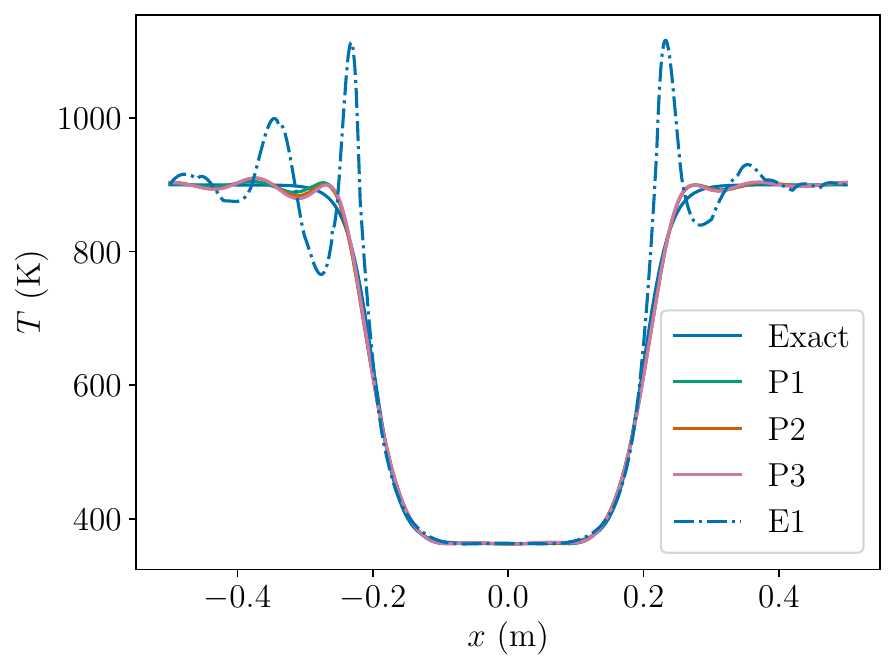}}
\par\end{centering}
\centering{}\caption{\label{fig:thermal_bubble_dodecane_600_m-s}$p=3$ solutions to the
advection of a nitrogen/n-dodecane thermal bubble at $v=600\text{ m/s}$.
P1: uncorrected pressure--based DG scheme; P2: pressure-based DG
scheme with original correction term~(\ref{eq:correction-term-original});
P3: proposed pressure-based DG scheme (Section~\ref{subsec:correction-term-modified});
E1: total-energy-based DG scheme with overintegration; E2: total-energy-based
DG scheme with colocated integration; Exact: exact solution.}
\end{figure}

Next, to demonstrate the failure to preserve zero species concentrations
of the original correction term, we include another species, O\textsubscript{2},
and set $Y_{\mathrm{O_{2}}}=0$ at $t=0$. Of course, $Y_{\mathrm{O_{2}}}$
should remain zero for the entire simulation. However, as shown in
Figure~\ref{fig:thermal_bubble_dodecane_600_m-s_Y_O2}, there is
spurious production and destruction of O\textsubscript{2} with the
P2 scheme. This can be particularly detrimental in the case of chemically
reacting flows since even trace concentrations of highly reactive
species can significantly affect the solution~\citep{Mag98,Cra19,Chi24_ozone}.
With the strategy discussed in Remark~\ref{rem:remove-zero-species-concentration},
the proposed (P3) scheme preserves zero $Y_{\mathrm{O_{2}}}$.

\begin{figure}[tbph]
\begin{centering}
\includegraphics[width=0.48\columnwidth]{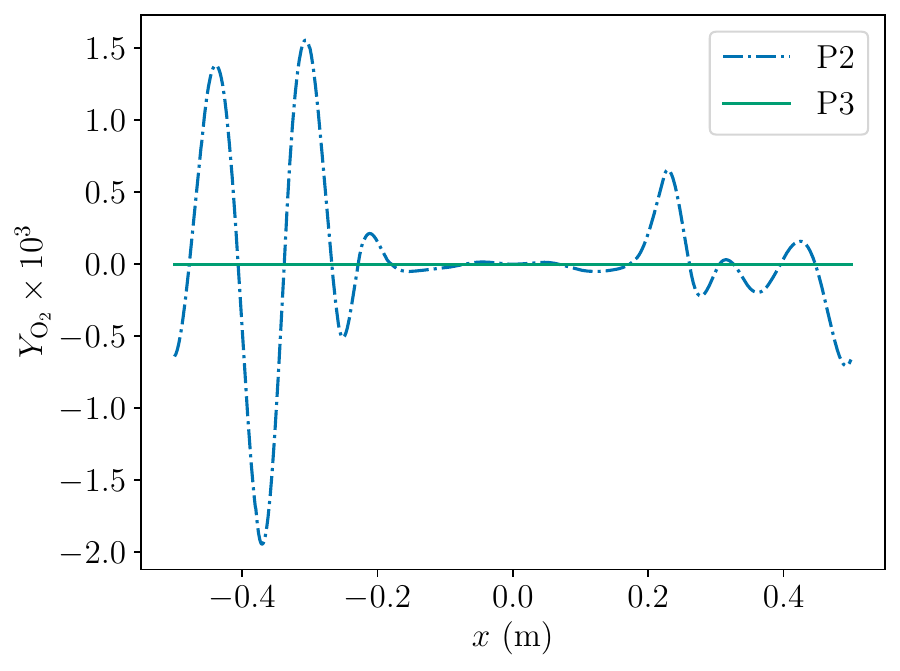}
\par\end{centering}
\caption{\label{fig:thermal_bubble_dodecane_600_m-s_Y_O2}$Y_{\mathrm{O_{2}}}$
profiles at $t=100\tau$ obtained from $p=3$ solutions to the advection
of a nitrogen/n-dodecane thermal bubble at $v=600\text{ m/s}$.}
\end{figure}

\subsection{One-dimensional, low-velocity thermal bubble}

\label{subsec:thermal-bubble-1-m-s-1D}

The initial condition here is identical to that for the previous test
case, except the velocity is set to $v=1\text{ m/s}$. In~\citep{Chi23_short},
noticeably different results (in terms of stability) were observed
between the low-velocity and high-velocity case when employing a total-energy-based
DG scheme with various integration strategies. The domain is partitioned
into 50 cells, and $p=2$ solutions are integrated in time for ten
advection periods (unless the solver crashes) with a CFL of 0.8. 

Figure~\ref{fig:thermal_bubble_dodecane_deltaP_1_m-s} presents the
temporal variation of the percent error in pressure for all considered
schemes, which is sampled every $\tau/10$ seconds (where $\tau$
is again the time to advect the solution one period) until the solution
either diverges or is advected for ten periods. Unlike in the high-velocity
case, the total-energy-based DG scheme with overintegration diverges
rapidly. With colocated integration, the total-energy-based DG scheme
remains stable but exhibits appreciable errors in pressure. Conversely,
all pressure-based DG schemes exhibit significantly smaller errors
in pressure. These errors are still present with the original correction
term~(\ref{eq:correction-term-original}) and completely eliminated
(apart from errors induced by finite precision) with the proposed
modifications. Figures~\ref{fig:thermal_bubble_dodecane_P_1_m-s}
and~\ref{fig:thermal_bubble_dodecane_T_1_m-s} display the pressure
and temperature profiles, respectively, at $t=10\tau$ for the stable
solutions. The colocated total-energy-based solution exhibits a nearly
uniform offset from the equilibrium pressure and large-scale temperature
oscillations. Significantly better agreement with the exact solution
is observed for the pressure-based solutions.

\begin{figure}[H]
\begin{centering}
\subfloat[\label{fig:thermal_bubble_dodecane_deltaP_1_m-s}Temporal variation
of error in pressure.]{\includegraphics[width=0.48\columnwidth]{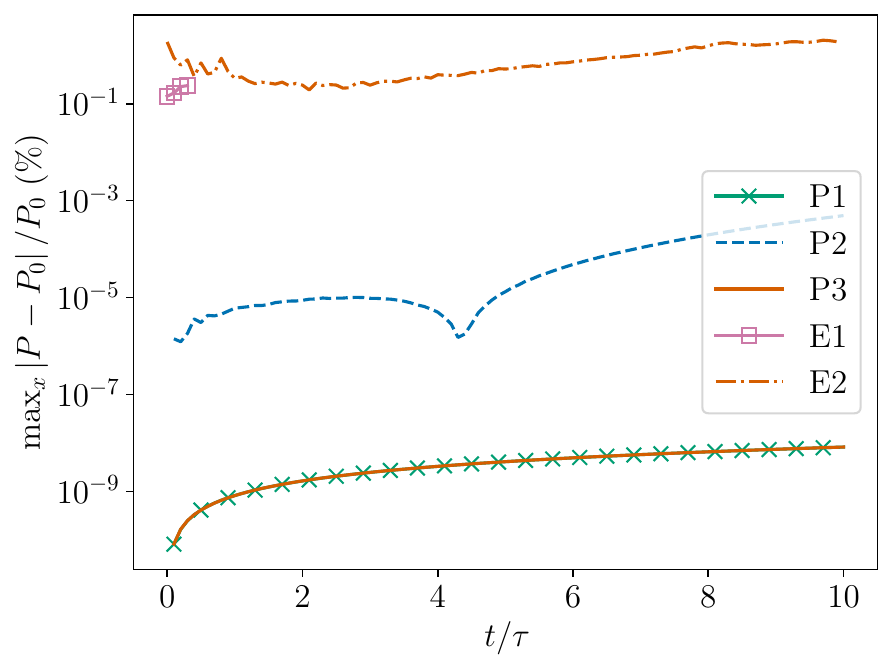}}\hfill{}\subfloat[\label{fig:thermal_bubble_dodecane_P_1_m-s}Pressure profiles at $t=10\tau$.]{\includegraphics[width=0.48\columnwidth]{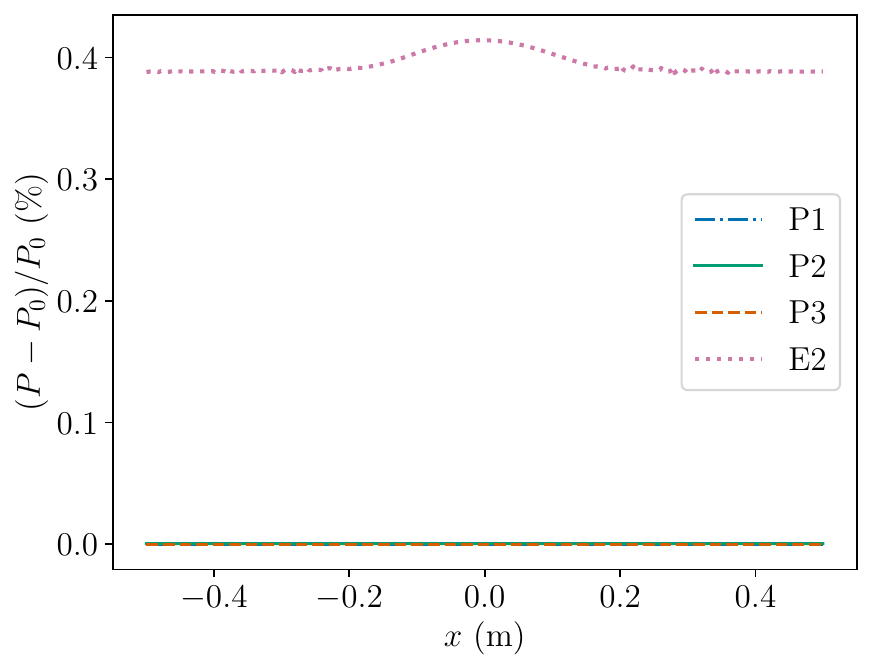}}\hfill{}\subfloat[\label{fig:thermal_bubble_dodecane_T_1_m-s}Temperature profiles at
$t=10\tau$.]{\includegraphics[width=0.48\columnwidth]{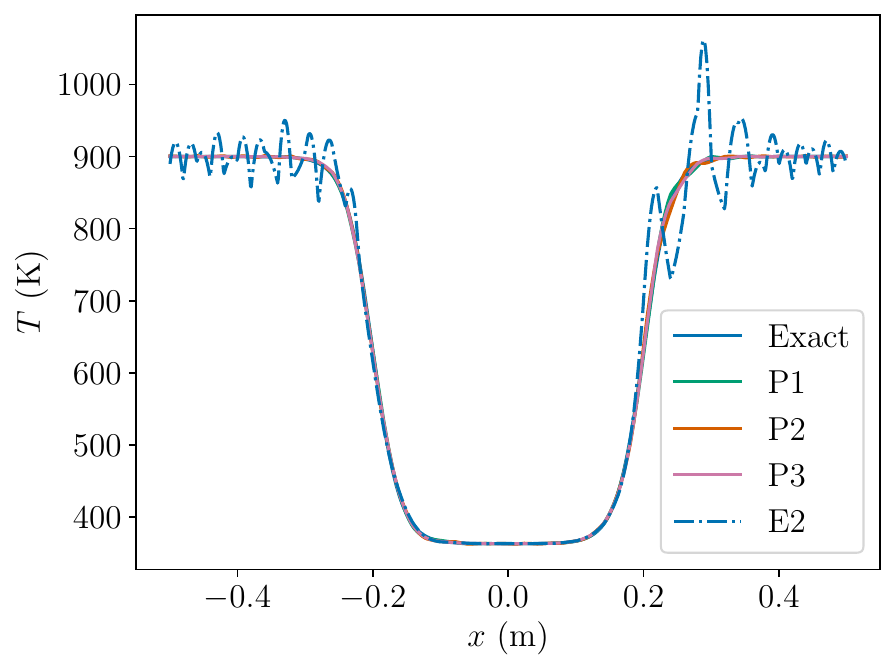}}
\par\end{centering}
\centering{}\caption{\label{fig:thermal_bubble_dodecane_1_m-s}$p=3$ solution to the advection
of a nitrogen/n-dodecane thermal bubble at $v=1\text{ m/s}$. P1:
uncorrected pressure--based DG scheme; P2: pressure-based DG scheme
with original correction term~(\ref{eq:correction-term-original});
P3: proposed pressure-based DG scheme (Section~\ref{subsec:correction-term-modified});
E1: total-energy-based DG scheme with overintegration; E2: total-energy-based
DG scheme with colocated integration; Exact: exact solution.}
\end{figure}

\subsection{Two-dimensional, high-velocity thermal bubble}

\label{subsec:thermal-bubble-600-m-s-2D}

This problem is a two-dimensional version of the high-velocity thermal-bubble
test case in Section~\ref{subsec:thermal-bubble-600-m-s-1D}. The
initial condition is given by
\begin{eqnarray}
Y_{n\text{-}\mathrm{C_{12}H_{26}}} & = & \frac{1}{2}\left[1-\tanh\left(25\sqrt{x_{1}^{2}+x_{2}^{2}}-5\right)\right],\nonumber \\
Y_{\mathrm{N_{2}}} & = & 1-Y_{n\text{-}\mathrm{C_{12}H_{26}}},\nonumber \\
T & = & \frac{T_{\min}+T_{\max}}{2}+\frac{T_{\max}-T_{\min}}{2}\tanh\left(25\sqrt{x_{1}^{2}+x_{2}^{2}}-5\right)\textrm{ K},\label{eq:thermal-bubble-2d}\\
P & = & 6\textrm{ MPa},\nonumber \\
\left(v_{1},v_{2}\right) & = & \left(600,0\right)\textrm{ m/s},\nonumber 
\end{eqnarray}
The computational domain is $\text{\ensuremath{\Omega}}=\left[-0.5,0.5\right]\mathrm{m}\times\left[-0.5,0.5\right]\mathrm{m}$.
The left, right, bottom, and top boundaries are periodic. Gmsh~\citep{Geu09}
is used to generate an unstructured triangular mesh with a characteristic
cell size of $h=0.04~\mathrm{m}$. Figure~\ref{fig:thermal_bubble_2d_initial}
presents the initial $p=3$ density and temperature fields, superimposed
by the mesh.
\begin{figure}[h]
\subfloat[\label{fig:thermal_bubble_2d_initial_density}Initial density field.]{\includegraphics[width=0.48\columnwidth]{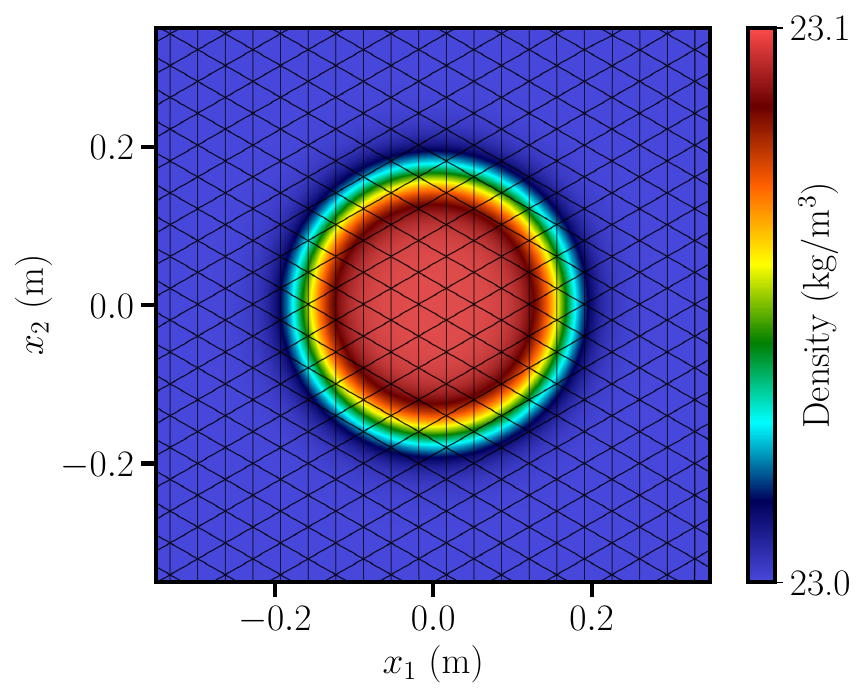}}\hfill{}\subfloat[\label{fig:thermal_bubble_2d_initial_temperature}Initial temperature
field.]{\includegraphics[width=0.48\columnwidth]{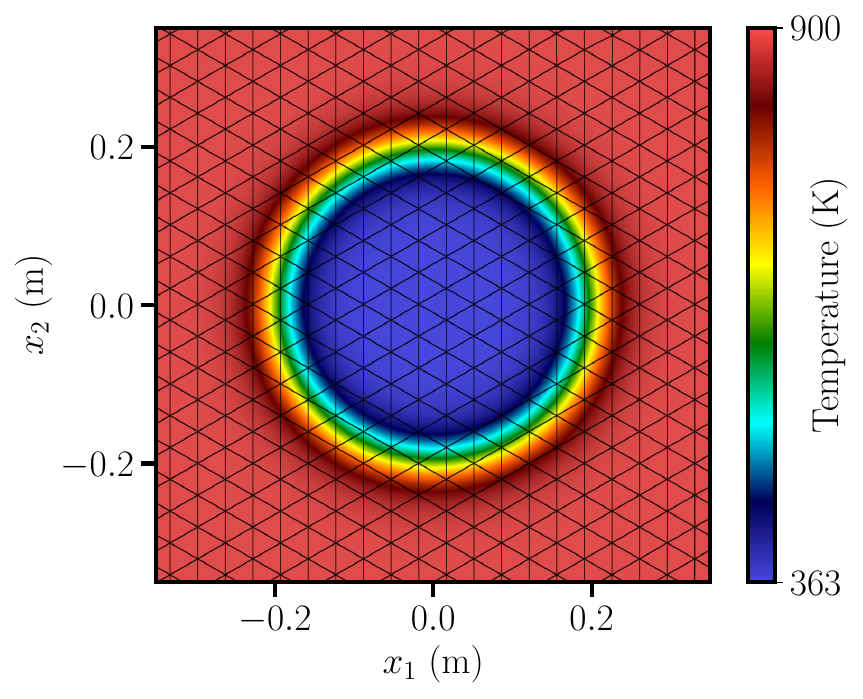}}

\caption{\label{fig:thermal_bubble_2d_initial}Initial $p=3$ density and pressure
fields, superimposed by the grid, for the two-dimensional advection
of a nitrogen/n-dodecane thermal bubble.}
\end{figure}

We first assess the temporal convergence of the pressure-based schemes
in this two-dimensional setting. Four time-step sizes are considered:
$\Delta t/2$, $\Delta t/4$, $\Delta t/8$, and $\Delta t/16$, where
$\Delta t=3.14\text{ \ensuremath{\mu s}}$, as in Section~\ref{subsec:thermal-bubble-600-m-s-1D}.
Note that, unlike in the one-dimensional version, $\Delta t$ is too
large and results in solver divergence. The results are presented
in Figure~\ref{fig:temporal_convergence}, where the dashed line
represents the expected third-order convergence rate. The P2 solution
diverges before $t=100\tau$ and is therefore not included. The expected
third-order convergence rate is observed with the P3 scheme combined
with SSPRK3 time integration. The P1 scheme (no correction term) results
in significantly greater errors in energy conservation, which fail
to converge with decreasing time-step size.
\begin{figure}[tbph]
\begin{centering}
\includegraphics[width=0.48\columnwidth]{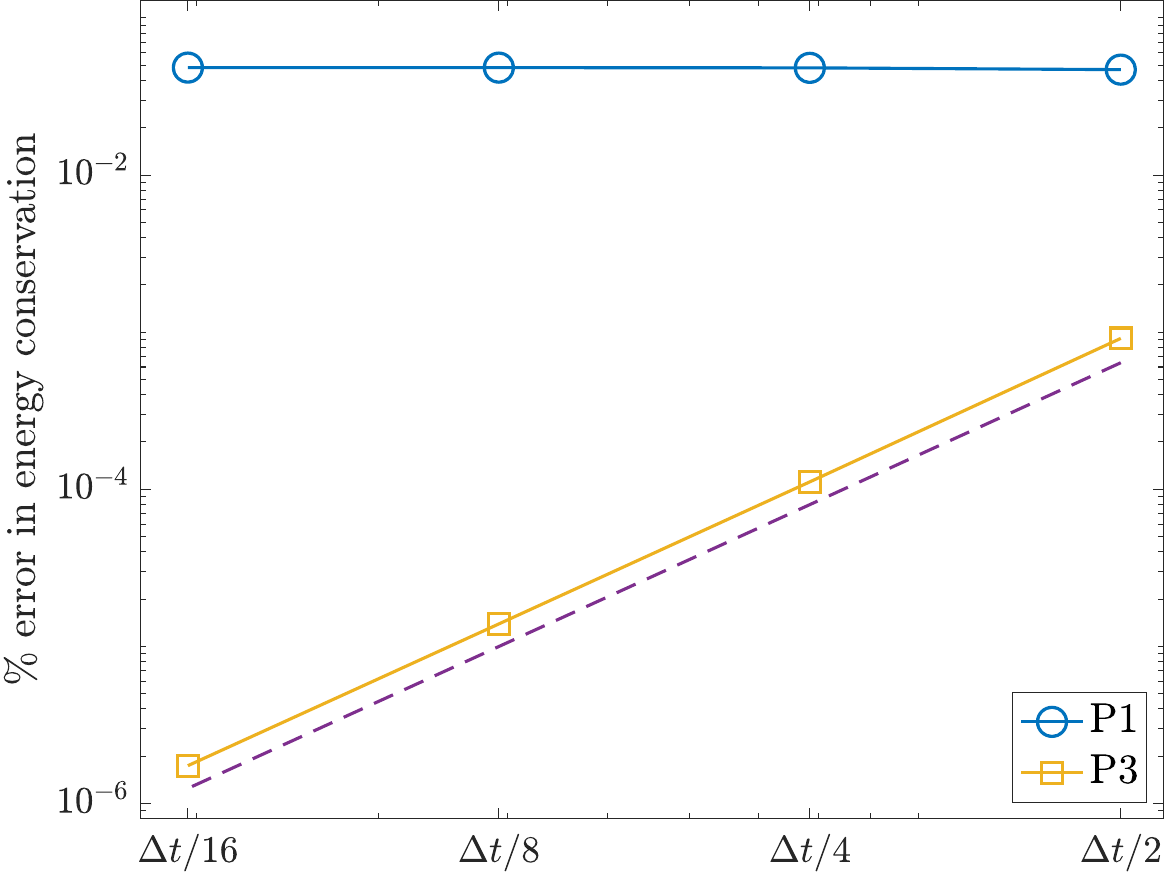}
\par\end{centering}
\caption{\label{fig:temporal_convergence-2d}Convergence of total-energy conservation
with respect to time-step size, where $\Delta t=3.14\text{ \ensuremath{\mu s}}$,
for the two-dimensional, high-velocity thermal-bubble test case. The
percent error in global energy conservation after 100 advection periods
is computed. The dashed line represents a third-order rate of convergence.}
\end{figure}

Figure~\ref{fig:thermal_bubble_dodecane_deltaP_600_m-s_2D} presents
the temporal variation of percent error in pressure, sampled every
$\tau$ seconds, for all considered schemes. Specifically, $p=3$
solutions are integrated in time with a CFL of 0.6 until either $t=100\tau$
or the solution diverges. The total-energy-based DG scheme with colocated
integration diverges rapidly. Unlike in the one-dimensional setting,
the total-energy-based DG scheme with colocated integration and the
pressure-based DG scheme with the original correction term~(\ref{eq:correction-term-original})
diverge before $t=100\tau$. The small pressure deviations observed
for the uncorrected pressure-based DG scheme and the proposed scheme
are due to finite-precision issues.
\begin{figure}[tbph]
\begin{centering}
\includegraphics[width=0.48\columnwidth]{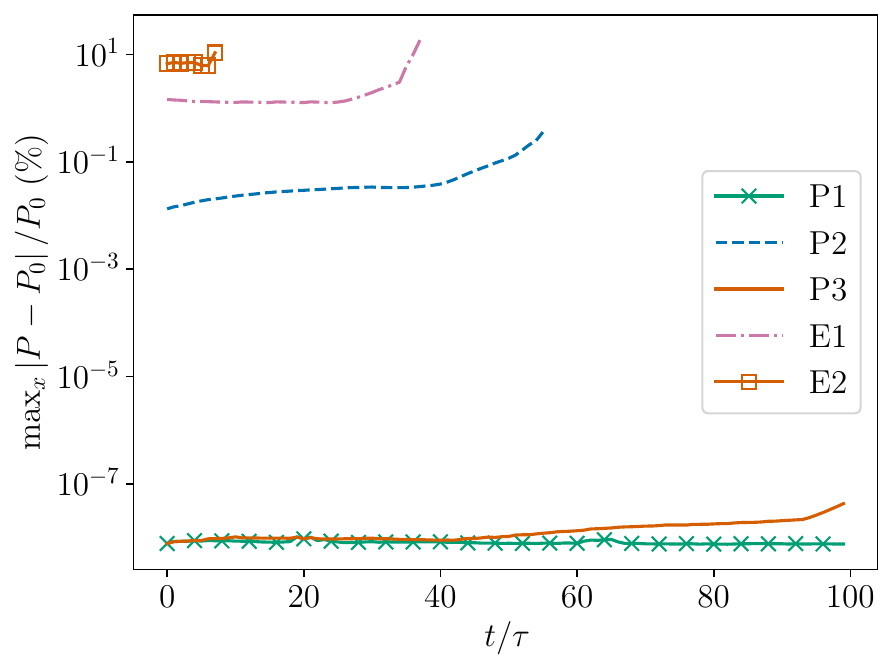}
\par\end{centering}
\caption{\label{fig:thermal_bubble_dodecane_deltaP_600_m-s_2D}Temporal variation
of error in pressure for $p=3$ solutions to the two-dimensional advection
of a nitrogen/n-dodecane thermal bubble. P1: uncorrected pressure--based
DG scheme; P2: pressure-based DG scheme with original correction term~(\ref{eq:correction-term-original});
P3: proposed pressure-based DG scheme (Section~\ref{subsec:correction-term-modified});
E1: total-energy-based DG scheme with overintegration; E2: total-energy-based
DG scheme with colocated integration; Exact: exact solution.}
\end{figure}

Figures~\ref{fig:thermal_bubble_2d_uncorrected} and~\ref{fig:thermal_bubble_2d_modified}
display the final density and temperature fields for the P1 and P3
schemes, respectively. The shape of the bubble is well-maintained
in both cases due to the absence of any pressure and velocity disturbances.

\begin{figure}[h]
\begin{centering}
\subfloat[\label{fig:thermal_bubble_2d_uncorrected_density}Density.]{\includegraphics[width=0.48\columnwidth]{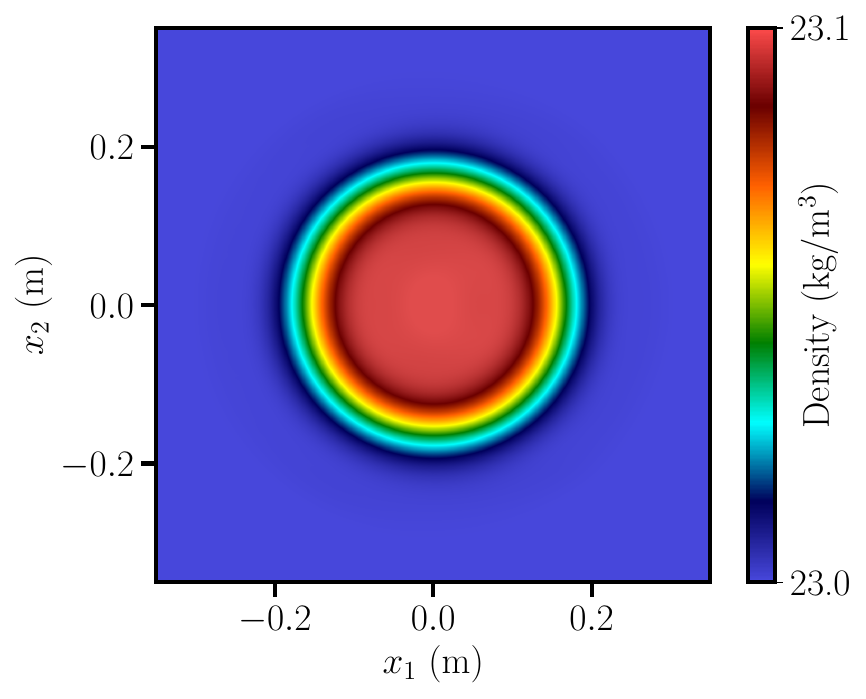}}\hfill{}\subfloat[\label{fig:thermal_bubble_2d_uncorrected_temperature}Temperature.]{\includegraphics[width=0.48\columnwidth]{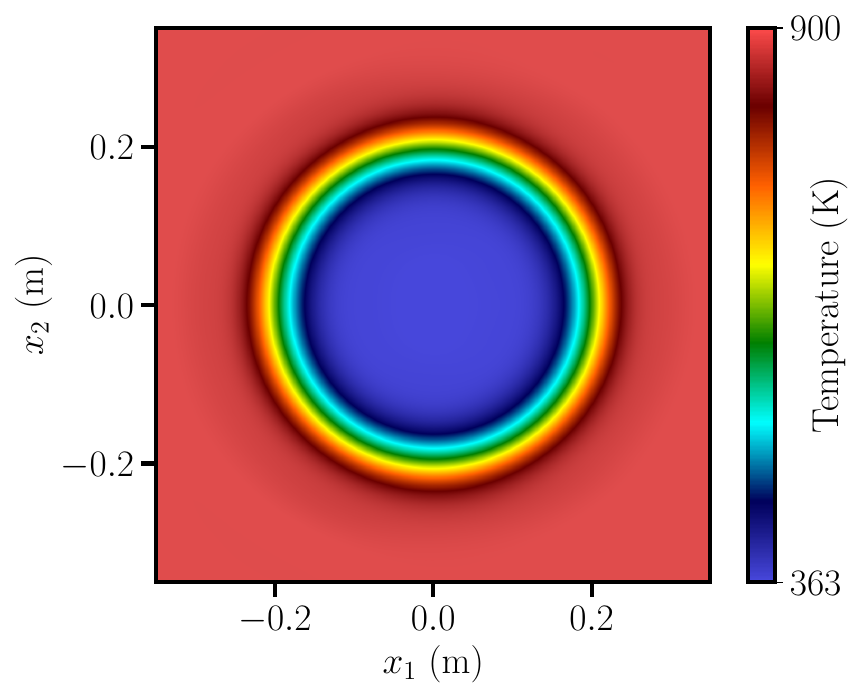}}
\par\end{centering}
\caption{\label{fig:thermal_bubble_2d_uncorrected}$p=3$ solution to two-dimensional
advection of a nitrogen/n-dodecane thermal bubble at $t=100\tau$
computed with the uncorrected DG scheme (P1). }
\end{figure}
\begin{figure}[h]
\begin{centering}
\subfloat[\label{fig:thermal_bubble_2d_modified_density}Density.]{\includegraphics[width=0.48\columnwidth]{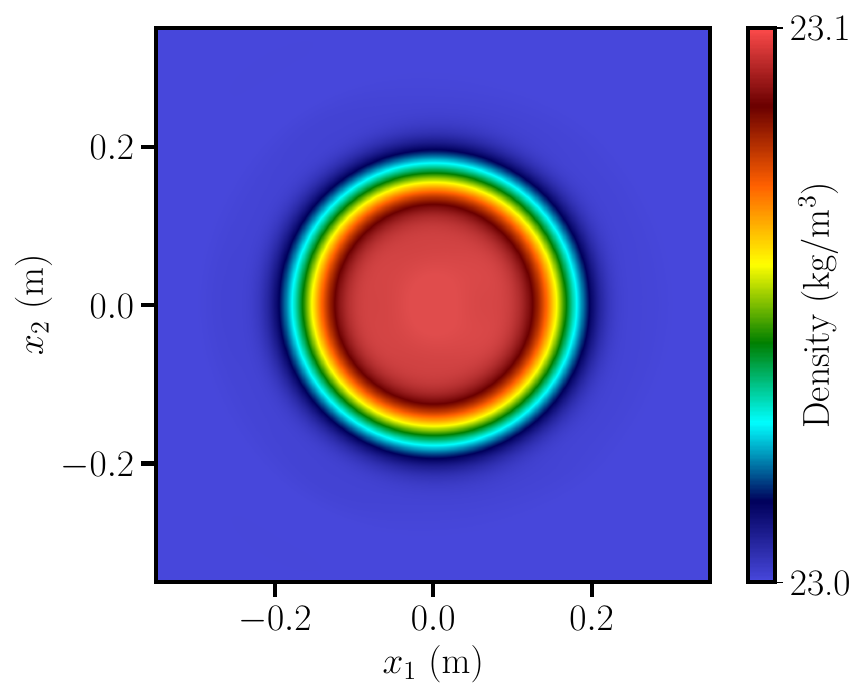}}\hfill{}\subfloat[\label{fig:thermal_bubble_2d_modified_temperature}Temperature.]{\includegraphics[width=0.48\columnwidth]{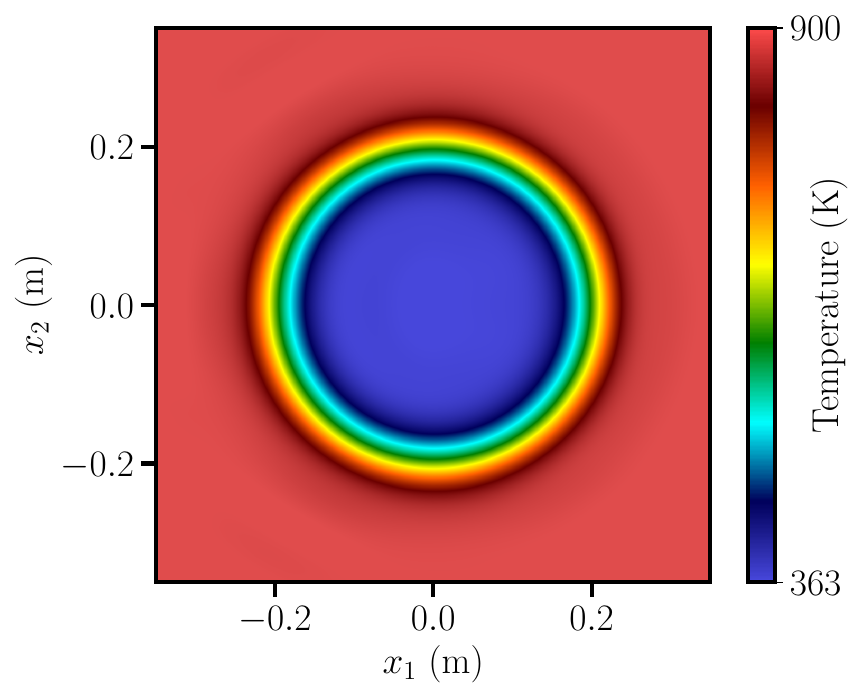}}
\par\end{centering}
\caption{\label{fig:thermal_bubble_2d_modified}$p=3$ solution to two-dimensional
advection of a nitrogen/n-dodecane thermal bubble at $t=100\tau$
computed with the proposed pressure-based DG scheme (P3). }
\end{figure}

\paragraph{Curved elements}

Finally, we recompute this case using curved elements of quadratic
order with a CFL of 0.4. To generate the curved mesh, high-order geometric
nodes are inserted at the midpoints of the vertices of each element.
At interior edges, the midpoint nodes are randomly perturbed by a
distance up to $0.05h$. Figure~\ref{fig:thermal_bubble_dodecane_deltaP_600_m-s_2D-curved}
presents the temporal variation of percent error in pressure for all
considered schemes. Again, only the P1 and P3 schemes remain stable
for 100 advection periods. Interestingly, the E2 scheme maintains
stability for longer times, and the P2 (original correction formulation)
solution diverges earlier than the E1 solution.
\begin{figure}[tbph]
\begin{centering}
\includegraphics[width=0.48\columnwidth]{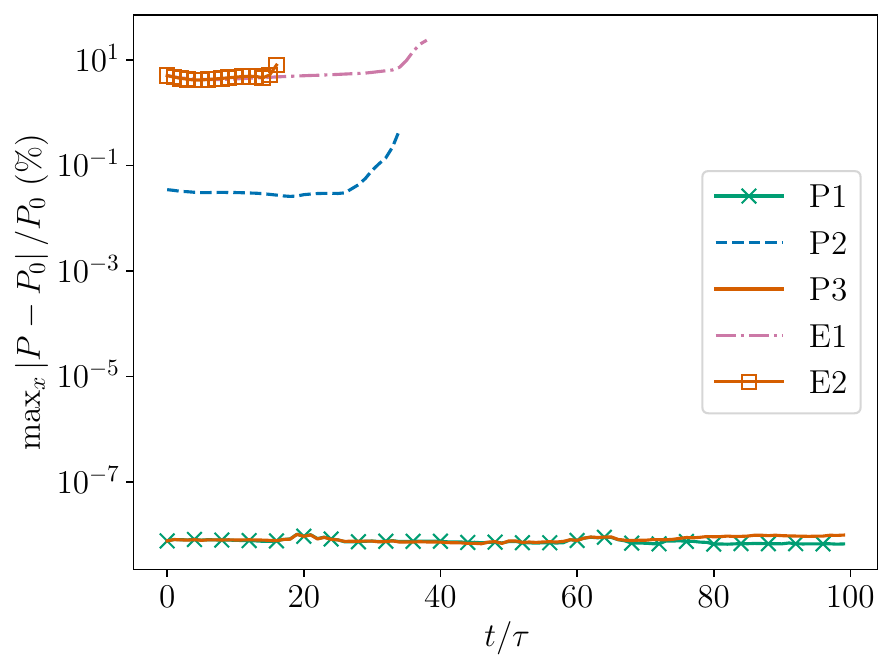}
\par\end{centering}
\caption{\label{fig:thermal_bubble_dodecane_deltaP_600_m-s_2D-curved}Temporal
variation of error in pressure for $p=3$ solutions to the two-dimensional
advection of a nitrogen/n-dodecane thermal bubble on a curved grid.
P1: uncorrected pressure--based DG scheme; P2: pressure-based DG
scheme with original correction term~(\ref{eq:correction-term-original});
P3: proposed pressure-based DG scheme (Section~\ref{subsec:correction-term-modified});
E1: total-energy-based DG scheme with overintegration; E2: total-energy-based
DG scheme with colocated integration; Exact: exact solution.}
\end{figure}
Figures~\ref{fig:thermal_bubble_2d_uncorrected-curved} and~\ref{fig:thermal_bubble_2d_modified-curved}
display the final density and temperature fields for the P1 and P3
schemes, respectively, superimposed by the curved grid. Just as with
the straight-sided grid, the shape of the bubble is well-maintained
in both cases due to the absence of any pressure and velocity disturbances. 

\begin{figure}[h]
\begin{centering}
\subfloat[\label{fig:thermal_bubble_2d_uncorrected_density-curved}Density.]{\includegraphics[width=0.48\columnwidth]{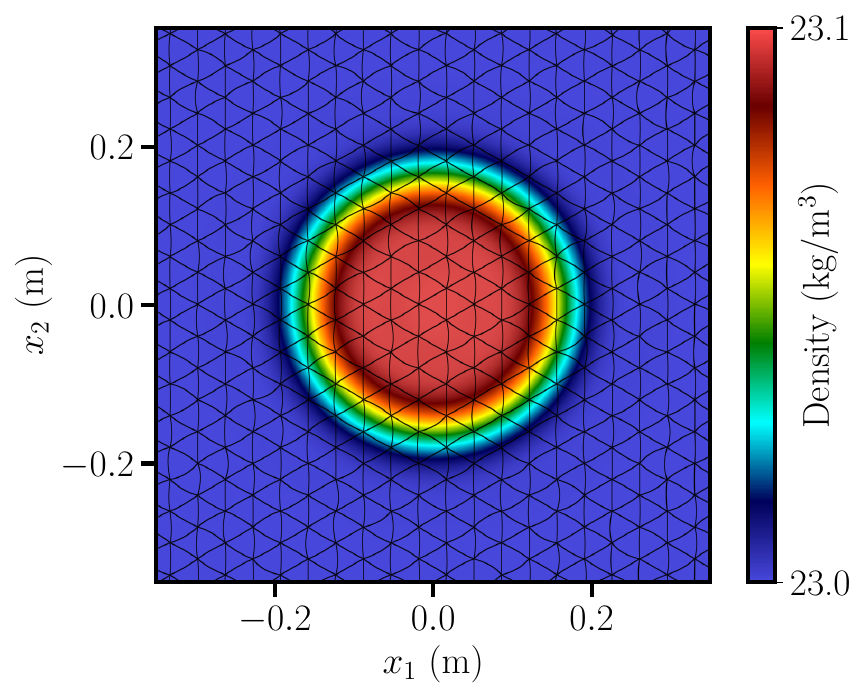}}\hfill{}\subfloat[\label{fig:thermal_bubble_2d_uncorrected_temperature-curved}Temperature.]{\includegraphics[width=0.48\columnwidth]{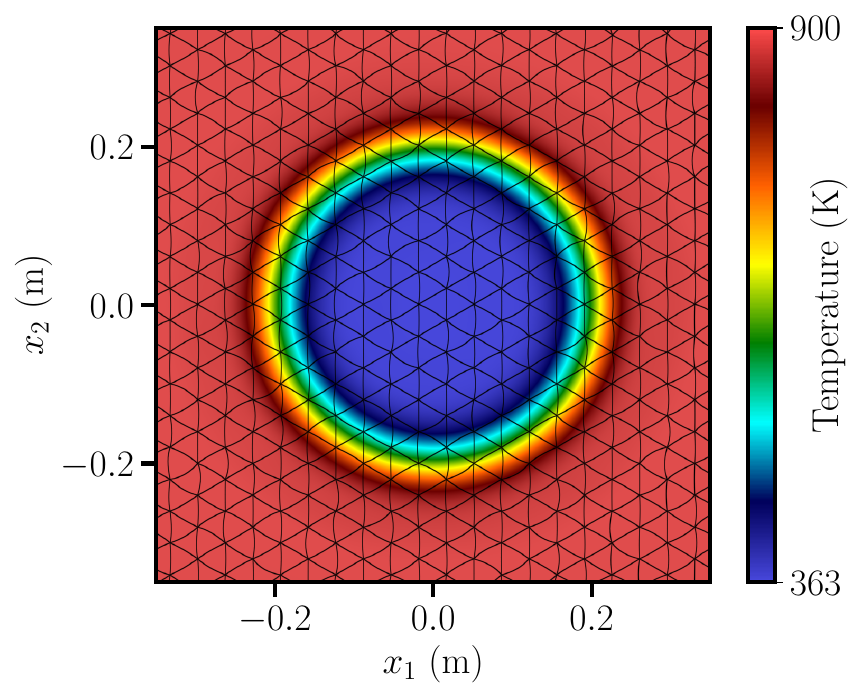}}
\par\end{centering}
\caption{\label{fig:thermal_bubble_2d_uncorrected-curved}$p=3$ solution to
two-dimensional advection of a nitrogen/n-dodecane thermal bubble
at $t=100\tau$ computed with the uncorrected DG scheme (P1) on a
curved grid.}
\end{figure}
\begin{figure}[h]
\begin{centering}
\subfloat[\label{fig:thermal_bubble_2d_modified_density-curved}Density.]{\includegraphics[width=0.48\columnwidth]{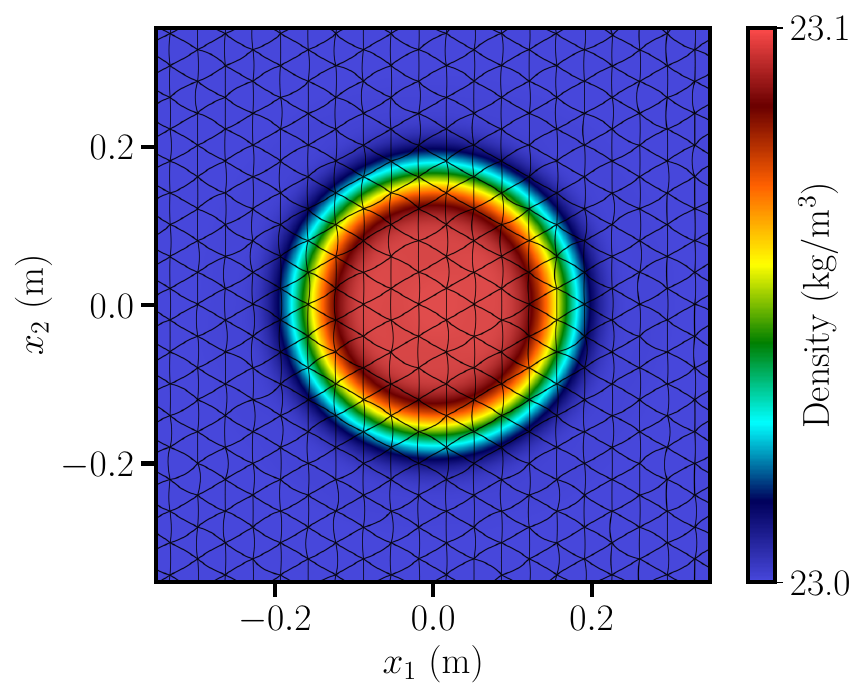}}\hfill{}\subfloat[\label{fig:thermal_bubble_2d_modified_temperature-curved}Temperature.]{\includegraphics[width=0.48\columnwidth]{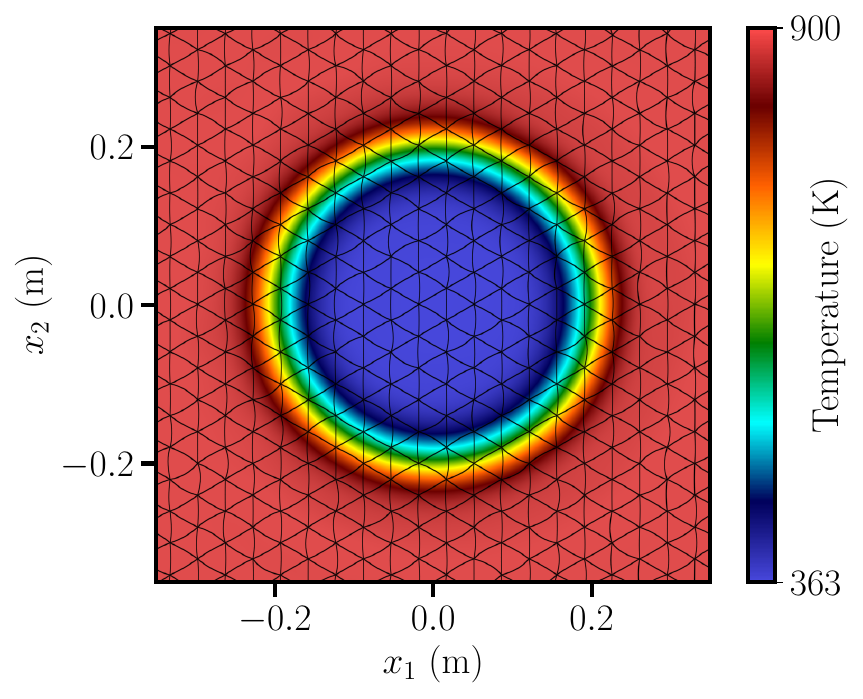}}
\par\end{centering}
\caption{\label{fig:thermal_bubble_2d_modified-curved}$p=3$ solution to two-dimensional
advection of a nitrogen/n-dodecane thermal bubble at $t=100\tau$
computed with the proposed pressure-based DG scheme (P3) on a curved
grid.}
\end{figure}

\subsection{Three-dimensional, high-velocity thermal bubble}

\label{subsec:thermal-bubble-600-m-s-3D}

Our final test case is a three-dimensional version of the high-velocity
thermal-bubble problem in Section~\ref{subsec:thermal-bubble-600-m-s-1D}.
The initial condition is given by
\begin{eqnarray}
Y_{n\text{-}\mathrm{C_{12}H_{26}}} & = & \frac{1}{2}\left[1-\tanh\left(25\sqrt{x_{1}^{2}+x_{2}^{2}+x_{3}^{2}}-5\right)\right],\nonumber \\
Y_{\mathrm{N_{2}}} & = & 1-Y_{n\text{-}\mathrm{C_{12}H_{26}}},\nonumber \\
T & = & \frac{T_{\min}+T_{\max}}{2}+\frac{T_{\max}-T_{\min}}{2}\tanh\left(25\sqrt{x_{1}^{2}+x_{2}^{2}+x_{3}^{2}}-5\right)\textrm{ K},\label{eq:thermal-bubble-2d-1}\\
P & = & 6\textrm{ MPa},\nonumber \\
\left(v_{1},v_{2},v_{3}\right) & = & \left(600,0,0\right)\textrm{ m/s},\nonumber 
\end{eqnarray}
The computational domain is $\text{\ensuremath{\Omega}}=\left[-0.5,0.5\right]\mathrm{m}\times\left[-0.5,0.5\right]\mathrm{m}\times\left[-0.5,0.5\right]\mathrm{m}$.
All boundaries are periodic. Gmsh~\citep{Geu09} is used to generate
an unstructured triangular mesh with a characteristic cell size of
$h=0.02~\mathrm{m}$. Figure~\ref{fig:thermal_bubble_3d_initial}
presents $T=890\;\mathrm{K}$ isosurfaces colored by density at $t=0$.

\begin{figure}[tbph]
\begin{centering}
\includegraphics[width=0.48\columnwidth]{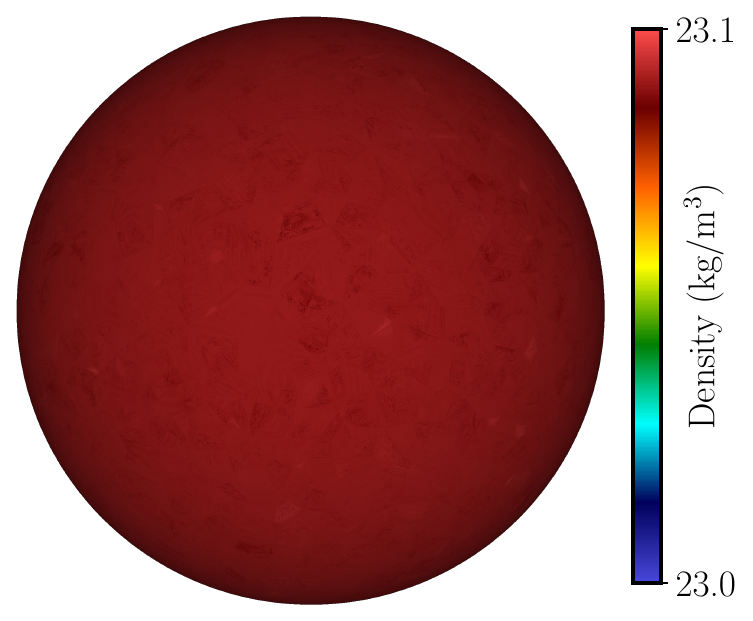}
\par\end{centering}
\caption{\label{fig:thermal_bubble_3d_initial}$T=890\;\mathrm{K}$ isosurfaces
colored by density at $t=0$ for the three-dimensional advection of
a nitrogen/n-dodecane thermal bubble.}
\end{figure}

Figure~\ref{fig:thermal_bubble_dodecane_deltaP_600_m-s_3D} presents
the temporal variation of percent error in pressure, sampled every
$\tau$ seconds, for all considered schemes. $p=3$ solutions are
integrated in time with a CFL of 0.6 until either $t=100\tau$ or
the solution diverges. As in the two-dimensional setting, only the
pressure-based DG schemes without any correction (P1) and with the
proposed correction formulation (P3) remain stable through $t=100\tau$.
The total-energy-based scheme with colocated integration (E2) diverges
the earliest, followed by the pressure-based DG scheme with the original
correction term (P2). The small pressure deviations observed for the
P1 and P3 schemes are due to finite-precision issues.
\begin{figure}[tbph]
\begin{centering}
\includegraphics[width=0.48\columnwidth]{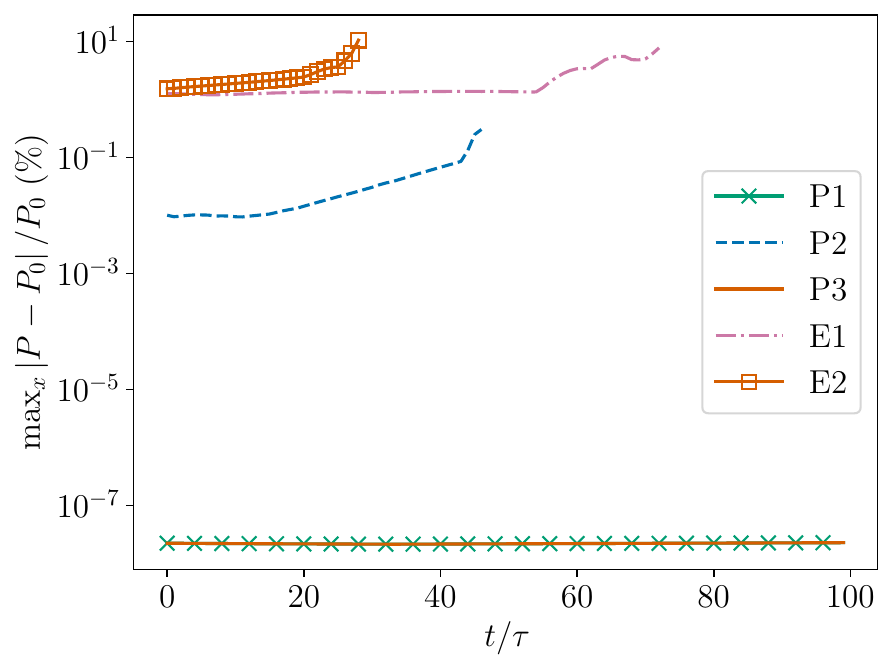}
\par\end{centering}
\caption{\label{fig:thermal_bubble_dodecane_deltaP_600_m-s_3D}Temporal variation
of error in pressure for $p=3$ solutions to the three-dimensional
advection of a nitrogen/n-dodecane thermal bubble. P1: uncorrected
pressure--based DG scheme; P2: pressure-based DG scheme with original
correction term~(\ref{eq:correction-term-original}); P3: proposed
pressure-based DG scheme (Section~\ref{subsec:correction-term-modified});
E1: total-energy-based DG scheme with overintegration; E2: total-energy-based
DG scheme with colocated integration; Exact: exact solution.}
\end{figure}

Figure~\ref{fig:thermal_bubble_3d_uncorrected_modified} the temperature
isosurfaces (colored by density) at $t=100\tau$ computed with the
P1 and P3 schemes, respectively. Although density variations are observed,
they remain marginal, and the shape of the bubble is well-maintained
in both cases.

\begin{figure}[h]
\begin{centering}
\subfloat[\label{fig:thermal_bubble_3d_uncorrected}Uncorrected scheme (P1).]{\includegraphics[width=0.48\columnwidth]{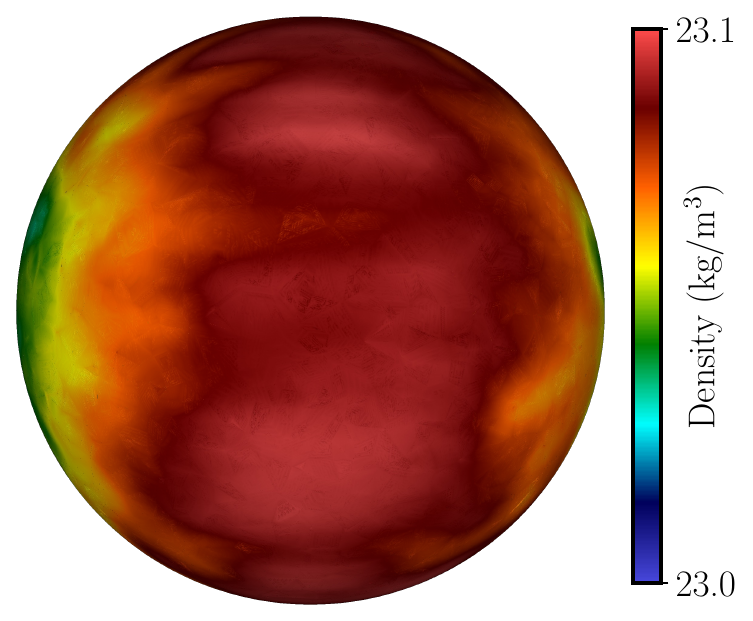}}\hfill{}\subfloat[\label{fig:thermal_bubble_3d_modified}Proposed scheme (P3).]{\includegraphics[width=0.48\columnwidth]{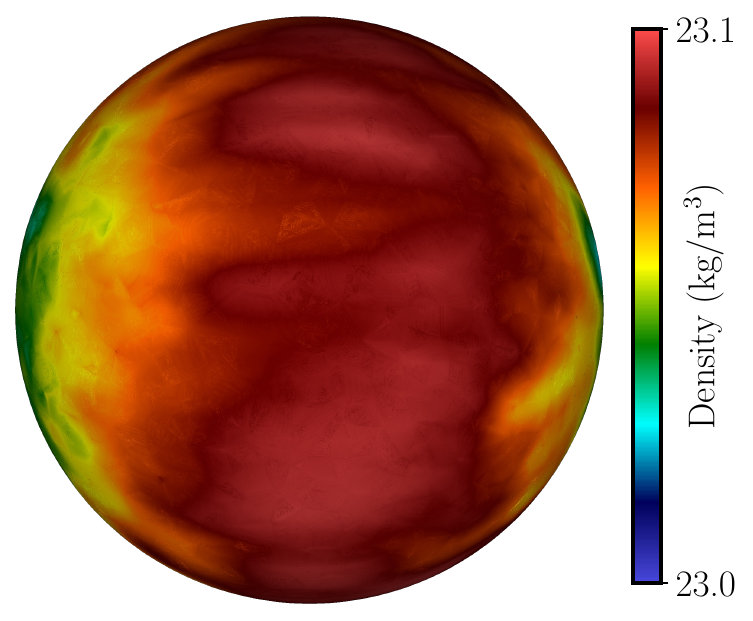}}
\par\end{centering}
\caption{\label{fig:thermal_bubble_3d_uncorrected_modified}$T=890\;\mathrm{K}$
isosurfaces colored by density at $t=100\tau$ for the three-dimensional
advection of a nitrogen/n-dodecane thermal bubble computed with the
P1 and P3 schemes.}
\end{figure}

\section{Concluding remarks}

In this work, we analyzed the velocity-equilibrium and pressure-equilibrium
conditions of a standard DG scheme that discretizes the conservative
form of the compressible, multicomponent Euler quations. It was shown
that under certain constraints on the numerical flux, the scheme is
velocity-equilibrium-preserving. However, standard DG schemes are
not pressure-equilibrium-preserving. Therefore, a pressure-based formulation
was adopted (i.e., the total-energy conservation equation is replaced
by pressure-evolution equation). In order to restore the (semi-discrete)
total-energy conservation that is otherwise lost, we incorporated
the conservative, elementwise correction terms of Abgrall~\citep{Abg18}
and Abgrall et al.~\citep{Abg22}. Unfortunately, the corrected pressure-based
DG scheme no longer exactly preserves pressure and velocity equilibria.
Furthermore, it does not preserve zero species concentrations. To
address these issues, we proposed simple modifications to the correction
term that then enable preservation of pressure equilibrium, velocity
equilibrium, and zero species concentrations, in addition to semidiscrete
conservation of total energy. Since the elementwise correction terms
are not valid if the state inside a given element is uniform, we also
introduced face-based corrections, inspired by~\citep{Abg23}, along
with a particular definition of the numerical total-energy flux, to
account for the case of elementwise-constant solutions with inter-element
jumps. We applied the scheme to compute smooth, interfacial flows
initially in velocity and pressure equilibria. In the first test case,
optimal convergence was demonstrated. However, the proposed modifications
may introduce additional errors that primarily manifest in $p=1$
solutions (although $p\geq2$ solutions seem largely unaffected).
The next two test cases entailed high-velocity and low-velocity advection
of a nitrogen/n-dodecane thermal bubble in one dimension. A total-energy-based
DG formulation with colocated integration failed to maintain solution
stability in the former, while a total-energy-based formulation with
overintegration failed to maintain stability in the latter. The pressure
oscillations were significantly reduced, but not eliminated, using
the pressure-based DG scheme with the original correction term. With
the proposed modifications, both pressure-equilibrium preservation
and semidiscrete conservation of total energy were achieved. In two-
and three-dimensional versions of the high-velocity nitrogen/n-dodecane
thermal-bubble problem, the pressure-based DG scheme with the original
correction term resulted in solver divergence, while the developed
scheme maintained solution stability due to exact preservation of
pressure equilibrium (apart from minor finite-precision-induced errors).

Note that apart from pressure-equilibrium preservation, there are
certain advantages to using a pressure-based formulation instead of
a total-energy-based formulation. For example, in the former, temperature
is straightforward to compute from the state; in contrast, an iterative
solver is typically required to obtain temperature from the state
in the latter. In addition, the occurrence of negative pressures is
a major issue in underresolved smooth regions and near discontinuities
that can lead to solver divergence; it is possible that negative pressures
are more likely to happen in a total-energy-based formulation since
pressure is a nonlinear function of the state, although this requires
further investigation. In certain cases, it may be easier to guarantee
positive pressures in a pressure-based formulation since pressure
is a state variable (though this is outside the scope of the current
study). At the same time, there are disadvantages to using a pressure-based
formulation as well. For instance, in the case of viscous flows, the
discretization of the additional terms may not be straightforward.
Furthermore, limiters that are conservative in total-energy-based
formulations do not necessarily conserve total energy in pressure-based
formulations. Entropy-based stabilization techniques that rely on
a total-energy-based formulation may also be difficult to apply.

There are many potential directions for future work. The current scheme
is designed for smooth flows initially in velocity and pressure equilibria;
we plan to extend it to flows with discontinuities by incorporating
appropriate stabilization mechanisms. The modified correction term
may not be appropriate in the case of non-uniform velocity and pressure,
in which case the original correction term is more suitable; therefore,
we may adopt a hybrid strategy that switches between the two correction
terms in such a way that guarantees semidiscrete conservation of total
energy and, where appropriate, preservation of pressure and velocity
equilibrium. Another hybrid approach is to switch to a total-energy
formulation when pressure-equilibrium preservation is not applicable,
although this may introduce new complications; note that such an approach
would be distinct from other hybrid schemes in the literature that
combine total-energy-based and pressure-based formulations due to
semidiscrete satisfaction of total-energy conservation in the entire
domain. To handle discontinuities, we will explore stabilization techniques
that do not cause loss of these properties. Furthermore, we will
conduct a detailed investigation of the importance of conserving total
energy in regions away from shocks, particularly in large-scale, practical
flow problems. An additional matter we aim to address is the importance
of fully discrete (as opposed to semidiscrete) conservation of total
energy, which would require the use of specific time integrators.
Finally, we will consider viscous effects, chemical reactions, and
real-fluid mixtures, which are much more susceptible to spurious pressure
oscillations and other nonlinear instabilities, and perform in-depth
comparisons with state-of-the-art total-energy-based formulations.

\section*{Acknowledgments}

This work is sponsored by the Office of Naval Research through the
Naval Research Laboratory 6.1 Computational Physics Task Area. 

\bibliographystyle{elsarticle-num}
\bibliography{../../../JCP_submission/citations}

\appendix

\section{Compressible vortex transport: Grid convergence}

\label{sec:appendix-compressible-vortex-transport}

In this section, we study convergence under grid refinement of the
DG discretization~(\ref{eq:DG-discretization-nonconservative}) in
order to test our implementation of the nonconservative terms, especially
since the cases in Section~\ref{sec:results} involve flows initially
in pressure and velocity equilibria, such that the nonconservative
terms vanish (assuming pressure equilibrium is discretely preserved).
We consider vortex advection based on the configuration presented
in~\citep{Lv20}. The initial condition is written in nondimensional
form as
\begin{eqnarray*}
v_{1} & = & v_{\infty}-\frac{\sigma}{2\pi}\left(x_{2}-x_{c,2}\right)\exp\left(1-r^{2}\right),\\
v_{2} & = & v_{\infty}+\frac{\sigma}{2\pi}\left(x_{1}-x_{c,1}\right)\exp\left(1-r^{2}\right),\\
\rho & = & \left[1-\frac{\sigma^{2}\left(\gamma-1\right)}{16\pi^{2}\gamma}\exp\left(2-2r^{2}\right)\right]^{\frac{1}{\gamma-1}},\\
P & = & \rho^{\gamma},
\end{eqnarray*}
where $v_{\infty}=10$, $r=\sqrt{\left(x_{1}-x_{c,1}\right)^{2}+\left(x_{2}-x_{c,2}\right)^{2}}$,
$\sigma=4$ is the vortex strength, and $\gamma=1.4$. The computational
domain is $\text{\ensuremath{\Omega}}=\left[0,L\right]^{2}$, where
$L=10$, and the vortex center is $\left(x_{c,1},x_{c,2}\right)=\left(L/2,L/2\right).$
All boundaries are periodic, and all simulations are run for one advection
period with a CFL of 0.1, as defined in Equation~(\ref{fig:thermal_bubble_3d_initial}).
Figure~\ref{fig:vortex_convergence} presents the $L^{2}$ errors
of the state variables (normalized as discussed in Section~\ref{subsec:gaussian-density-wave})
for $p=1$ to $p=3$ and four grid sizes, where the coarsest grid
size is $h=1$. The dashed lines represent convergence rates of $p+1$.
Optimal convergence is observed.
\begin{figure}[tbph]
\begin{centering}
\includegraphics[width=0.48\columnwidth]{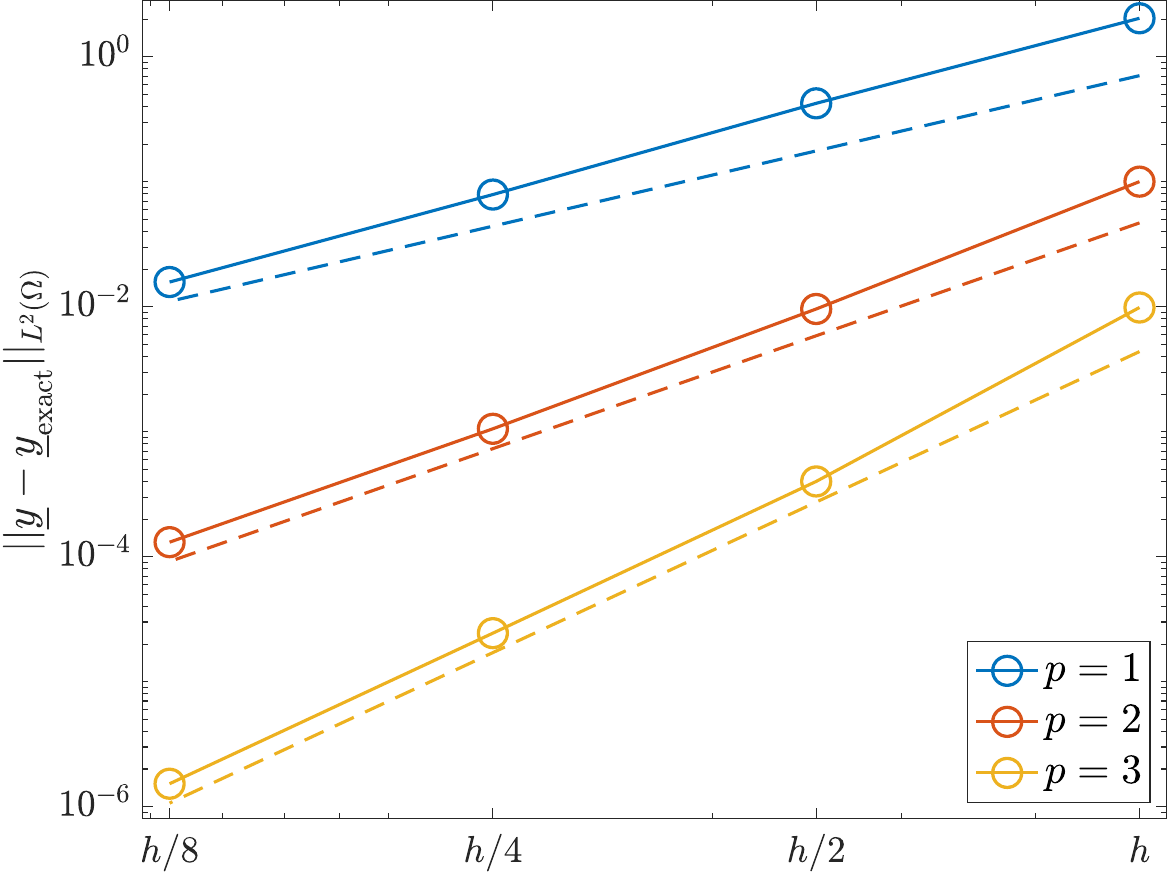}
\par\end{centering}
\caption{\label{fig:vortex_convergence}Convergence under grid refinement,
with $h=1$, for the vortex-transport problem. The $L^{2}$ error
of the normalized state with respect to the exact solution after one
advection period is computed. The dashed lines represent convergence
rates of $p+1$.}
\end{figure}

\section{Derivative of total energy with respect to the state}

\label{sec:appendix-derivative-total-energy}

This section provides a derivation of $\bm{w}=\partial_{\bm{y}}\left(\rho e_{t}\right)$
in Equation~(\ref{eq:total-energy-derivative}). The perturbation
of total energy, $\rho e_{t}=\rho u+\frac{1}{2}\rho\sum_{k=1}^{d}v_{k}v_{k}$,
is given by
\begin{equation}
\delta\rho e_{t}=\left(\frac{\partial\rho u}{\partial T}\right)_{C_{i}}\delta T+\sum_{i=1}^{n_{s}}\left(\frac{\partial\rho u}{\partial C_{i}}\right)_{T,C_{j\neq i}}\delta C_{i}+\sum_{k=1}^{d}v_{k}\delta\left(\rho v_{k}\right)-\frac{1}{2}\sum_{k=1}^{d}v_{k}v_{k}\delta\rho.\label{eq:total-energy-perturbation}
\end{equation}
With $T=T\left(C_{1},\ldots,C_{n_{s}},P\right)$, the temperature
perturbation can be written as
\begin{equation}
\delta T=\left(\frac{\partial T}{\partial P}\right)_{C_{i}}\delta P+\sum_{i=1}^{n_{s}}\left(\frac{\partial T}{\partial C_{i}}\right)_{P,C_{i\neq i}}\delta C_{i}\label{eq:temperature-perturbation}
\end{equation}
Substituting Equation~(\ref{eq:temperature-perturbation}) into Equation~(\ref{eq:total-energy-perturbation})
yields
\begin{align*}
\delta\rho e_{t}= & \left(\frac{\partial\rho u}{\partial T}\right)_{C_{i}}\left(\frac{\partial T}{\partial P}\right)_{C_{i}}\delta P+\left(\frac{\partial\rho u}{\partial T}\right)_{C_{i}}\sum_{i=1}^{n_{s}}\left(\frac{\partial T}{\partial C_{i}}\right)_{P,C_{i\neq i}}\delta C_{i}+\sum_{i=1}^{n_{s}}\left(\frac{\partial\rho u}{\partial C_{i}}\right)_{T,C_{j\neq i}}\delta C_{i}\\
 & +\sum_{k=1}^{d}v_{k}\delta\left(\rho v_{k}\right)-\frac{1}{2}\sum_{k=1}^{d}v_{k}v_{k}\sum_{i=1}^{n_{s}}W_{i}\delta C_{i}\\
= & \sum_{k=1}^{d}v_{k}\delta\left(\rho v_{k}\right)+\left(\frac{\partial\rho u}{\partial T}\right)_{C_{i}}\left(\frac{\partial T}{\partial P}\right)_{C_{i}}\delta P+\sum_{i=1}^{n_{s}}\left[\left(\frac{\partial\rho u}{\partial T}\right)_{C_{i}}\left(\frac{\partial T}{\partial C_{i}}\right)_{P,C_{i\neq i}}+\left(\frac{\partial\rho u}{\partial C_{i}}\right)_{T,C_{j\neq i}}-\frac{W_{i}}{2}\sum_{k=1}^{d}v_{k}v_{k}\right]\delta C_{i},
\end{align*}
which gives

\begin{align*}
\bm{w} & =\frac{\partial\left(\rho e_{t}\right)}{\partial\bm{y}}\\
 & =\left(\begin{array}{ccc}
\frac{\partial\left(\rho e_{t}\right)}{\partial\rho v_{k}}, & \frac{\partial\left(\rho e_{t}\right)}{\partial P}, & \frac{\partial\left(\rho e_{t}\right)}{\partial C_{i}}\end{array}\right)^{T}\\
 & =\left(\begin{array}{ccc}
v_{k}, & \left(\frac{\partial\rho u}{\partial T}\right)_{C_{i}}\left(\frac{\partial T}{\partial P}\right)_{C_{i}}, & \left(\frac{\partial\rho u}{\partial T}\right)_{C_{i}}\left(\frac{\partial T}{\partial C_{i}}\right)_{P,C_{i\neq i}}+\left(\frac{\partial\rho u}{\partial C_{i}}\right)_{T,C_{j\neq i}}-\frac{W_{i}}{2}\sum_{k=1}^{d}v_{k}v_{k}\end{array}\right)^{T}.
\end{align*}
The equation of state~(\ref{eq:eos}) results in
\[
\left(\frac{\partial T}{\partial P}\right)_{C_{i}}=\frac{1}{R^{0}\sum_{i}C_{i}},\quad\left(\frac{\partial T}{\partial C_{i}}\right)_{P,C_{j\neq i}}=-\frac{P}{R^{0}\left(\sum_{i}C_{i}\right)^{2}}.
\]
Introducing the mass-specific heat capacity at constant volume of
the $i$th species, $c_{v,i}=c_{p,i}-R_{i}$, and $\rho c_{v}=\sum_{i=1}^{n_{s}}\rho_{i}c_{v,i}$,
we have
\[
\left(\frac{\partial\rho u}{\partial T}\right)_{C_{i}}=\rho c_{v},\quad\left(\frac{\partial\rho u}{\partial C_{i}}\right)_{T,C_{j\neq i}}=W_{i}u_{i}.
\]
$\bm{w}$ can then be expressed as
\[
\bm{w}=\left(\begin{array}{ccc}
v_{k}, & \frac{\rho c_{v}}{R^{0}\sum_{j}C_{j}}, & W_{i}u_{i}-\frac{\rho c_{v}P}{R^{0}\left(\sum_{j}C_{j}\right)^{2}}-\frac{W_{i}}{2}\bm{v}\cdot\bm{v}\end{array}\right)^{T}.
\]

\end{document}